\newcommand{\Z}{{\mathbb Z}} 
\newcommand{\C}{{\mathbb C}} 
\renewcommand{\O}{{\mathcal O}} 
\newcommand{\CP}[1]{\mathbb{C}P^{#1}} 
\newcommand{\E}{{\mathcal E}} 
\newcommand{\Et}[1]{\widetilde{E}_{#1}} 
\newcommand{\F}{{\mathcal F}} 
\renewcommand{\t}{\theta} 
\newcommand{\Mod}{{\mathcal M}} 
\renewcommand{\d}{{\rm d}} 
\newcommand{\gl}{\mathfrak{gl}} 
\newcommand{\CPbar}{{\overline {{\mathbb {C}}P}}^2}
\newcommand{\pardeg}{\deg_{\vec{\alpha}}}
\DeclareMathOperator{\res}{res}
\DeclareMathOperator{\Gl}{GL}
\numberwithin{equation}{section}
\newtheorem{prop}{Proposition}[section]
\newtheorem{lem}[prop]{Lemma}
\newtheorem{cor}[prop]{Corollary}
\newtheorem{thm}[prop]{Theorem}
\newtheorem{assn}[prop]{Assumption}
{ \theoremstyle{definition}
\newtheorem{defn}[prop]{Definition}
\newtheorem{Example}[prop]{Example}
\newtheorem{Remark}[prop]{Remark}}
\begin{document}
\allowdisplaybreaks

\newcommand{\arXivNumber}{1808.10125}

\renewcommand{\PaperNumber}{085}

\FirstPageHeading

\ShortArticleName{Hitchin Fibrations with One Singular Fiber}

\ArticleName{Hitchin Fibrations on Two-Dimensional Moduli\\ Spaces of Irregular Higgs Bundles with One Singular\\ Fiber}

\Author{P\'eter IVANICS~$^\dag$, Andr\'as I.~STIPSICZ~$^\dag$ and Szil\'ard SZAB\'O~$^\ddag$}

\AuthorNameForHeading{P.~Ivanics, A.I.~Stipsicz and S.~Szab\'o}

\Address{$^\dag$~R\'enyi Institute of Mathematics, 1053 Budapest, Re\'altanoda utca 13-15, Hungary}
\EmailD{\href{mailto:ipe@math.bme.hu}{ipe@math.bme.hu}, \href{mailto:stipsicz.andras@renyi.hu}{stipsicz.andras@renyi.hu}}
\Address{$^\ddag$~Budapest University of Technology and Economics, \\
\hphantom{$^\ddag$}~1111 Budapest, Egry J\'ozsef utca~1, H \'ep\"ulet, Hungary}
\EmailD{\href{mailto:szabosz@math.bme.hu}{szabosz@math.bme.hu}}

\ArticleDates{Received February 02, 2019, in final form October 25, 2019; Published online November 04, 2019}

\Abstract{We analyze and completely describe the four cases when the Hitchin fibration on a $2$-dimensional moduli space of irregular Higgs bundles over $\CP1$ has a single singular fiber. The case when the fiber at infinity is of type $I_0^*$ is further analyzed, and we give constructions of all the possible configurations of singular curves in
elliptic fibrations having this type of singular fiber at infinity.}

\Keywords{irregular Higgs bundles; Hitchin fibration; elliptic fibrations}

\Classification{14J27; 14H40; 14H60}

\section{Introduction}\label{sec:intro}
Let $D$ be an effective divisor in $\CP1$ of total length $4$. Consider the (complex) 2-dimensional moduli spaces ${\overline{\Mod^{\vec{\alpha}-s}\big(\CP1, D, 2, d\big)}}$ of rank~$2$ {degree~$d$} parabolically stable Higgs bundles over~$\CP{1}$ with irregular singularities {of prescribed local form at $D$ and of $0$ parabolic degree}. When equipped with the Hitchin fibration, this space has been shown to be biregular to the complement of a singular fiber in an elliptic fibration on a rational elliptic surface (which complex surface is diffeomorphic to the 9-fold blow-up $\CP{2}\# _9 \CPbar$ of the complex projective plane), see \cite[Section~3.3]{ISS2} for the details of biregularity.

In the following we will describe all the cases
when the Hitchin fibration on the moduli space has a single singular
fiber. In these cases the fibration on the compactified space has
exactly two singular fibers; fibrations with exactly two singular
fibers have been extensively studied~\cite{TwoSing1, TwoSing3}~--
indeed, the genus-1 case (which is of central relevance in our
subsequent studies) admits a simple classification scheme given in
\cite[Theorem~3.2]{TwoSing3}. In \cite[Theorem~3.2]{TwoSing3} four
cases and an infinite family of genus-1 fibrations with exactly two
singular fibers have been encountered -- since by
\cite[Section~4]{Sz-spectral} all fibrations we will study arise from
pencils (hence admit a section), the infinite family will not arise in
our context.

We will distinguish five main cases, depending on the number of poles
of the Higgs field (constrained by our assumption having a complex
2-dimensional moduli space):
\begin{enumerate}\itemsep=0pt
\item[1)] there is one pole (which has to be of multiplicity 4),
\item[2)] there are two poles, each of multiplicity 2,
\item[3)] there are two poles, one with multiplicity 3 and the other with
multiplicity 1,
\item[4)] there are three poles, one with multiplicity 2 and the further two with multiplicity 1, and finally
\item[5)] there are four poles, each with multiplicity 1. 
\end{enumerate}
At each pole we need to distinguish further two cases, depending on
whether the leading-order term of the polar part of the Higgs field at
the pole is a regular semi-simple endomorphism (untwisted case), or
has non-vanishing nilpotent part (twisted case). {The third
 possible case, when the polar part is non-regular semi-simple, does
 not provide elliptic fibration, as shown by a simple
 calculation similar to \cite[Lemma~5.6]{ISS2}.}

The polar parts will depend on some complex parameters, and our {first} aim is
to determine those parameter values for which the Hitchin fibration on
the moduli space has a unique singular fiber. These parameter values
are complex numbers once we fix certain trivializations~-- for
details see Section~\ref{sec:prelims}.
The second aim of the paper is to provide explicit constructions for all the
possibilities of configurations of singular fibers in case~(5) of
the above list (four logarithmic poles).

The spaces considered in this paper have a rich geometry. In
 addition to the Dolbeault algebraic structure that we focus on, they
 admit a de Rham algebraic structure parameterizing certain
 connections with irregular singularities (related to the Dolbeault
 space via non-abelian Hodge theory), and a Betti algebraic structure
 parameterizing representations of the fundamental group of the
 punctured surface together with Stokes data (related to de Rham
 space via the irregular Riemann--Hilbert correspondence). This
 picture was first studied by Hitchin~\cite{Hit_selfduality} for
 regular objects, then by Simpson~\cite{Sim} in the tame case,
 subsequently extended to wild singularities by~Biquard
 and~Boalch~\cite{BB}. In a series of papers,~Boalch studied the
 moduli spaces of connections with irregular singularities on a curve
 and the corresponding wild character varieties. In particular,
 Boalch~\cite{Boalch1} (in the untwisted case) and~Boalch
 and~Yamakawa~\cite{BY} (in the twisted case) defined the notion of
 an irregular curve, and showed that for a fixed irregular curve,
 parabolic weights and orbits of the residues in appropriate sense,
 one gets a Betti moduli functor (first spelled out in~\cite{Boalch3}
 in the untwisted case) whose corresponding coarse moduli space
 exists, and is given by a quasi-Hamiltonian quotient
 construction~\cite{AMM}. In the particular cases under study here,
 the above results imply that the moduli spaces we are looking at
 admit the structure of a smooth hyperK\"ahler manifold of real
 dimension $4$. It is known~\cite{Boalch2, CB, KraftProcesi} that an open
 subset of each such space is in fact a quiver
 variety~\cite{KN}. More precisely, it is a fiber of the
 semi-universal deformation of a Kleinian surface singularity, with
 an action of the corresponding Weyl group on the base of the
 deformation family. Another equivalent way of labelling these spaces
 that appears in the literature is by the type of the central fiber
 of this family.

The results of this paper, together with those of the previous
 papers~\cite{ISS, ISS2} of the same \mbox{authors}, will be
 applied~\cite{Sz_PW} by the third author to prove the numerical
 version of the \mbox{$P=W$} conjecture by~de Cataldo, Hausel
 and~Migliorini~\cite{dCHM} and Simpson's geometric counterpart
 thereof~\cite{Sim_boundary} in the particular cases covered by our
 papers, namely those corresponding to the Painlev\'e systems. It
 turns out that these conjectures are easier to treat in the presence
 of a $\C^{\times}$-action. In the Painlev\'e cases (and more
 generally, if the moduli space is complex $2$-dimensional), the
 existence of such an action implies that the Hitchin fibration
 admits a unique singular fiber. Therefore, one may hope that finding
 moduli spaces with a unique singular Hitchin fiber may be of use in
 the generalization of the $P=W$ conjecture.

Throughout, we will be interested in moduli spaces $\Mod^{\vec{\alpha}-s} \big(\CP1, D, 2\big)$ of rank $2$ parabolic Higgs
bundles over $\CP1$ with higher order poles at some divisor $D$, with prescribed local form near the points of the
divisor. For details, see Section~\ref{ssec:param_geo}. The moduli space admits a natural decomposition
\begin{gather*}
 \Mod^{\vec{\alpha}-s} \big(\CP1, D, 2\big) = \coprod_{d\in\Z} \Mod^{\vec{\alpha}-s} \big(\CP1, D, 2, d\big)
\end{gather*}
according to the degree of the underlying vector bundle $\E$. In the cases where the data fixed to define $\Mod^{\vec{\alpha}-s} \big(\CP1, D, 2, d\big)$
contains a regular non-semisimple (i.e., non-closed) adjoint orbit at some logarithmic points $t\in D$, we will equally study the partial compactification
\begin{gather}\label{eq:compactification}
 \overline{\Mod^{\vec{\alpha}-s} \big(\CP1, D, 2, d\big)}
\end{gather}
parameterizing parabolic Higgs bundles with possibly non-regular residues at the points $t$, having the same characteristic polynomial.
This functor differs from $\Mod^{\vec{\alpha}-s} \big(\CP1, D, 2, d\big)$ in that instead of imposing the adjoint orbit of the residue of the Higgs field
at such points $t$, we merely impose that the residue lie in the \emph{closure} of the given non-closed orbit.
Then, \eqref{eq:compactification} is a stratified algebraic space: the
top-dimensional stratum is $\Mod^{\vec{\alpha}-s} \big(\CP1, D, 2, d\big)$, and
the lower-dimensional strata are moduli spaces of parabolic
logarithmic Higgs bundles with some regular and some non-regular
residues having the same eigenvalues as fixed for the definition of
$\Mod^{\vec{\alpha}-s} \big(\CP1, D, 2,d\big)$.

\subsection{The case of one pole}\label{ss:OnePole}
This case was treated in \cite{ISS}, where we gave a complete
classification of singular fibers of the Hitchin fibration.
In particular, in the untwisted case the fibration depends on
the complex parameters
\begin{gather}\label{eq:untwisted-parameters}
 a_{\pm}, b_{\pm}, c_{\pm}, \lambda_{\pm} \in \C ,\qquad a_+ \neq a_- ,\tag{{U}}
 \end{gather}
and the fiber at infinity is of type $\Et7$. {The $a_+ = a_-$ case does
not lead to an elliptic fibration, cf.~\cite[Lemma~5.6]{ISS2}.}
For the geometric meaning of these parameters, see Section~\ref{sssec:4};
the various singular fibers are defined in Section~\ref{sec:elliptic}.
Let
\begin{gather*} \Delta = \big((b_--b_+){}^2-4 (a_--a_+) (c_--c_+)\big){}^3-432 (a_--a_+){}^4 \lambda _+^2.\end{gather*}
With these notations we have
\begin{thm}[{\cite[Theorem~1.1]{ISS}}]\label{thm:OnePoleUntwisted}
In this case the Hitchin fibration on the moduli space $\Mod ^{\vec{\alpha}-s}{\big(\CP1, D, 2, d\big)}$ admits a single singular fiber if
and only if $\lambda _+=\lambda _-=0$ and $\Delta =0$. The single singular fiber is of type $III$.
\end{thm}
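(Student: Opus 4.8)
The plan is to realize the Hitchin fibration as the elliptic fibration carried by the family of spectral curves and to read the singular fibers off the discriminant of its Weierstrass form. Writing the characteristic equation of the Higgs field, after completing the square, as $\eta^2 = R_s(z)$ — where $z$ is the affine coordinate on $\CP1$, $s$ is the coordinate on the one-dimensional Hitchin base, and the polar part of the meromorphic function $R_s$ is fixed by the prescribed order-$4$ local form of the Higgs field — I would first bring this family into Weierstrass normal form $y^2 = x^3 + g_2(s)\,x + g_3(s)$. Here $g_2,g_3$ are polynomials in $s$ whose coefficients are explicit polynomial expressions in the parameters $a_{\pm},b_{\pm},c_{\pm},\lambda_{\pm}$ of \eqref{eq:untwisted-parameters}; producing this reduction is the computational heart of the argument, since every later step is phrased through it.

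The next step is Euler-number bookkeeping. The compactified total space is the rational elliptic surface $\CP2\#_9\CPbar$ recalled above, so the Euler numbers of all its singular fibers sum to $12$. Since the fiber over $s=\infty$ is of type $\Et7=III^*$ and contributes $9$, the fibers over the finite part of the base contribute exactly $3$. Using that for any Kodaira fiber the Euler number equals the order of vanishing of the discriminant, the polynomial $\delta(s)=4g_2(s)^3+27g_3(s)^2$ has finite zeros of total multiplicity $3$, hence is (up to a nonzero constant) a cubic in $s$. A single singular fiber therefore forces $\delta$ to have a single finite zero $s_0$ of multiplicity $3$, i.e.\ $\delta(s)=\kappa\,(s-s_0)^3$ for some $\kappa\neq 0$.

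I would then translate the triple-root condition into constraints on the parameters. Writing $\delta(s)=\sum_{i=0}^{3}\kappa_i s^i$, the requirement that $\delta$ be a perfect cube determines $s_0=-\kappa_2/(3\kappa_3)$ and imposes the two remaining coefficient relations $\kappa_1=\kappa_2^2/(3\kappa_3)$ and $\kappa_0=-\kappa_2^3/(27\kappa_3^2)$. The expectation, matching the shape of the stated $\Delta$, is that these relations force the $\lambda_{\pm}$-carrying terms to drop out, yielding $\lambda_+=\lambda_-=0$, after which the residual relation reduces, upon eliminating $s_0$, to $\big((b_--b_+)^2-4(a_--a_+)(c_--c_+)\big)^3=432(a_--a_+)^4\lambda_+^2$, that is to $\Delta=0$ (for $\lambda_+=0$ simply the vanishing of the inner quadratic discriminant). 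For the converse I would substitute $\lambda_{\pm}=0$ and $\Delta=0$ back into $\delta$ and verify directly that it becomes a nonzero constant times $(s-s_0)^3$.

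Finally I would pin down the Kodaira type of the fiber over $s_0$. Having excluded all other finite singular fibers, this fiber has Euler number $3$ and is thus either $I_3$ or $III$. To decide between them I would compute the triple $\big(\mathrm{ord}_{s_0}g_2,\ \mathrm{ord}_{s_0}g_3,\ \mathrm{ord}_{s_0}\delta\big)$: the pattern $(0,0,3)$, equivalently $j\to\infty$, is the multiplicative fiber $I_3$, whereas $(1,\geq 2,3)$, equivalently a finite $j\to 1728$, is the additive fiber $III$. Showing that the reduced family realizes $g_2(s_0)=0$, i.e.\ the latter pattern, completes the identification as type $III$. The main obstacle is the Weierstrass reduction and the ensuing analysis of $\delta(s)$: bringing the coefficients $\kappa_i$ into a form in which the separate roles of $\lambda_{\pm}$ and of $(b_--b_+)^2-4(a_--a_+)(c_--c_+)$ become transparent is precisely what makes the clean criterion $\lambda_+=\lambda_-=0,\ \Delta=0$ emerge, and it is where the bulk of the work lies.
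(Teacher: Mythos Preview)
The paper does not actually prove this statement: in Section~\ref{sec:proofs} it simply records that Theorems~\ref{thm:OnePoleUntwisted} and~\ref{thm:OnePoleTwisted} are special cases of \cite[Theorems~1.1 and~1.3]{ISS} and gives no further details. Your proposal therefore goes well beyond what the present paper supplies, and is in fact a reconstruction of the strategy carried out in~\cite{ISS}: compute the spectral pencil, pass to Weierstrass form, use the Euler-number constraint $12-e(\Et7)=3$ to see that the finite discriminant is cubic in the Hitchin parameter, impose a triple root, and read off the Kodaira type from the orders $(\operatorname{ord} g_2,\operatorname{ord} g_3,\operatorname{ord}\delta)$. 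That is exactly the route taken in the cited paper, so your plan is aligned with the actual proof, just displaced to the wrong reference.

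Two minor remarks on the sketch itself. First, your phrasing suggests that the triple-root condition will \emph{force} $\lambda_+=\lambda_-=0$ and then separately give $\Delta=0$; in the actual computation one finds that the constraint $\lambda_+ + \lambda_- = 0$ (coming from the trace/residue condition) is already built in, and the remaining independent relations from the triple root are precisely $\lambda_+=0$ together with the vanishing of the inner quadratic discriminant $(b_--b_+)^2-4(a_--a_+)(c_--c_+)$, which together amount to $\lambda_\pm=0$ and $\Delta=0$. Second, you correctly flag the Weierstrass reduction as the computational core; in~\cite{ISS} this is handled by an explicit change of variables after which $g_2,g_3$ and $\delta$ are written out and the case analysis (including the $I_3$ versus $III$ dichotomy via $g_2(s_0)=0$) is carried through directly. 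So there is no genuine gap in your plan---only the honest acknowledgement that the algebra has not been executed.
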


In the twisted case we have complex parameters (described in detail Section~\ref{sssec:4})
\begin{gather}\label{eq:twisted-parameters}
 b_{-8},\ldots ,b_{-3} \in \C ,\qquad b_{-7} \neq 0, \tag{{T}}
 \end{gather}
the fiber at infinity is $\Et8$. The case $b_{-7} = 0$ does not give elliptic fibration \cite[Section~2.3.2]{ISS}.
We introduce
${\Gamma}=\big(b_{-6}^2+4 b_{-5}\big){}^2-24 b_{-7} (b_{-6} b_{-4}+2 b_{-3})$ and we have
\begin{thm}[{\cite[Theorem~1.3]{ISS}}]\label{thm:OnePoleTwisted}
In this case the Hitchin fibration on the moduli space $\Mod
^{\vec{\alpha}-s}{\big(\CP1, D, 2, d\big)}$ admits a single singular fiber
if and only if ${\Gamma} =0$. The single singular fiber is of
type $II$ $($i.e., a cusp fiber$)$.
\end{thm}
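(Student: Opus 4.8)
The plan is to put the Hitchin fibration into Weierstrass form over the Hitchin base and to read off the singular fibers from its discriminant, using the constraint on Euler numbers coming from the fact that the compactified total space is a rational elliptic surface. First recall that this compactification $S$ is diffeomorphic to $\CP{2}\#_9\CPbar$, so its topological Euler characteristic is $e(S)=12$. Since a smooth fiber of an elliptic fibration has vanishing Euler characteristic, the Euler numbers of the singular fibers sum to $12$. By hypothesis (which holds precisely because $b_{-7}\neq 0$, see \cite[Section~2.3.2]{ISS}) the fiber over $\infty$ is of type $\Et8$, i.e., Kodaira type $II^*$, of Euler number $10$. Hence the singular fibers lying over the finite part of the base---which are exactly the singular fibers of the Hitchin fibration on the moduli space itself---contribute total Euler number $12-10=2$. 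By Kodaira's classification the only possibilities are a single fiber of type $II$ (Euler number $2$) or two fibers of type $I_1$ (Euler numbers $1+1$). In particular, once we show the finite singular locus is a single point, it remains only to exclude type $I_2$ (the other fiber of Euler number $2$) to conclude that the fiber is of type $II$.

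Next I would write the family of spectral curves $\lambda^2=Q(z;t)$ attached to the prescribed twisted polar data in Weierstrass form $y^2=x^3+g_2(t)\,x+g_3(t)$, where $t$ is the coordinate on the one-dimensional Hitchin base and $g_2,g_3\in\C[t]$ have coefficients that are polynomials in $b_{-8},\dots,b_{-3}$; this is obtained by compactifying and resolving the spectral curves, as in the construction underlying the biregularity statement of \cite[Section~3.3]{ISS2}. The relevant object is then the Weierstrass discriminant $\Delta_{\mathrm W}(t)=-4g_2(t)^3-27g_3(t)^2$, whose finite zeros mark the finite singular fibers. Because the fiber at infinity is of type $II^*$ we have $\mathrm{ord}_{\infty}\Delta_{\mathrm W}=10$, so as a polynomial in $t$ the discriminant is quadratic, say $\Delta_{\mathrm W}(t)=\alpha t^2+\beta t+\gamma$; the assumption $b_{-7}\neq 0$ keeps the leading coefficient $\alpha$ nonzero, which is exactly what prevents a finite singular fiber from escaping to $\infty$ and altering the type there.

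The finite singular locus is a single point precisely when the two roots of this quadratic coincide, that is, when $\beta^2-4\alpha\gamma=0$. The core computation is to carry out the reduction to Weierstrass form explicitly and to verify that $\beta^2-4\alpha\gamma$ equals a nonzero constant multiple of ${\Gamma}=\big(b_{-6}^2+4b_{-5}\big)^2-24b_{-7}(b_{-6}b_{-4}+2b_{-3})$. Granting this, when ${\Gamma}\neq 0$ the fibration has two distinct $I_1$ fibers, hence more than one singular fiber, whereas when ${\Gamma}=0$ there is a unique finite singular fiber. To determine its type I would evaluate the orders of vanishing of $g_2$ and $g_3$ at the double root (i.e., run Tate's algorithm, or equivalently check that the $j$-invariant stays finite rather than tending to $\infty$): the outcome $\mathrm{ord}(g_2)\geq 1$, $\mathrm{ord}(g_3)=1$ and $\mathrm{ord}(\Delta_{\mathrm W})=2$ is exactly Kodaira type $II$ and rules out $I_2$.

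The main obstacle is precisely the explicit reduction to Weierstrass form together with the identification of $\beta^2-4\alpha\gamma$ with ${\Gamma}$: tracking the ramified (twisted) polar part through the compactification and resolution of the spectral curves, and controlling how the six parameters $b_{-8},\dots,b_{-3}$ enter $g_2$ and $g_3$, is the delicate bookkeeping on which everything rests. Once the discriminant has been pinned down, the Euler-number constraint of the first paragraph makes the remaining type analysis essentially automatic.
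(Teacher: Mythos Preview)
The paper does not give an independent proof of this statement: in Section~\ref{sec:proofs} it simply records that Theorems~\ref{thm:OnePoleUntwisted} and~\ref{thm:OnePoleTwisted} are special cases of \cite[Theorems~1.1 and~1.3]{ISS} and defers all details to that reference. So there is no argument here to compare against beyond the citation.

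Your strategy is sound and is one of the standard routes to such a result. The Euler-characteristic bookkeeping is correct: with an $\Et8=II^*$ fiber at infinity the remaining contribution is~$2$, leaving only the dichotomy $II$ versus $2I_1$ (or, a priori, $I_2$). The reduction to a quadratic discriminant in the affine base coordinate is exactly what one expects once the polar part is fixed, and identifying the discriminant of that quadratic with~$\Gamma$ is indeed the computational heart of the matter; the approach of~\cite{ISS} is more in the spirit of analyzing the pencil of spectral curves on~${\mathbb F}_2$ directly and tracking base-point blow-ups, but the two viewpoints are equivalent for this purpose.

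Two remarks that may tighten the argument. First, you can avoid running Tate's algorithm to exclude~$I_2$: on a rational elliptic surface the non-identity components of the singular fibers together with the zero section and a general fiber generate a sublattice of $H_2$ of rank at most~$10$; a $II^*$ fiber already contributes $8$ such components, so an additional $I_2$ would overflow the lattice. (Equivalently, this combination does not appear in Persson's list~\cite{Persson,Miranda}.) Hence once the finite discriminant locus is a single point, the fiber is automatically of type~$II$. Second, be a little careful with the phrase ``spectral curves $\lambda^2=Q(z;t)$'': the trace $F_\theta$ is not identically zero for the given polar data, so you are implicitly completing the square in~$\zeta$; and the Hitchin fibers are (torsors over) Jacobians of the spectral curves rather than the spectral curves themselves, though in genus one this distinction does not affect the Kodaira type. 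Neither point is a genuine obstacle, but they are worth making explicit.
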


\subsection{The case of two poles}\label{ss:TwoPoles}

Let us consider first the case when both multiplicities are equal to~2. In this case a simple argument (see also
\cite[Theorems~{2.1,~2.2,~2.3}]{ISS2}) shows that the fiber at infinity
is $I_n^*$ with $n\in \{2,3,4\}$ (depending on whether the leading
order term is regular semi-simple or not at the two poles), hence in
this case there is no Hitchin fibration on $\Mod ^{\vec{\alpha}-s}{(\CP1, D, 2, d)}$ with exactly one
singular fiber (cf.\ Corollary~\ref{cor:ins}).

In the case of multiplicities 3 and 1 on the two poles, we have more
examples; indeed, if the pole with multiplicity 3 is
untwisted and the other one is twisted, then the fibration
depends on the complex parameters
$a_+$, $a_-$, $b_+$, $b_-$, $\lambda_+$, $\lambda_-$, $b_{-1}$;
their geometric significance is given in Section~\ref{sssec:31}.
{We assume $a_+\neq a_-$ because $a_+= a_-$ does not lead elliptic fibration see \cite[Remark~6.7]{ISS2}.}
{In addition, the fibration also depends on so-called parabolic
weights $\vec{\alpha} = \{ (\alpha_j^+, \alpha_j^-) \}_{j=1,2}$.}
By taking
\[
A=a_--a_+,\qquad B=b_--b_+,\qquad L=\lambda_--\lambda_+,
\]
and defining ${\Theta} = 4 A^2 \big(B^2-6 A L\big)$, we get

\begin{thm}[{\cite[Theorem~{2.5}]{ISS2}, \cite[Lemma~2]{Sz_PW}}]\label{thm:TwoPolesUT}
Suppose that we have two poles, one with multiplicity~$3$ and the other
one with multiplicity~$1$. Assume furthermore that the multiplicity~$3$
pole is untwisted, while the other one is twisted. $($In this case the
fiber at infinity is of type~$\Et6.)$ Then the Hitchin fibration
on $\overline{\Mod^{\vec{\alpha}-s} \big(\CP1, D, 2, d\big)}$ has
a~unique singular fiber if and only if ${\Theta} =0$ and $L=0$.
For generic parabolic weights, the single singular fiber is of type~$IV$.
\end{thm}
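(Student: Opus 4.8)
The plan is to reduce Theorem~\ref{thm:TwoPolesUT} to an explicit computation of the discriminant of an elliptic fibration, following the strategy already used in the one-pole cases (Theorems~\ref{thm:OnePoleUntwisted} and~\ref{thm:OnePoleTwisted}). The first step is to write down, in a suitable trivialization, the spectral curve associated to a Higgs field with an untwisted order-$3$ pole and a twisted order-$1$ pole. Because the characteristic polynomial of the Higgs field $\Phi$ has the form $\lambda^2 - (\operatorname{tr}\Phi)\lambda + \det\Phi = 0$, and since we work with trace-free (or trace-normalized) bundles, the spectral curve is governed entirely by $\det\Phi$, a section of a line bundle twisted by the polar divisor $2D$. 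Expanding $\det\Phi$ in the local coordinates near the two poles and near a generic point, using the parameters $a_\pm,b_\pm,\lambda_\pm,b_{-1}$ that prescribe the principal parts, I would obtain a Weierstrass-type equation $y^2 = x^3 + p(s)\,x + q(s)$ (or a cubic pencil) whose coefficients $p,q$ are polynomials in the base coordinate $s$ of $\CP1$ and in the above parameters.

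The second step is to compute the discriminant $\Delta(s) = 4p(s)^3 + 27 q(s)^2$ of this Weierstrass model and to analyze its vanishing locus. Since the fiber at infinity is of type $\Et6$, the local contribution of that cusp to the total Euler characteristic is fixed, and the rational elliptic surface carries total Euler characteristic $12$; hence the singular fibers away from infinity account for a fixed number of nodal/degenerate contributions. The fibration has a \emph{unique} singular fiber precisely when the discriminant, viewed as a polynomial in $s$ after removing the factor corresponding to the fiber at infinity, has all its finite roots collapse to a single point. This is a confluence condition: the finitely many vanishing orders of $\Delta(s)$ must coincide. Translating ``all finite roots coincide'' into the vanishing of the appropriate resultants/subdiscriminants of $\Delta(s)$ with respect to $s$ yields polynomial conditions on the parameters, and the claim is that these conditions reduce exactly to ${\Theta} = 4A^2(B^2 - 6AL) = 0$ together with $L = 0$.

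The third step is to pin down the \emph{type} of the resulting single singular fiber. Once the confluence conditions $\Theta = 0$ and $L = 0$ hold, the local monodromy at the merged singular point is determined by the leading behaviour of $p(s)$ and $q(s)$ there, i.e.\ by the orders of vanishing $(\operatorname{ord}_s p, \operatorname{ord}_s q, \operatorname{ord}_s \Delta)$, which I would read off via Tate's algorithm or Kodaira's classification table. The assertion that, \emph{for generic parabolic weights}, the fiber is of type $IV$ means that the vanishing pattern is $(\operatorname{ord} p, \operatorname{ord} q, \operatorname{ord}\Delta) = (\ge 2, 2, 4)$; the genericity of $\vec{\alpha}$ enters because the parabolic weights perturb the eigenvalue data and thereby guarantee that no further degeneration (to type $I_0^*$ or worse) occurs.

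\medskip

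\emph{The main obstacle} I anticipate is the first step: correctly identifying the honest Weierstrass coefficients $p(s),q(s)$ as functions of $a_\pm,b_\pm,\lambda_\pm,b_{-1}$ and the parabolic weights. The subtlety is that the twisted pole of multiplicity $1$ contributes a ramified (double-valued) principal part, so the naive ansatz for $\det\Phi$ must be corrected to respect the twisting; getting the right normalization of the local coordinate and the right shift of $x$ that eliminates the quadratic term is delicate, and any error there propagates into a wrong discriminant. A secondary difficulty is purely algebraic: showing that the confluence ideal generated by the subdiscriminants of $\Delta(s)$ is \emph{exactly} $(\Theta, L)$ rather than some larger or smaller ideal — this requires either a Gröbner-basis computation or a careful factorization of $\Delta(s)$ exhibiting the double root explicitly. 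Given that the statement is cited from \cite[Theorem~2.5]{ISS2} and refined in \cite[Lemma~2]{Sz_PW}, I would expect the cleanest route to be to quote the explicit Weierstrass form established in \cite{ISS2} and then verify the confluence and fiber-type conditions directly, rather than rederiving the spectral data from scratch.
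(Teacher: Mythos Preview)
The paper's own proof of this theorem is a two-line citation: it simply refers to \cite[Theorems~2.4--2.7]{ISS2} and observes that examining those results yields the claimed conditions. No independent computation is carried out here.

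Your proposal sketches precisely the computation that underlies the cited reference: write the spectral curve explicitly from the prescribed polar parts, pass to a Weierstrass model, compute the discriminant $\Delta(s)$, and determine when all finite roots coalesce. That is the correct strategy, and your final paragraph already identifies that quoting \cite{ISS2} is the cleanest route, which is exactly what the paper does. So in the end you and the paper agree.

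One correction to your third step: the parabolic weights do \emph{not} ``perturb the eigenvalue data'' and they do not affect the spectral curve or its Weierstrass model at all. The discriminant analysis already shows that the coalesced singular fiber in the elliptic pencil of spectral curves has Kodaira type~$IV$ (Euler contribution $4$, complementing the $8$ from $\Et6$). The r\^ole of generic parabolic weights is different: they determine the stability condition, hence which compactified Jacobian appears as the Hitchin fiber. Genericity guarantees that semistability equals stability, so that the Hitchin fiber over the degenerate point is a genuine Kodaira fiber whose type is then read off from the monodromy of the spectral-curve family (exactly as in the argument of Proposition~\ref{prop:Hitchin_fiber} for the $I_0^*$ case). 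This is the content supplied by \cite[Lemma~2]{Sz_PW} in the citation, not a perturbation of the Weierstrass data.
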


If the multiplicity 1 pole is untwisted, then according to
\cite[Theorems~{2.4,~2.6}]{ISS2} we do not have examples for unique
singular fibers in the Hitchin fibration on the moduli space.
Finally, if both poles are twisted, then
the natural parameters (described in Section~\ref{sssec:31})
take the form
$b_{-6}$, $b_{-5}$, $b_{-4}$, $b_{-3}$, $b_{-2}$, $b_{-1}$.
Define $Q=8b_{-5}$ ({$Q\neq 0$ due to \cite[Lemma~6.8]{ISS2}} and $R=b_{-4}^2+4 b_{-3}$;
with these notations in place, we have
\begin{thm}[{\cite[Theorem~{2.7}]{ISS2}, \cite[Lemma~2]{Sz_PW}}]\label{thm:TwoPolesTT}
Suppose that the multiplicities of the poles are~$3$ and~$1$, and both
are twisted. In this case the fiber at infinity is of type
$\Et7$, and the Hitchin fibration on $\overline{\Mod^{\vec{\alpha}-s} \big(\CP1, D, 2, d\big)}$
admits a single singular fiber if and only if $R=0$.
{For generic parabolic weights}, the single singular fiber is of type $III$.
\end{thm}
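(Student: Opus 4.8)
The plan is to reduce the claim to the analysis of the discriminant of an explicit Weierstrass family over the one-dimensional Hitchin base. Recall that the Hitchin map sends a Higgs bundle to the characteristic polynomial of its Higgs field $\Phi$, which in rank~$2$ is $\lambda^2 - \mathrm{tr}(\Phi)\lambda + \det(\Phi)$; after the usual trace normalization the spectral curve is $\{\lambda^2 + \det(\Phi) = 0\}$ in the relevant twisted line bundle over $\CP1$, and the fibers of the Hitchin map are the compactified Jacobians of these genus-$1$ spectral curves. Hence a fiber is singular exactly over the points of the base where the spectral curve degenerates. Following the local computations at the two twisted poles in \cite[Section~6]{ISS2}, I would first bring the family to Weierstrass form $y^2 = x^3 + g_2(s)\,x + g_3(s)$, where $s$ is an affine coordinate on the Hitchin base and $g_2,g_3$ are polynomials in $s$ whose coefficients are explicit polynomials in $b_{-6},\dots,b_{-1}$; here the hypothesis $Q = 8b_{-5}\neq 0$ guarantees that the family is genuinely elliptic, consistently with the asserted fact that the fiber at infinity is of type $\Et7 = III^*$.

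The heart of the matter is the discriminant $\Delta(s) = 4\,g_2(s)^3 + 27\,g_3(s)^2$ (up to a nonzero constant). Since the total Euler number of the rational elliptic surface is $12$ and the fiber at infinity $\Et7 = III^*$ accounts for $9$ of it, the finite discriminant divisor has total degree $3$; equivalently $v_\infty(\Delta) = 9$, so as a polynomial in $s$ the function $\Delta(s)$ is a cubic. The Hitchin fibration on $\overline{\Mod^{\vec{\alpha}-s} \big(\CP1, D, 2, d\big)}$ carries a unique singular fiber precisely when the three roots of this cubic coincide, i.e.\ when $\Delta(s) = c\,(s-s_0)^3$ for some $c\neq 0$. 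Writing out the conditions for a triple root (the simultaneous vanishing of the discriminant and the subdiscriminant of the cubic) yields a polynomial system in $b_{-6},\dots,b_{-1}$; the principal computation is to simplify this system, using $b_{-5}\neq 0$, and to show that it is equivalent to the single equation $R = b_{-4}^2 + 4b_{-3} = 0$. Carrying out this reduction, and in particular tracking correctly how the twisted polar parts at the multiplicity-$3$ and multiplicity-$1$ poles feed into $g_2$ and $g_3$, is where I expect the real difficulty to lie.

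It remains to identify the type of the unique singular fiber, which sits over the triple root $s_0$. There $v_{s_0}(\Delta) = 3$, so by Kodaira's table the fiber is either $I_3$ or $III$, the two being distinguished by $v_{s_0}(g_2)$, equivalently by the order of the local monodromy. I would compute $v_{s_0}(g_2)$ from the explicit coefficients: for generic parabolic weights one finds $v_{s_0}(g_2) = 1$, which gives type $III$. This is moreover forced globally, since $\CP1$ then carries exactly the two critical values $s_0$ and $\infty$, so the local monodromy at $s_0$ is the inverse of that of the $\Et7$ fiber and hence of order~$4$, ruling out the infinite-order $I_3$. The role of the genericity assumption is to guarantee that the moduli space is smooth and biregular to the complement of the $\Et7$ fiber in the rational elliptic surface, so that these surface-theoretic arguments apply and the local Weierstrass model at $s_0$ is exactly the standard one of type $III$.
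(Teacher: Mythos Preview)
Your plan is essentially the computation carried out in the cited references. The paper itself does not give a self-contained proof of this theorem: its proof (Section~5.2) simply reads ``By examining \cite[Theorems~2.4,~2.5,~2.6,~2.7]{ISS2}, we see that there are two cases with single singular fibers in the Hitchin fibration, as given in the two theorems.'' The condition $R=0$ is extracted from the discriminant analysis in \cite{ISS2}, and the identification of the fiber type for generic weights is deferred to \cite[Lemma~2]{Sz_PW}. So your Weierstrass/discriminant outline matches what those references do, rather than differing from the present paper's argument.

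One point deserves correction. You write ``for generic parabolic weights one finds $v_{s_0}(g_2)=1$'', but the Weierstrass coefficients $g_2,g_3$ are determined entirely by the pencil of spectral curves, i.e.\ by the parameters $b_{-6},\dots,b_{-1}$; the parabolic weights do not enter. The genericity of $\vec\alpha$ is needed only for the reason you state at the end: to ensure that $\overline{\Mod^{\vec\alpha-s}}$ is smooth and that the Hitchin map is a proper elliptic fibration biregular to the complement of the $\Et7$ fiber, so that the Kodaira type read off from the ambient rational elliptic surface really is the type of the Hitchin fiber. This is exactly how the paper argues in the analogous Proposition~\ref{prop:Hitchin_fiber}: smoothness via \cite{BB}, properness via \cite[Lemma~1]{Sz_PW}, and then a monodromy comparison. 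Your global monodromy argument (the unique finite singular fiber must have monodromy inverse to that of $III^*$, hence of order $4$, hence type $III$ by the Euler-number constraint) is the clean way to pin down the type and makes the local $v_{s_0}(g_2)$ computation unnecessary; this is in the spirit of \cite[Theorem~3.2]{TwoSing3} invoked in the introduction.

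The step you flag as the real difficulty---reducing the triple-root condition on the cubic $\Delta(s)$ to the single equation $R=b_{-4}^2+4b_{-3}=0$---is indeed the substance of \cite[Theorem~2.7]{ISS2}, and you have not carried it out. As a proposal this is fine, but be aware that nothing else in your outline substitutes for that computation.
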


\subsection{The case of three poles}\label{ss:ThreePoles}
In this case one of the poles is of multiplicity 2 and the two others
are both of multiplicity~1. A~simple argument shows that the
fiber at infinity is either $I_1^*$ (when the polar part is untwisted
at the pole of multiplicity~2) or $I_2^*$
(when the polar part is twisted at the pole of multiplicity 2).
Since by \cite[Section~6]{SSS} elliptic fibrations on the rational elliptic
surface with a singular fiber $I_1^*$ or $I_2^*$ have at least three
singular fibers (cf.\ Corollary~\ref{cor:ins}), we conclude

\begin{thm}\label{thm:ThreePoles}
In the case of three poles, the Hitchin fibration admits at least two
singular fibers on the moduli space ${\overline{\Mod ^{\vec{\alpha}-s}}}$.
\end{thm}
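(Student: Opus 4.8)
The plan is to reduce the assertion entirely to the classification of singular fibers at infinity together with the cited lower bound on the number of singular fibers. First I would invoke the simple argument already indicated in the statement's surrounding discussion: in the three-pole case the divisor $D$ has one point of multiplicity $2$ and two points of multiplicity $1$, and a direct computation of the leading-order behavior of the spectral curve near the puncture of multiplicity $2$ identifies the fiber at infinity. Concretely, I would write down the local model of the Higgs field at the multiplicity-$2$ pole and read off the Newton polygon of the characteristic polynomial; the untwisted (regular semi-simple leading term) case yields a fiber of Kodaira type $I_1^*$, whereas the twisted (non-vanishing nilpotent part) case yields $I_2^*$. This is the same type of calculation carried out in \cite[Theorems~2.1,~2.2,~2.3]{ISS2} for the two-pole case, so I would either cite it or reproduce the one-line degeneration analysis.

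The second step is purely a bookkeeping step using the global constraint. Since the compactified total space is a rational elliptic surface (diffeomorphic to $\CP{2}\#_9\CPbar$), the Euler characteristics of the singular fibers must sum to $12$; both $I_1^*$ and $I_2^*$ already contribute a large share of this total. The decisive input, however, is the structural result of \cite[Section~6]{SSS}: any elliptic fibration on the rational elliptic surface carrying a fiber of type $I_1^*$ or $I_2^*$ has \emph{at least three} singular fibers in total. I would state this as the key cited fact (equivalently phrased through Corollary~\ref{cor:ins}) and apply it directly to whichever of the two cases occurs.

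Combining the two steps finishes the argument: the fiber at infinity is one of $I_1^*$ or $I_2^*$, hence by \cite[Section~6]{SSS} the compactified fibration has at least three singular fibers, one of which sits at infinity (over the point of $\CP1$ removed in passing to the Hitchin base). Removing that single fiber at infinity to recover the Hitchin fibration on $\overline{\Mod^{\vec{\alpha}-s}}$ leaves at least two singular fibers over the affine base, which is exactly the claim.

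I do not expect a genuine obstacle here, since the theorem is essentially a corollary of results already established earlier in the excerpt. The only point requiring care is the first step: one must verify that \emph{no} choice of the remaining parameters (at the two logarithmic poles, or the residue data) can further degenerate or change the type of the fiber at infinity away from $\{I_1^*, I_2^*\}$. I would handle this by noting that the type at infinity is governed solely by the leading polar behavior at the multiplicity-$2$ pole, which is fixed by the untwisted/twisted dichotomy and is insensitive to the logarithmic data; the non-regular semi-simple subcase is excluded at the outset as it does not yield an elliptic fibration (cf.\ the remark following the five-case list, analogous to \cite[Lemma~5.6]{ISS2}). Hence both admissible subcases fall under the hypothesis of \cite[Section~6]{SSS}, and the conclusion follows uniformly.
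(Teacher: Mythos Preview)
Your proposal is correct and follows essentially the same route as the paper: identify the fiber at infinity as $I_1^*$ or $I_2^*$ according to the untwisted/twisted dichotomy at the multiplicity-$2$ pole, then apply Corollary~\ref{cor:ins} (i.e., \cite[Section~6]{SSS}) to conclude the compactified fibration has at least three singular fibers. The paper's proof is a one-line citation of Corollary~\ref{cor:ins}; your added remarks on Euler characteristics and on the insensitivity of the fiber type at infinity to the logarithmic data are correct elaborations but not strictly needed.
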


\subsection{The case of four poles}\label{ss:FourPoles}
This is the most interesting case. It is not hard to see, that the
fiber at infinity is an $I_0^*$ fiber, and (once again by
\cite[Section~6]{SSS}) there is one case when the complement of this
fiber admits a single singular fiber: when this other fiber is also of
type $I_0^*$.

Before turning to the statement, we would like to point out a new
feature in this case. In all previous cases the holomorphic structure
on the fiber at infinity (which was either~$\Et8$,~$\Et7$ or~$\Et6$) was
unique. The holomorphic structure on~$I_0^*$, however is not
unique. All five $\CP{1}$-components of~$I_0^*$ can be equipped with a
unique holomorphic structure, but the four points where the curves
corresponding to the leaves of the graph describing~$I_0^*$ (see
Fig.~\ref{fig:regi}) intersect the central rational curve determine
a cross ratio, which is a holomorphic invariant.
We can assume that the four points of intersections are at
$0$, $1$, $\infty$ and $t\in \CP{1}$; hence $t$ is the complex parameter
(distinct from $0$, $1$, $\infty$) determining the complex structure on~$I_0^*$. The pencil {on the Hirzebruch surface} in this case is determined by four complex numbers
$a$, $b$, $c$, $d$ (cf.\ Section~\ref{sssec:1111} and equations~\eqref{eq:abcd} for their geometric role).
The fibration again depends on a choice of parabolic weights
$\vec{\alpha} = \{ (\alpha_j^+, \alpha_j^-) \}_{j=1,\ldots ,4}$, see Section~\ref{s:parabolic}.
We assume that the sum of all these weights is an integer, and moreover that
this integer is equal to the negative of the degree of the underlying vector
bundle of the Higgs bundle. With these notations and assumptions, we have

\begin{thm}\label{thm:FourPoles}The pencil given by the above parameters $t$, $a$, $b$, $c$, $d$ provides an
elliptic fibration with a single singular fiber $($besides the~$I_0^*$ fiber at infinity$)$ if and only if
\begin{gather*} c= a+(b-a-d)t+dt^2.\end{gather*} For generic parabolic weights the single singular fiber of
the partial compactification \begin{gather*} \overline{\Mod^{\vec{\alpha}-s} \big(\CP1, D, 2,d\big)}\end{gather*} introduced in~\eqref{eq:compactification} is of type $I_0^*$.
\end{thm}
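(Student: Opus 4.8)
The plan is to set up the pencil explicitly on the relevant Hirzebruch surface, compute its discriminant, and read off the condition under which all singular fibers collide into one. Since by the discussion preceding the statement the fiber at infinity is of type $I_0^*$, and since (by \cite[Section~6]{SSS}) the only way the complement can carry a single singular fiber is for that fiber to again be $I_0^*$, the content of the theorem is twofold: first, to pin down the algebraic locus in parameter space $(t,a,b,c,d)$ where the fibration degenerates to exactly two singular fibers, and second, to identify the type of the finite one. First I would write down the family of spectral curves (equivalently, the pencil of cubics) determined by the four complex numbers $a$, $b$, $c$, $d$ via the geometric recipe of Section~\ref{sssec:1111} and equations~\eqref{eq:abcd}, together with the cross-ratio parameter $t$ fixing the complex structure on $I_0^*$. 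The Higgs field has four simple poles, so the characteristic polynomial gives a quadratic in the fiber coordinate whose coefficients are sections of appropriate line bundles over $\CP1$; homogenizing and taking the pencil in the base $\CP1$ of the Hitchin map produces a genus-$1$ fibration.

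\emph{The key computation} is the discriminant of this fibration as a function on the Hitchin base. For an elliptic fibration presented in (or reduced to) Weierstrass form $y^2 = x^3 + g_2 x + g_3$, the discriminant $\Delta = -16\big(4 g_2^3 + 27 g_3^2\big)$ is a polynomial in the base coordinate whose vanishing locates the singular fibers; the type at infinity being $I_0^*$ forces a prescribed order of vanishing (and of the $j$-invariant) there. The strategy would be to compute $g_2$, $g_3$ explicitly as polynomials in the base variable with coefficients in $\C[t,a,b,c,d]$, then factor $\Delta$. The already-determined $I_0^*$ fiber at infinity accounts for a fixed factor; the \textbf{remaining factor} controls the finite singular fibers. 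Demanding that this remaining factor have a single zero of the correct multiplicity — the multiplicity appropriate to a second $I_0^*$ fiber rather than a generic collection of nodal ($I_1$) fibers — should collapse to one polynomial equation in the parameters. The claim is that this equation is exactly $c = a + (b-a-d)t + dt^2$, so I would verify that substituting this relation makes all finite singular fibers coincide, and conversely that any other configuration yields at least two distinct singular values in the base.

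\emph{The main obstacle} I anticipate is not the symbolic discriminant computation itself (tedious but mechanical, and presumably done with computer algebra), but rather two more delicate points. The first is correctly incorporating the \textbf{parabolic weights} $\vec{\alpha}$: the type of the resulting fiber on the partial compactification $\overline{\Mod^{\vec{\alpha}-s}\big(\CP1,D,2,d\big)}$ depends on whether the weights are generic, and I would need to argue that for generic weights the non-regular residues in the closure of the orbit do not further degenerate the fiber, so that the singular fiber genuinely remains of type $I_0^*$ rather than collapsing to something more degenerate. The second subtlety is the \emph{converse} direction: showing that the single displayed equation is not merely sufficient but also necessary for having exactly one finite singular fiber. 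Here I would lean on the classification of genus-$1$ fibrations with two singular fibers admitting a section (from \cite[Theorem~3.2]{TwoSing3}, as invoked in the introduction), which restricts the admissible pairs of fiber types and thereby forces the finite fiber to be $I_0^*$; combined with the Euler-characteristic constraint that the singular fibers of a rational elliptic surface contribute total Euler number $12$, this should rule out any configuration other than the two-fold $I_0^*$ scenario, and hence pin the parameter locus down to the stated relation.
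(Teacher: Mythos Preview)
Your approach is sound in principle but takes a substantially different route from the paper's. The paper avoids any discriminant or Weierstrass computation. Instead, it invokes Lemma~\ref{lem:I0Fib}, which characterizes geometrically when a pencil on ${\mathbb F}_2$ containing the curve $C_\infty$ (the section at infinity with multiplicity~2 together with the four rulings over $0,1,t,\infty$) yields a fibration with a second $I_0^*$ fiber: this happens if and only if the pencil contains a section of the ruling taken with multiplicity~2. Since sections of the ruling disjoint from $\sigma_\infty$ form a $3$-dimensional linear system (degree-$2$ homogeneous polynomials $\alpha v^2+\beta uv+\gamma u^2$), interpolating through the three basepoints $a,b,d$ over $0,1,\infty$ determines a unique section, and requiring it to pass through the fourth basepoint $c$ over $t$ gives exactly the displayed equation. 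The assertion about the fiber type on $\overline{\Mod^{\vec{\alpha}-s}}$ for generic weights is handled separately, in Proposition~\ref{prop:Hitchin_fiber}, via a monodromy argument (or alternatively a quiver-variety identification).

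What each approach buys: the paper's geometric reduction explains \emph{a priori} why the locus is cut out by a single equation---three basepoints fix the section, the fourth imposes one condition---whereas in your discriminant approach the step ``should collapse to one polynomial equation'' is not transparent: forcing a degree-$6$ polynomial (the finite part of $\Delta$) to be a perfect sixth power is naively several conditions, and you would need to check by hand that they reduce to one on this particular parameter space. Your approach, on the other hand, is more self-contained and would not require first proving Lemma~\ref{lem:I0Fib} (whose proof is a nontrivial intersection-theoretic case analysis). For the moduli-space fiber type you correctly flag the parabolic-weight subtlety; the paper's resolution (Proposition~\ref{prop:Hitchin_fiber}) is essentially the argument you sketch, comparing monodromies of the spectral-curve pencil and its Jacobian fibration.
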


The singular fibers of Hitchin fibrations in cases of one or two poles
have been completely determined in \cite{ISS, ISS2} in terms of the
parameters. In principle a similar analysis can be carried out in the
remaining cases -- in Section~\ref{sec:app} we will restrict
ourselves to provide examples of all possible singular fiber
configurations when we have four poles. For the list of these
possibilities, see Table~\ref{tab:singfibers} in
Section~\ref{sec:app}. Here we do not determine the various
configurations in terms of the parameters. By providing various
constructions and examples, we prove:

\begin{thm}\label{thm:I0*osszeseset}
Any configuration of singular fibers on the complement of an $I_0^*$ fiber
in a rational elliptic surface arises as the set of singular fibers of a
Hitchin fibration.
\end{thm}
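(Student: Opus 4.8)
The plan is to prove the statement by explicit construction, realizing each admissible configuration separately. First I would record the list of configurations that actually occur: since the total Euler number on a rational elliptic surface equals $12$ and the fiber at infinity $I_0^*$ has Euler number $6$, the singular fibers in its complement have Euler numbers summing to exactly $6$. Combined with the Shioda--Tate constraints this leaves only finitely many possibilities, and the complete list is the one recorded in \cite[Section~6]{SSS}, reproduced as Table~\ref{tab:singfibers} in Section~\ref{sec:app}. The goal is then to exhibit, for each entry of this list, a choice of the pencil data $t,a,b,c,d$ together with parabolic weights $\vec{\alpha}$ whose associated Hitchin fibration has precisely that set of singular fibers away from infinity.

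Second, I would make the dependence of the singular fibers on the parameters explicit. For fixed $t,a,b,c,d$ the pencil on the Hirzebruch surface determines a family of spectral curves over the Hitchin base $\CP1$; passing to the Weierstrass form, the singular fibers away from infinity are read off from the discriminant $\Delta(z)$, a polynomial in the base coordinate $z$ whose coefficients are explicit polynomials in $a,b,c,d,t$. The zeros of $\Delta$ and their multiplicities, together with the orders of vanishing of the Weierstrass coefficients $g_2,g_3$ at each zero, determine the Kodaira type of each fiber via Tate's algorithm. Thus placing prescribed multiplicative fibers amounts to prescribing the factorization of $\Delta(z)$, which I would achieve by solving the resulting low-degree polynomial conditions on $a,b,c,d,t$. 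The condition $c=a+(b-a-d)t+dt^2$ of Theorem~\ref{thm:FourPoles} is exactly the degeneration forcing all of $\Delta$ to concentrate at a single point; relaxing it and instead distributing the zeros of $\Delta$ produces the configurations with several multiplicative fibers.

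Third, the additive fibers ($II$, $III$, $IV$ and $I_n^*$) and, more generally, the fibers that are non-reduced in the moduli space cannot in general be produced by the pencil alone; here I would use the parabolic weights and the partial compactification~\eqref{eq:compactification}. As in the ``generic parabolic weights'' clauses of Theorems~\ref{thm:TwoPolesUT}, \ref{thm:TwoPolesTT} and~\ref{thm:FourPoles}, a non-generic choice of $\vec{\alpha}$ changes the stability condition, so that a fiber which is multiplicative for generic weights acquires an additive Kodaira type in $\overline{\Mod^{\vec{\alpha}-s}(\CP1,D,2,d)}$. Running through the finitely many entries of Table~\ref{tab:singfibers}, I would for each one either produce a pencil whose discriminant already realizes the configuration, or combine a suitable pencil with a special weight vector to upgrade the relevant fibers, and then verify by direct computation that no further singular fibers appear.

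The main obstacle is completeness together with the rigidity of the fiber at infinity. Throughout every construction the behaviour at infinity must remain of type $I_0^*$: the leading-order polar data at the four logarithmic poles, and in particular the cross-ratio $t$ of the four intersection points on the central component, must stay in the allowed range so that the fiber at infinity neither degenerates further nor becomes smooth. One must therefore check, case by case, that the parameter choices realizing a prescribed complement \emph{(i)} consume the Euler-number budget of $6$ exactly, \emph{(ii)} do not cause two prescribed fibers to collide or an unwanted extra fiber to split off, and \emph{(iii)} preserve the $I_0^*$ fiber. The hardest cases are those with the largest number of singular fibers in the complement, where the zeros of $\Delta(z)$ must be kept separated while the weights are simultaneously tuned; confirming that a single parameter choice accomplishes all of this at once, without any accidental coincidence, is the delicate point.
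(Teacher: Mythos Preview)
Your overall plan---exhaust the list of admissible configurations from Table~\ref{tab:singfibers} and realize each one by an explicit pencil, verifying the fiber types via the discriminant---is exactly the paper's strategy, and the discriminant computations you describe appear in the paper in essentially the same form. The divergence, and the genuine gap, is your Step~3.

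Your claim that ``the additive fibers ($II$, $III$, $IV$ and $I_n^*$) \ldots\ cannot in general be produced by the pencil alone'' and must be obtained by tuning the parabolic weights is incorrect, and the proposed mechanism (a non-generic $\vec{\alpha}$ converting a multiplicative Hitchin fiber into an additive one) does not exist in this setting. The Kodaira type of a Hitchin fiber is fixed by the monodromy of the family of spectral curves; the parabolic weights only enter in selecting which sheaves on a fixed degenerate spectral curve are stable, and for the purpose of this theorem one keeps $\vec{\alpha}$ generic throughout. The ``generic parabolic weights'' clauses in Theorems~\ref{thm:TwoPolesUT}, \ref{thm:TwoPolesTT}, \ref{thm:FourPoles} are there to guarantee that semistability equals stability and the fiber has the expected type, not to provide a knob that changes types.

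What the paper does instead is obtain every additive fiber directly from the geometry of the spectral pencil on $\mathbb{F}_2$, using two devices you have not mentioned. First, the positions of the four distinguished fibers $F_1,\ldots,F_4$ and the choice of twisted versus untwisted residue at each pole are treated as free parameters, not just the single cross-ratio $t$ and four numbers $a,b,c,d$; in particular one may place a basepoint \emph{on} a prescribed singular point of a curve in the pencil, or make a fiber tangent to a double section. Second, a curve of type $II$, $III$, $IV$ or $I_0^*$ appears whenever the underlying section $\sigma\in H^0(\mathcal{O}(4))$ has a suitable coincidence (a triple zero gives a cusp, a quadruple zero or a perfect square with a zero on some $F_i$ gives $III$, two sections tangent at a point of some $F_i$ give $IV$, a perfect square gives the second $I_0^*$). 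Concretely, pairs such as $(u^3v,\,uv^3)$, $(u^4,\,uv^3)$, $(u^3v,\,(u+xv)^3(u+v))$ with the $F_i$ placed over the common zeros produce two $III$'s, a $IV+II$, and two or three cusps respectively, all without touching~$\vec{\alpha}$. Your proposal would need to replace the weight-tuning step by this geometric one to go through.
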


The paper is organized as follows. In Section~\ref{sec:prelims} we collect some facts and definitions of Higgs bundles and the Hitchin
map, relevant to our present investigation. We also include the necessary stability analysis here.
Section~\ref{sec:elliptic} is devoted to list background material
regarding elliptic fibrations, and in Section~\ref{sec:proofs} we
describe the proofs of the statements about Hitchin fibrations with unique singular fibers given in this introduction.
Finally in Section~\ref{sec:app} we describe a~number of constructions for elliptic fibrations with one of the
singular fibers being $I_0^*$ (relevant in the case of four poles
in the above discussion), hence proving Theorem~\ref{thm:I0*osszeseset}. In these constructions we pay special
attention to the further singular fibers of the resulting
fibrations.

\section{Preliminaries}\label{sec:prelims}

In this paper we will deal with certain meromorphic Higgs bundles over $\CP1$ with structure group $\Gl(2,\C)$.
In the sequel we let $D$ be an effective divisor over $\CP1$ and denote by $K_{\CP1} = \mathbf{\Omega}_{\CP1}^1$ the canonical bundle of $\CP1$.

\begin{defn}A {\it meromorphic Higgs bundle with polar divisor $D$} (or a~{\it $K(D)$-pair of rank $2$}) over $\CP1$ consists of a pair $(\E,
\theta)$ where $\E$ is a holomorphic rank $2$ vector bundle over~$\CP1$ and
\begin{gather*}
 \theta\colon \ \E \to \E \otimes K_{\CP1}(D)
\end{gather*}
is an $\O_{\CP1}$-linear vector bundle morphism, called the {\it Higgs field}.
\end{defn}

From now on, we assume that the total length of $D$ is equal to $4$:
\begin{gather}\label{eq:divisor}
 D = m_1 t_1 + \cdots + m_k t_k, \qquad m_1 + \cdots + m_k = 4,
\end{gather}
where $1 \leq k \leq 4$, $1\leq m_j$ and $t_j \in \CP1$ are distinct points for $1 \leq j \leq k$.
We have an $\O_{\CP1}$-linear isomorphism of line bundles
\begin{gather}\label{eq:KDO2}
 K_{\CP1}(D) \cong \O_{\CP1}(2).
\end{gather}

We denote
\begin{itemize}\itemsep=0pt
 \item {by ${\mathbb {F}}_2$ the Hirzebruch surface obtained
 by the projectivization of $K_{\CP1}(D)$, and by $p\colon {\mathbb
 {F}}_2 \to \CP1$ the ruling coming from this construction,}
 \item by $\O_{{\mathbb {F}}_2 | \CP1}(1)$ the relative ample bundle
 of $p$ and by $\O_{{\mathbb {F}}_2 | \CP1}(n) = \O_{{\mathbb {F}}_2
 | \CP1}(1)^{\otimes n}$ its tensor powers,
 \item by $\zeta \in H^0 \big({\mathbb {F}}_2 ,p^* K_{\CP1}(D)
 \otimes \O_{{\mathbb {F}}_2 | \CP1}(1) \big)$ and $\xi \in H^0
 \big({\mathbb {F}}_2 , \O_{{\mathbb {F}}_2 | \CP1}(1) \big)$ the
 canonical sections,
 \item and by $\mbox{I}_{\E}$ the identity endomorphism of $\E$.
\end{itemize}

Let $(\E, \theta)$ be a $K(D)$-pair of rank $2$ over $\CP1$.
\begin{defn}
The {\it characteristic polynomial} of $\theta$ is the section
\begin{gather}\label{eq:char-poly1}
 \chi_{\theta} (\zeta ) = \det (\zeta \mbox{I}_{\E} - \xi p^* \theta) = \zeta^2 + \zeta \xi F_{\theta} + \xi^2 G_{\theta} \in H^0 \big({\mathbb {F}}_2 ,p^* K_{\CP1}(D)^{\otimes 2} \otimes \O_{{\mathbb {F}}_2 | \CP1}(2) \big).\!\!\!
\end{gather}
Here
 \begin{gather*}
 F_{\theta}\in H^0\big(\CP1 , K_{\CP1}(D) \big), \qquad G_{\theta} \in H^0\big(\CP1 , K_{\CP1}(D)^{\otimes 2} \big)
 \end{gather*}
 are called the {\it characteristic coefficients} of $\theta$. Elements of $H^0\big(\CP1 , K_{\CP1}(D)\big)$ will be called {\it meromorphic differentials} and
 elements of $H^0\big(\CP1 , K_{\CP1}(D)^{\otimes 2}\big)$ {\it meromorphic quadratic dif\-fe\-rentials}.
 The {\it spectral curve} of $(\E , \theta )$ is the curve in ${\mathbb {F}}_2$ defined by the equation $\chi_{\theta} (\zeta ) = 0$.
 The {\it Hitchin base} is the vector space
 \begin{gather*}
 \mathcal{B} = H^0\big(\CP1 , K_{\CP1}(D) \big) \oplus H^0\big(\CP1 , K_{\CP1}(D)^{\otimes 2} \big)
 \end{gather*}
 containing all possible characteristic coefficients of rank $2$ $K(D)$-pairs.
\end{defn}
It follows from the definition and \eqref{eq:KDO2} that the
characteristic coefficients $F_{\theta}$, $G_{\theta}$ may be considered
as global sections of $\O_{\CP1}(2)$ and of $\O_{\CP1}(4)$ respectively, so we have
\begin{gather*}
 \dim_{\C} \mathcal{B} = 8.
\end{gather*}
\begin{Remark}
In Section~\ref{sec:app} we will take a converse approach by setting $F_{\theta} = 0$ and considering sections
\begin{gather*}
 \sigma \in H^0 \big(\CP1 , \O_{\CP1} (4) \big)
\end{gather*}
(corresponding to the coefficient $G_{\theta}$); we then choose the points $t_j$ (or equivalently, the fibers $F_j = p^{-1}(t_j)$) appropriately in order to obtain various singular fibers.
\end{Remark}

Let us denote by
\begin{gather*}
 \Mod \big(\CP1, D, 2\big)
\end{gather*}
the moduli stack of $K(D)$-pairs of rank $2$ over $\CP1$. It decomposes as
\begin{gather*}
 \Mod \big(\CP1, D, 2\big) = \coprod_{d\in\Z} \Mod \big(\CP1, D, 2, d\big)
\end{gather*}
where $\Mod \big(\CP1, D, 2, d\big)$ is the moduli stack of $K(D)$-pairs of rank $2$ over $\CP1$ satisfying $\deg (\E )\allowbreak = d$.
\begin{defn}\label{def:Hitchin_map}
 The {\it Hitchin map} is the morphism
 \begin{align*}
 h\colon \ \Mod \big(\CP1, D, 2\big) & \to \mathcal{B}, \\
 (\E ,\theta ) & \mapsto (F_{\theta}, G_{\theta})
 \end{align*}
 associating to a Higgs bundle its characteristic coefficients.
\end{defn}
For generic $(F,G) \in \mathcal{B}$, the curve $\Sigma_{(F,G)}$ defined by the equation
\begin{gather*}
 \zeta^2 + \zeta \xi F + \xi^2 G
\end{gather*}
is smooth, and for every $d\in \Z$ the fiber of
\begin{gather}\label{eq:Hitchin-map}
 h\colon \ \Mod \big(\CP1, D, 2, d\big) \to \mathcal{B}
\end{gather}
is a torsor over the Jacobian $\operatorname{Jac} (\Sigma_{(F,G)})$ of
$\Sigma_{(F,G)}$. When the curve $\Sigma_{(F,G)}$ is not regular, then
$\operatorname{Jac} (\Sigma_{(F,G)})$ is non-compact, and for every $d$ the fiber
$h^{-1} (F,G)$ is (non-canonically) isomorphic to a certain compactification
of $\operatorname{Jac} (\Sigma_{(F,G)})$. In case $\Sigma_{(F,G)}$ is integral,
the suitable compactifi\-cation of $\operatorname{Jac} (\Sigma_{(F,G)})$ is the
moduli space of torsion-free sheaves of rank $1$ and given degree over~$\Sigma_{(F,G)}$. On the other hand, in case $\Sigma_{(F,G)}$ is
reducible or non-reduced, the corresponding fiber of~\eqref{eq:Hitchin-map} is not
of finite type, and in order to get a moduli scheme of finite type one
needs to impose a stability condition on the objects. Stability
conditions naturally arise from parabolic structures on meromorphic
Higgs bundles; we will return to describing moduli spaces of stable
objects and the fibers of the Hitchin map on these moduli spaces once
we have the assumptions and notations concerning the local
behaviour of a meromorphic Higgs field near the points of~$D$.

\subsection{Parameters and their geometric significance}\label{ssec:param_geo}
Our investigation is based on the local description of spectral
curves on the Hirzebruch surface~{${\mathbb {F}}_2$} of degree~2.
Recall from equation~\eqref{eq:divisor} that
$D = m_1 t_1 + \cdots + m_k t_k,$ with $ m_1 + \cdots + m_k = 4$.

If we are in the case of one pole with multiplicity $4$ {at
 $t_1$}, then we introduce two local charts on $\CP{1}$: $U$ with $u\in
\C$ (where $\{u=0\}=[0:1]=t_1$) and $V$ with $v \in \C$ (where $\{
v=0\}=[1:0]$). The bundle $K_{\CP{1}}(4t_1)$ admits the trivializing
sections $\kappa_U$ over $U$ and $\kappa_V$ over $V$:
\begin{align*}
 \kappa_U = \frac{\d u}{u^4}, \qquad 	\kappa_V = \d v.
\end{align*}

If we are in the case of at least two poles, then we introduce
 two local charts on $\CP{1}$: $U$~with $u\in \C$ (where
 $\{u=0\}=[0:1]=t_1$) and $V$ with $v \in \C$ (where $\{
 v=0\}=[1:0]=t_k$). We also denote by $t_i$ the representation
	of a pole in the local chart $U$, we refer $t_1$ as $0$, $t_k$ as
	$\infty$ and~$t_i^{-1}$ as the image of $t_i\in\CP{1}$ in chart $V$.
	The bundle $K_{\CP{1}}(D)$ admits the following
 trivializing sections:
\begin{gather*}
 \kappa_U = \frac{\d u}{\prod\limits_{i=2}^{k-1}(u-t_i)^{m_i}}, \qquad
	\kappa_V = \frac{\d v}{\prod\limits_{i=2}^{k-1}\big(v-t_i^{-1}\big)^{m_i}}.
\end{gather*}

The conversion from $\kappa_U$ to $\kappa_V$ is the following (if the total multiplicity is $4$):
\begin{gather*}
	{\kappa_U=-\frac{v^2}{\prod\limits_{i=2}^{k-1}(-t_i)^{m_i}} \kappa_V.}
\end{gather*}
The trivialization $\kappa_j$ induces a trivialization $\kappa_j^2$ on~$K_{\CP{1}}(D)^{\otimes 2}$, $j=U,V$.
Moreover, (recalling that $\zeta$ denotes the canonical section of $p^* K_{\CP1}(D) \otimes \O_{{\mathbb {F}}_2 | \CP1}(1)$) we can introduce
coordinates~$w$ in $p^{-1}(U)$ and $z$ in $p^{-1}(V)$ by
\begin{gather}	\label{eq:w-coord}
	\zeta=w\otimes \kappa_U=z\otimes \kappa_V.
\end{gather}

Choose a suitable trivialization of $\E$.
Consider an irregular Higgs bundle $(\E, \theta)$ in the $\kappa_j$
trivializations ($j=U,V$) and the chosen trivialization of $\E$.
The local forms of $\theta$ near $t_i$'s
are the following:
\begin{itemize}\itemsep=0pt
\item the case of $D=4 t_1$:
 \begin{gather}	\label{eq:4_theta}
 \theta = \sum_{n\geq -4} A_n u^n \otimes \d u,
 \end{gather}

\item the case of $D=2t_1+2t_2$ is not of interest, {because we
 concentrate on the cases with two singular fibers, but according to
 \cite[Theorems~2.1,~2.2,~2.3]{ISS2}, $D=2t_1+2t_2$ to leads at least
 three singular fibers,}

\item the case of $D=3t_1+1t_2$:
\begin{gather}	\label{eq:31_theta}
	\theta = \sum_{n\geq -3} A_n u^n \otimes \d u \qquad \text{and} \qquad \theta = \sum_{n\geq -1} B_n v^n \otimes \d v,
\end{gather}

\item the case of $D=2t_1+1t_2+1t_3$ is not relevant: a simple
 geometric argument similar to Example~\ref{exam:F2} shows that in
 this case the elliptic fibration contains a fiber of type $I_1^*$ or
 $I_2^*$, and according to Corollary~\ref{cor:ins} this case can not
 lead to exactly two singular fibers,

\item the case of $D=1t_1+1t_2+1t_3+1t_4$. Near one of the $t_i$ ($i=1,2,3$) the local form is
\begin{subequations}	\label{eq:1111_theta}
	\begin{gather}	
	\theta = \sum_{n\geq -1} A_{n,i} (u-t_i)^n \otimes \d u
\end{gather}
and near $t_4=\infty$:
\begin{gather}	
	\theta= \sum_{n\geq -1} A_{n,4} v^n \otimes \d v,
	\end{gather}
\end{subequations}
where $A_{n}, B_n, A_{n,i} \in \gl (2,\C )$.
\end{itemize}

The coefficient $A_{-1,i}$ in above expression is called the
{\it residue} of $\theta$ at $t_i$, with respect to
the chosen trivialization of~$\E$. The residue of $\theta$ at~$t_i$
is denoted by $\res_{t_i} (\theta)$. It is a well-defined endomorphism
of~$\E|_{t_i}$ ($i=1,\dots,4$).

The spectral curve ($\chi_{\theta} (\zeta ) = 0$) has an expansion
near $t_i$ in which these parameters have a~geometric
meaning. The lowest index matrices~$A_n$,~$B_n$ and $A_{n,i}$ in equations~\eqref{eq:4_theta},~\eqref{eq:31_theta} and~\eqref{eq:1111_theta}
encode the base locus of a pencil.

Let us consider the local forms separated according to the number of poles and their multiplicities.

\subsubsection[$D=4 t_1$]{$\boldsymbol{D=4 t_1}$}\label{sssec:4}
The matrices $A_{-3}$, $A_{-2}$ and $A_{-1}$ in equation~\eqref{eq:4_theta} encode the tangent, the
second and third derivative of the curve in the pencil.
If the leading order term is a regular semi-simple endomorphism (untwisted case) then the local form in the trivialization $\kappa_U$
{of $K_{\CP1}$ and the chosen trivialization of $\E$} is
\begin{gather*}
 \t = \left[ \begin{pmatrix}
 a_+ & 0 \\
 0 & a_-
 \end{pmatrix} u^{-4}
				+
				\begin{pmatrix}
 b_+ & 0 \\
 0 & b_-
 \end{pmatrix} u^{-3}
				+
				\begin{pmatrix}
 c_+ & 0 \\
 0 & c_-
 \end{pmatrix} u^{-2}
				+
				\begin{pmatrix}
 \lambda_+ & 0 \\
 0 & \lambda_-
 \end{pmatrix} u^{-1}
				+ O(1)
 \right] \otimes \d u.
\end{gather*}
These matrices provide the parameters~\eqref{eq:untwisted-parameters} in Theorem~\ref{thm:OnePoleUntwisted}.

If the leading order term has non-vanishing nilpotent part (twisted
case) then the local form in the trivialization $\kappa_U$ is
\begin{gather*}
 \t = \left[ \begin{pmatrix}
 b_{-8}\! & 1 \\
 0	& \!b_{-8}
 \end{pmatrix} u^{-4}\!
 +
 \begin{pmatrix}
 0 & 0 \\
 b_{-7}\! & \!b_{-6}
 \end{pmatrix} u^{-3}\!
 +
 \begin{pmatrix}
 0 & 0 \\
 b_{-5}\! & \!b_{-4}
 \end{pmatrix} u^{-2}\!
 +
 \begin{pmatrix}
 0 & 0 \\
 b_{-3}\! & \!b_{-2}
 \end{pmatrix} u^{-1}
 + O(1)
 \right] \otimes \d u,
 \end{gather*}
with $b_{-7} \neq 0$. These matrices provide the parameters~\eqref{eq:twisted-parameters} in Theorem~\ref{thm:OnePoleTwisted}.

\subsubsection[$D=3t_1+1t_2$]{$\boldsymbol{D=3t_1+1t_2}$}\label{sssec:31}
The matrices $A_{-2}$ and $A_{-1}$ in equation~\eqref{eq:31_theta}
encode the slope of the tangent line of a pencil and the second
derivative of the curve in the pencil.

If the pole with multiplicity $3$ (i.e., $t_1$) is untwisted then the local form
in the trivializa\-tion~$\kappa_U$ (near $t_1$) and the chosen trivialization of~$\E$:
\begin{gather*}
 \t = \left[ \begin{pmatrix}
 a_+ & 0 \\
 0 & a_-
 \end{pmatrix} u^{-3}
				+
				\begin{pmatrix}
 b_+ & 0 \\
 0 & b_-
 \end{pmatrix} u^{-2}
				+
				\begin{pmatrix}
 \lambda_+ & 0 \\
 0 & \lambda_-
 \end{pmatrix} u^{-1}
				+ O(1)
 \right] \otimes \d u.
\end{gather*}
If the multiplicity $3$ pole is twisted, then
\begin{gather*}
 \t = \left[ \begin{pmatrix}
 b_{-6} & 1 \\
 0	& b_{-6}
 \end{pmatrix} u^{-3}
 +
 \begin{pmatrix}
 0 & 0 \\
 b_{-5} & b_{-4}
 \end{pmatrix} u^{-2}
 +
 \begin{pmatrix}
 0 & 0 \\
 b_{-3} & b_{-2}
 \end{pmatrix} u^{-1}
 + O(1)
 \right] \otimes \d u,
\end{gather*}
{with $b_{-5} \neq 0$.}

The matrix $B_{-1}$ contains the base locus of the pencil in the
trivialization $\kappa_V$ and the chosen trivialization of $\E$. If the pole $t_2$ is untwisted, then
\begin{gather*}
 \t = \left[ \begin{pmatrix}
 \mu_+ & 0 \\
 0 & \mu_-
 \end{pmatrix} v^{-1} + O(1)
 \right] \otimes \d v.
 \end{gather*}
If the pole $t_2$ is twisted, then up to holomorphic gauge transformation
\begin{gather*}
 \t = \left[ \begin{pmatrix}
 b_{-1} & \varepsilon \\
 0	& b_{-1}
 \end{pmatrix} v^{-1}
 + O(1)
 \right] \otimes \d v
\end{gather*}
for $\varepsilon \in \C$.
These parameters appear in Theorems~\ref{thm:TwoPolesUT} and~\ref{thm:TwoPolesTT}.

\subsubsection[$D=1t_1+1t_2+1t_3+1t_4$]{$\boldsymbol{D=1t_1+1t_2+1t_3+1t_4}$}\label{sssec:1111}
The matrices in equations~\eqref{eq:1111_theta} only encode the base
locus of the pencil in the four distinguished fibers at $t_1=0$, $t_2=1$,
$t_3=t$ and $t_4=\infty \in \CP{1}$.

If one of the poles $t_i$ ($i=1,\dots, 4$) is untwisted then the
matrix from the local form in the trivialization $\kappa_j$ ($j=U,V$) and
the chosen trivialization of $\E$ is the following:
\begin{gather}\label{eq:4poles_ut_u}
 A_{-1,i} = \begin{pmatrix}
 a_{+,i} & 0 \\
 0 & a_{-,i}
 \end{pmatrix},
\end{gather}
where we assume that $a_{+,i}\neq a_{-,i}$, otherwise a similar
 computation as in \cite[Lemma~5.6]{ISS2} shows that the fibration is
 not elliptic.

Similarly, if one of the poles $t_i$ ($i=1,\dots, 4$) is twisted then the
matrix has non-trivial nilpotent part. In the trivialization $\kappa_j$ ($j=U,V$) and the chosen trivialization of $\E$:
\begin{gather}\label{eq:4poles_t_u}
 A_{-1,i} = \begin{pmatrix}
 a_{i} & \varepsilon \\
 0 & a_{i}
 \end{pmatrix}
 \end{gather}
for $\varepsilon \in \C$.

For simplicity we introduce new notations for the parameters appearing above:
\begin{subequations}\label{eq:abcd}
\begin{gather}
a_\pm:= a_{\pm,1}, \\
b_\pm:= a_{\pm,2}, \\
c_\pm:= a_{\pm,3}, \\
d_\pm:= a_{\pm,4}, \\
a:= a_{1}, \\
b:= a_{2}, \\
c:= a_{3}, \\
d:= a_{4}.
\end{gather}
\end{subequations}

The parameters $a$, $b$, $c$ and $d$ which appear in the Theorem~\ref{thm:FourPoles} encode the base locus in the distinguished
fibers.

\section{Higgs bundles with non-reduced spectral curve}\label{s:parabolic}

Let $(\E , \theta)$ be a meromorphic Higgs bundle of rank $2$ over
$\CP1$ with divisor $D$. Parabolic stability for Higgs fields with
higher-order poles has been investigated in
\cite{ISS, ISS2}. In this subsection we assume that the multiplicity
$m_j$ is equal to $1$ for all $j$, i.e., that
\begin{gather*}
 D = t_1 + \cdots + t_4
\end{gather*}
for some distinct points $t_j$ ($j=1,2,3,4$).
\begin{defn}
 A {\it compatible quasi-parabolic structure} is the choice of a $1$-dimensional eigen\-space $\ell_j \subset \E|_{t_j}$ of $\res_{t_j} (\theta )$ for all $1 \leq j \leq 4$.
 A {\it compatible parabolic structure} consists of a~compatible quasi-parabolic structure and a choice of a pair of rational numbers $(\alpha_j^+, \alpha_j^-)$
 (called {\it parabolic weights}) satisfying
 \begin{gather*}
 0 \leq \alpha_j^- < \alpha_j^+ < 1
 \end{gather*}
 for each $j$. The {\it parabolic degree} of $\E$ is defined by the formula
 \begin{gather*}
 \pardeg (\E ) = \deg (\E ) + \sum_{j=1}^4 (\alpha_j^+ + \alpha_j^-).
 \end{gather*}
 A {\it rank $1$ meromorphic Higgs subbundle} of $(\E ,\theta )$ is a pair $(\F , \theta|_{\F})$ where $\F$ is a rank $1$ subbundle of $\E$ such that the restriction
 $\theta|_{\F}$ maps $\F$ into $\F \otimes K_{\CP1}(D)$.
 \end{defn}

Let $\big(\E , \theta, \{ \ell_j \}_{j=1}^4\big)$ be a meromorphic Higgs
 bundle of rank $2$ over $\CP1$ with a compatible quasi-parabolic
 structure, and with given parabolic weights $\{ (\alpha_j^+,
 \alpha_j^-) \}_{j=1}^4$. For any rank $1$ meromorphic Higgs
 subbundle $(\F , \theta|_{\F})$ of $(\E ,\theta )$, the fiber
 $\F|_{t_j}$ is preserved by $\res_{t_j}(\theta )$. If $\F|_{t_j} = \ell_j$, then we set
 \begin{gather*}
 \alpha_j(\F ) = \alpha_j^+,
 \end{gather*}
 otherwise we set
 \begin{gather*}
 \alpha_j(\F ) = \alpha_j^-.
 \end{gather*}
 Finally, we define
 \begin{gather*}
 \pardeg (\F ) = \deg (\F ) + \sum_{j=1}^4 \alpha_j(\F ).
 \end{gather*}

 \begin{defn} $\big(\E ,\theta , \{ \ell_j \}_{j=1}^4\big)$ is {\it $\vec{\alpha}$-stable} if for any rank $1$ meromorphic Higgs subbundle $(\F , \theta|_{\F})$ we have
 \begin{gather*}
 \pardeg (\F ) < \frac{\pardeg (\E )}2.
 \end{gather*}
 Moreover, $\big(\E ,\theta ,\{ \ell_j \}_{j=1}^4\big)$ is {\it $\vec{\alpha}$-semistable} if for any rank $1$ meromorphic Higgs subbundle $(\F , \theta|_{\F})$ we have
 \begin{gather*}
 \pardeg (\F ) \leq \frac{\pardeg (\E )}2.
 \end{gather*}
\end{defn}

Let $M^{\vec{\alpha}-s}$ and $M^{\vec{\alpha}-ss}$ be the functors
\begin{gather*}
 \mbox{Schemes} \to \mbox{Sets}
\end{gather*}
associating to a scheme $S$ the set of $S$-equivalence classes of holomorphic vector-bundles
\begin{gather*}
 \E \to \CP1 \times S
\end{gather*}
endowed with a morphism
\begin{gather*}
 \theta\colon \ \E \to \E \otimes p_1^* K_{\CP1}
\end{gather*}
(where $p_1\colon \CP1 \times S \to \CP1$ stands for the first projection) and with filtrations
\begin{gather*}
 \ell_j \subset \E|_{\{ t_j\} \times S},
\end{gather*}
where $\ell_j$ is a line-subbundle preserved by $\res_{\{ t_j\}
 \times S} (\theta )$, and such that for any geometric point $s \in
 S$ the triple $\big(\E , \theta, \{ \ell_j \}_{j=1}^4\big)$ defines an
 $\vec{\alpha}$-stable (respectively, $\vec{\alpha}$-semistable)
 meromorphic Higgs bundle of rank $2$ over $\CP1$ with a compatible
 parabolic structure {and local form of the Higgs field specified
 in Section~\ref{ssec:param_geo}; for simplicity of the notation,
 we do not include the parameters $a,b,\ldots $ in the notation of the functors,
 but we tacitly fix them.}

\begin{thm}[\cite{nit}] There exist quasi-projective coarse moduli schemes
 \begin{gather*}
 \Mod^{\vec{\alpha}-s} \big(\CP1, D, 2\big) \subseteq \Mod^{\vec{\alpha}-ss} \big(\CP1, D, 2\big)
 \end{gather*} for the functors $M^{\vec{\alpha}-s}$ and $M^{\vec{\alpha}-ss}$, respectively.
\end{thm}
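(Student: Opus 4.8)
The statement is Nitsure's existence theorem, and the plan is to realize both moduli schemes as Geometric Invariant Theory (GIT) quotients, adapting the construction of \cite{nit} to the parabolic and irregular situation at hand. The first step is \emph{boundedness}: one must show that the family of all $\vec\alpha$-semistable parabolic Higgs bundles $\big(\E,\t,\{\ell_j\}_{j=1}^4\big)$ of rank $2$ and fixed degree $d$, with the prescribed local form of $\t$ at the points of $D$, is bounded. Over $\CP1$ this reduces to bounding the possible Harder--Narasimhan types of the underlying bundle $\E$: $\vec\alpha$-semistability forces every $\t$-invariant line subbundle to have controlled degree, which gives a uniform bound on the instability of $\E$, so that all twists $\E(N)$ are globally generated with vanishing $H^1$ for a single sufficiently large integer $N$.

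With $N$ fixed and $h=h^0(\E(N))$ constant on the bounded family, every underlying bundle arises from a surjection $\O_{\CP1}(-N)^{\oplus h}\to\E$, hence determines a point of a Quot scheme $\mathcal Q$. I would then form a parameter scheme $R$ over $\mathcal Q$ carrying the extra data: the Higgs field $\t$, ranging over the linear subscheme of $\mathrm{Hom}(\E,\E\otimes K_{\CP1}(D))$ cut out by the prescribed polar conditions at the points of $D$; and the parabolic lines $\ell_j\subset\E|_{t_j}$, ranging over $\CP1$-flag bundles over $\mathcal Q$ and subject to the closed condition that each $\ell_j$ be an eigenline of $\res_{t_j}(\t)$. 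The group $G=\Gl(h,\C)$ acts on $R$ through change of the identification $\C^h\cong H^0(\E(N))$, and two points of $R$ lie in a single $G$-orbit exactly when they define isomorphic parabolic Higgs bundles. The weights $\vec\alpha$ enter through the choice of a $G$-linearized ample line bundle on $R$: one twists the natural Quot polarization by the weights $(\alpha_j^+,\alpha_j^-)$ along the flag factors, so that the Hilbert--Mumford numerical inequality will reproduce the weighted $\pardeg$ inequality of Section~\ref{s:parabolic}.

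The technical heart, and the step I expect to be the main obstacle, is to match GIT (semi)stability with the $\vec\alpha$-(semi)stability of Section~\ref{s:parabolic}. For each one-parameter subgroup $\lambda$ of $G$ one computes the Hilbert--Mumford weight $\mu(x,\lambda)$; via the standard dictionary between one-parameter subgroups and weighted filtrations of $\E$, and using that the Higgs condition restricts the relevant destabilizing subobjects to the $\t$-invariant rank $1$ subbundles $\F$, a direct calculation with the chosen linearization identifies the sign of $\mu(x,\lambda)$ with the sign of $\pardeg(\F)-\tfrac12\pardeg(\E)$. One must in addition check that for $N$ large this equivalence is exact, so that no spurious (semi)stable points are introduced by the $\O_{\CP1}(-N)^{\oplus h}$ presentation; this is where the precise choice of $N$ and a cohomology-vanishing argument are used. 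Once the matching is in place, Mumford's theory produces a good quotient $R^{ss}/\!\!/G$, quasi-projective over $\C$, which is the coarse moduli scheme $\Mod^{\vec\alpha-ss}\big(\CP1,D,2\big)$; the stable locus $R^s\subseteq R^{ss}$ admits a geometric quotient $R^s/G=\Mod^{\vec\alpha-s}\big(\CP1,D,2\big)$, and the open inclusion $R^s\subseteq R^{ss}$ descends to the asserted inclusion. That these quotients corepresent the functors $M^{\vec\alpha-s}$ and $M^{\vec\alpha-ss}$ then follows formally from the orbit description of $R$.
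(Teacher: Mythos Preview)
The paper does not give its own proof of this statement: the theorem is simply attributed to Nitsure via the citation \cite{nit}, with no argument supplied in the text. Your sketch correctly outlines the GIT strategy of that reference (boundedness, Quot-scheme parameter space enriched by Higgs and flag data, a $\Gl(h,\C)$-action with a weight-twisted linearization, and the Hilbert--Mumford identification of GIT stability with parabolic slope stability), so in substance you are reproducing exactly the approach the paper is invoking by citation. One small caveat: Nitsure's original paper treats $K$-twisted pairs without parabolic structure, so the parabolic and irregular refinements you describe are strictly speaking later extensions (e.g., by Yokogawa) rather than literally in \cite{nit}; but this is a bibliographic nicety, not a mathematical gap in your outline.
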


The Hitchin map introduced in Definition \ref{def:Hitchin_map} induces morphisms
\begin{gather*}
 h\colon \ {\overline{\Mod^{\vec{\alpha}-s} \big(\CP1, D, 2,d \big)}} \to \mathcal{B},
\end{gather*}
and for any $(F,G) \in \mathcal{B}$ we denote by
\begin{gather*}
 {\overline{\Mod^{\vec{\alpha}-s} \big(\CP1, D, 2,d \big)}}_{(F,G)}
\end{gather*}
its fiber over $(F,G)$.

In \cite{ISS, ISS2} we studied stability of Higgs bundles with spectral curves such that the curve in the associated elliptic fibration is of type $I_n$ $(1 \leq n \leq 3)$, $II$, $III$ or $IV$. The only new case that appears in the cases treated here is that of a spectral curve $\Sigma$ such that the curve in the associated elliptic fibration is of type $I_0^* = \tilde{D}_4$. As we will see in Lemma~\ref{lem:I0Fib}, this case only appears if the spectral curve $\Sigma$ is a section of $p\colon {\mathbb {F}}_2 \to \CP1$ taken with multiplicity two. From now on, we assume that $\Sigma$ is of this type, and in addition that
\begin{gather*}
 \pardeg (\E ) = 0.
\end{gather*}
Up to tensoring with a rank $1$ meromorphic Higgs bundle, the curve $\Sigma = \Sigma_{(0,0)}$ is the image of the $0$-section $F = 0$, $G = 0$.

 \begin{assn} From now on we only consider Higgs bundles satisfying
 \begin{gather*}
 \pardeg (\E ) = 0.
 \end{gather*}
 \end{assn}

This assumption implies that
\begin{gather*}
 \sum_{j=1}^4 (\alpha_j^+ + \alpha_j^-) \in \Z,
\end{gather*}
so the space of possible parabolic weights belong to a finite union of affine hyperplanes in $[0,1)^8$.
We will say that the parabolic weights are {\it generic} if for any choice of $\varepsilon_j \in \{ +, - \}$ we have
\begin{gather*}
 \sum_{j=1}^4 \alpha_j^{\varepsilon_j} \notin \Z.
\end{gather*}
Generic parabolic weights belong to the complement of finitely many proper affine subspaces of the affine hyperplanes of all possible parabolic weights.
For generic parabolic weights, $\vec{\alpha}$-semi-stability is equivalent to $\vec{\alpha}$-stability, because the parabolic degree of an invariant subobject
$\F$ is never an integer.

\begin{prop}\label{prop:Hitchin_fiber}
For generic parabolic weights the Hitchin fiber
\begin{gather*}
 \overline{\Mod_t^{\vec{\alpha}-s} \big(\CP1, D, 2, d\big)}_{(0,0)}
\end{gather*}
over $(0,0) \in \mathcal{B}$ is of type $I_0^*$.
\end{prop}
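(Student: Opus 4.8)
The plan is to describe the Hitchin fiber over $(0,0)$ as the moduli space of stable parabolic Higgs bundles with nilpotent Higgs field, and to match it with the Kodaira configuration $I_0^* = \tilde D_4$ through the spectral correspondence for the non-reduced curve. First I would note that $(F_\t, G_\t) = (0,0)$ forces $\t^2 = 0$, with nilpotent residues at each $t_j$ lying in the closure of the fixed orbit. Thus the spectral curve is $\Sigma_{(0,0)} = 2S$, the zero section $S \cong \CP1$ of $p$ taken with multiplicity two (Lemma~\ref{lem:I0Fib}), and the fiber is the moduli of rank~$1$ torsion-free sheaves on this non-reduced curve carrying the parabolic data, equivalently the moduli of the nilpotent pairs $(\E, \t)$ above.

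Next I would record the structure of such pairs. A nonzero nilpotent $\t$ has a saturated kernel line bundle $L = \ker \t$ and factors as $\E \to \E/L \xrightarrow{\bar\t} L \otimes K_{\CP1}(D) \hookrightarrow \E \otimes K_{\CP1}(D)$, where $\bar\t$ is a nonzero section of $L\otimes(\E/L)^{-1}\otimes K_{\CP1}(D)$. The residue $\res_{t_j}(\t)$ is nonzero exactly when $\bar\t(t_j) \neq 0$, in which case the parabolic line $\ell_j$ is forced to equal $L|_{t_j}$; when $\res_{t_j}(\t) = 0$ the line $\ell_j$ is free and ranges over $\mathbb{P}(\E|_{t_j}) \cong \CP1$. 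The assumption $\pardeg(\E) = 0$, together with genericity of the weights and $\vec\alpha$-stability, then pins down the admissible splitting types of $\E$ and leaves only finitely many strata.

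The key geometric point is that a smooth spectral curve $\Sigma_{(F,G)}$ is a double cover of $S \cong \CP1$ branched over the four zeros of the discriminant, hence an elliptic curve, and this is the generic Hitchin fiber; at $(0,0)$ the branch divisor degenerates and the double cover collapses onto $2S$. I would identify the five components accordingly. The $\t = 0$ locus is the moduli space of stable rank~$2$ parabolic bundles on $\big(\CP1, \{t_j\}\big)$, which for generic weights is a single rational curve $C_0 \cong \CP1$ (the zero section of the Hitchin map); it occurs with multiplicity~$2$, reflecting the non-reducedness $\Sigma_{(0,0)} = 2S$, i.e.\ the two sheets of the cover coming together. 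The four legs arise from the four poles: the \emph{wobbly} locus in $C_0$ (parabolic bundles admitting a nonzero nilpotent Higgs field) consists of exactly four points, matched with the $t_j$ through the branch points of the spectral double cover, and over each such point the nonzero nilpotent Higgs fields, scaled by $\C^\times$, sweep out a rational curve $C_j$ meeting $C_0$ transversally in one point. The four intersection points are distinct and no further components occur, so $2C_0 + C_1 + C_2 + C_3 + C_4$ is the incidence graph of $\tilde D_4$, a fiber of type $I_0^*$.

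The hard part will be the scheme-theoretic bookkeeping: establishing that $C_0$ carries multiplicity exactly~$2$ (not merely a single set-theoretic copy of the parabolic-bundle moduli), that precisely four legs occur and attach at four distinct points with transverse intersections, and that $\vec\alpha$-stability for generic weights leaves no strictly semistable objects and no spurious extra components. Controlling the multiplicity is most naturally done through the description of sheaves on the non-reduced curve $2S$, where the off-diagonal (nilpotent Higgs) deformations supply the required nilpotent thickening of $C_0$; cross-checking the Kodaira type against the Weierstrass discriminant of the induced elliptic fibration gives an independent verification of the $I_0^*$ structure.
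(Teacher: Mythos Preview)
Your approach is a direct moduli-theoretic description of the nilpotent cone, and it is genuinely different from what the paper does. The paper gives two short indirect arguments. First, it invokes smoothness and properness of the Hitchin map (via \cite{BB} and \cite{Sz_PW}) to know the central fiber is a Kodaira fiber, then observes that the family of spectral curves itself, analysed in Example~\ref{exam:F2}, has an $I_0^*$ degeneration; since the monodromy of $I_0^*$ is $-\mathrm{Id}$, the dual (Jacobian) fibration has the same monodromy, hence also an $I_0^*$ fiber. Second, alternatively, it cites Crawley-Boevey to identify an open subset of the moduli space with the $D_4^{(1)}$ quiver variety, i.e., the minimal resolution of a $D_4$ surface singularity; the exceptional $D_4$ configuration sits in the central Hitchin fiber, and the only Kodaira type containing a $D_4$ tree is $I_0^*$.

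Your route---classifying nilpotent parabolic Higgs bundles by the kernel line $L$, the induced map $\bar\theta$, and the vanishing pattern at the $t_j$---is in principle sound and yields a more explicit picture of the components. But several steps you flag as ``hard'' really are the crux and are not yet argued: that the wobbly locus in the moduli of stable parabolic bundles consists of exactly four reduced points; that each leg is an irreducible $\CP1$ (in particular, that the unstable-bundle strata contributing to nonzero nilpotent $\theta$ do not produce extra components or higher multiplicities); that the central component carries scheme-theoretic multiplicity exactly $2$; and that the attachment points are distinct and transverse. Your heuristic matching of the four legs with the four poles ``through the branch points of the spectral double cover'' is suggestive but not a proof---the legs are naturally indexed by choices of destabilising subbundle/parabolic incidence rather than directly by the $t_j$, and making this bijection precise requires a case analysis you have not supplied. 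By contrast, the paper's monodromy argument bypasses all of this bookkeeping at the cost of being less explicit about which curve in the fiber is which.
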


\begin{proof}It follows from~\cite[Theorem~0.2]{BB} that the moduli space in the assertion is a smooth complete hyperK\"ahler manifold.
Moreover, according to~\cite[Lemma~1]{Sz_PW}, the Hitchin map defined on the moduli space is proper.
It is well-known that the generic Hitchin fibers are elliptic curves; indeed, they are (up to a degree twist) the Jacobians of the elliptic curves
appearing in the pencil of spectral curves to be studied in Example~\ref{exam:F2}.
Now, the elliptic fibration of Example~\ref{exam:F2} admits a central fiber of type $I_0^*$, therefore the monodromy matrix
of the pencil of spectral curves is conjugate to
\begin{gather*}
 \begin{pmatrix}
 -1 & 0 \\
 0 & -1
 \end{pmatrix}.
\end{gather*}
It \looseness=-1 follows from this that the monodromy matrix of its dual fibration is also conjugate to the same matrix. As the monodromy matrix uniquely determines the Kodaira type of a~singular fiber in an elliptic surface, we deduce that the only singular fiber of the Hitchin map is of type~$I_0^*$ too.

Alternatively, it follows from~\cite{CB} that an open subset of the moduli space is a smooth quiver variety associated to the quiver~$D_4^{(1)}$ and
dimension vector equal to the positive minimal integer generator of the radical of the associated quadratic form.
This quiver variety is isomorphic to a~minimal resolution of the surface singularity of type~$D_4$.
The exceptional divisors of this resolution lie in the singular Hitchin fiber, so the singular Hitchin fiber contains a curve of type~$D_4$;
the only Kodaira fiber containing a curve of this type is~$I_0^*$.
\end{proof}

\section{Singular fibers in elliptic fibrations} \label{sec:elliptic}

An elliptic fibration is a map $\pi \colon X\to C$ where $X$ is a
compact complex surface, $C$ is a compact complex curve and the
generic fiber is an elliptic curve, i.e., smoothly diffeomorphic to
the 2-torus. In the following we will consider only those elliptic
surfaces which admit a section, i.e., \emph{basic elliptic surfaces} in
the terminology of \cite[Section~3.1.4]{FrMr}. This restriction is
justified by the existence of the Hitchin section~\cite{Hit_Teich}
in moduli spaces of Higgs bundles, whose construction can easily be
generalized to moduli spaces of (logarithmic or irregular) singular Higgs bundles.
Alternatively, it follows from \cite[Section~4]{Sz-spectral} that
the elliptic fibrations we will encounter all originate from a pencil
by blow-ups, hence all admit sections. The singular fibers in an
elliptic fibration have been classified by Kodaira in \cite{Kodaira}.
From Kodaira's list of singular fibers we will only need the fibers
$\Et8$, $\Et7$, $\Et6$, $II$, $III$, $IV$ and $I_n^*$ $(n\geq 0)$, see
Figs.~\ref{fig:regi} and~\ref{fig:34} for their description. (The
fiber of type $II$ is not given pictorially -- it is the cusp fiber,
which is topologically a sphere with a singular point, where the
singularity can be modeled by a cone on the trefoil knot. In $\CP{2}$
such a curve can be given in homogeneous coordinates $[x : y : z]$ by the equation $zy^2=x^3$.)

\begin{figure}[t]\centering
\includegraphics[width=10cm]{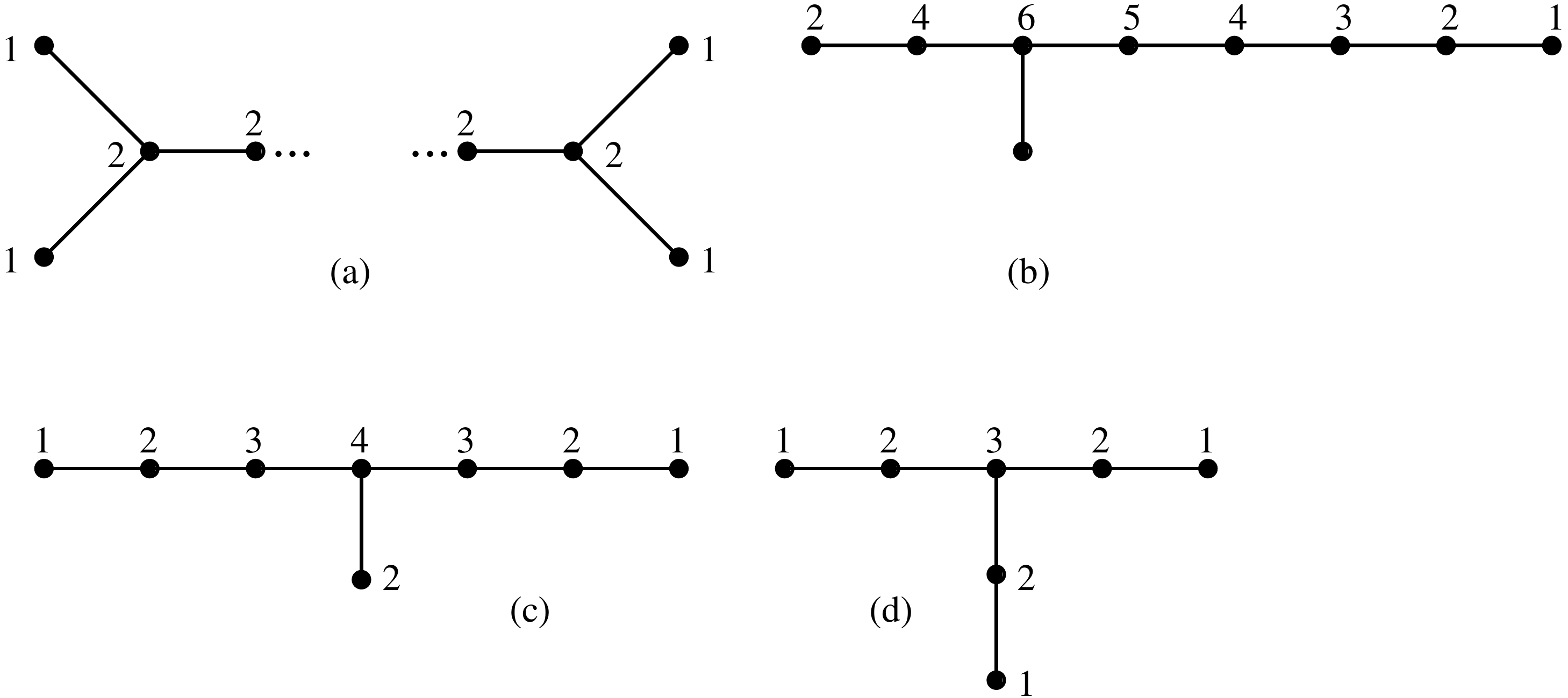}
\caption{Plumbings of singular fibers
of types (a) $I_n^*$, (b) ${\tilde {E}}_8$, (c) ${\tilde {E}}_7$, and
(d) ${\tilde {E}}_6$. Integers next to vertices indicate the
multiplicities of the corresponding homology classes in the fiber. All
dots correspond to rational curves with self-intersection $-2$. In
$I_n^*$ we have a total of $n+5$ vertices; in particular, $I_0^*$ admits
a vertex of valency four.}\label{fig:regi}
\end{figure}

\begin{figure}[t]\centering
\includegraphics[width=4.5cm]{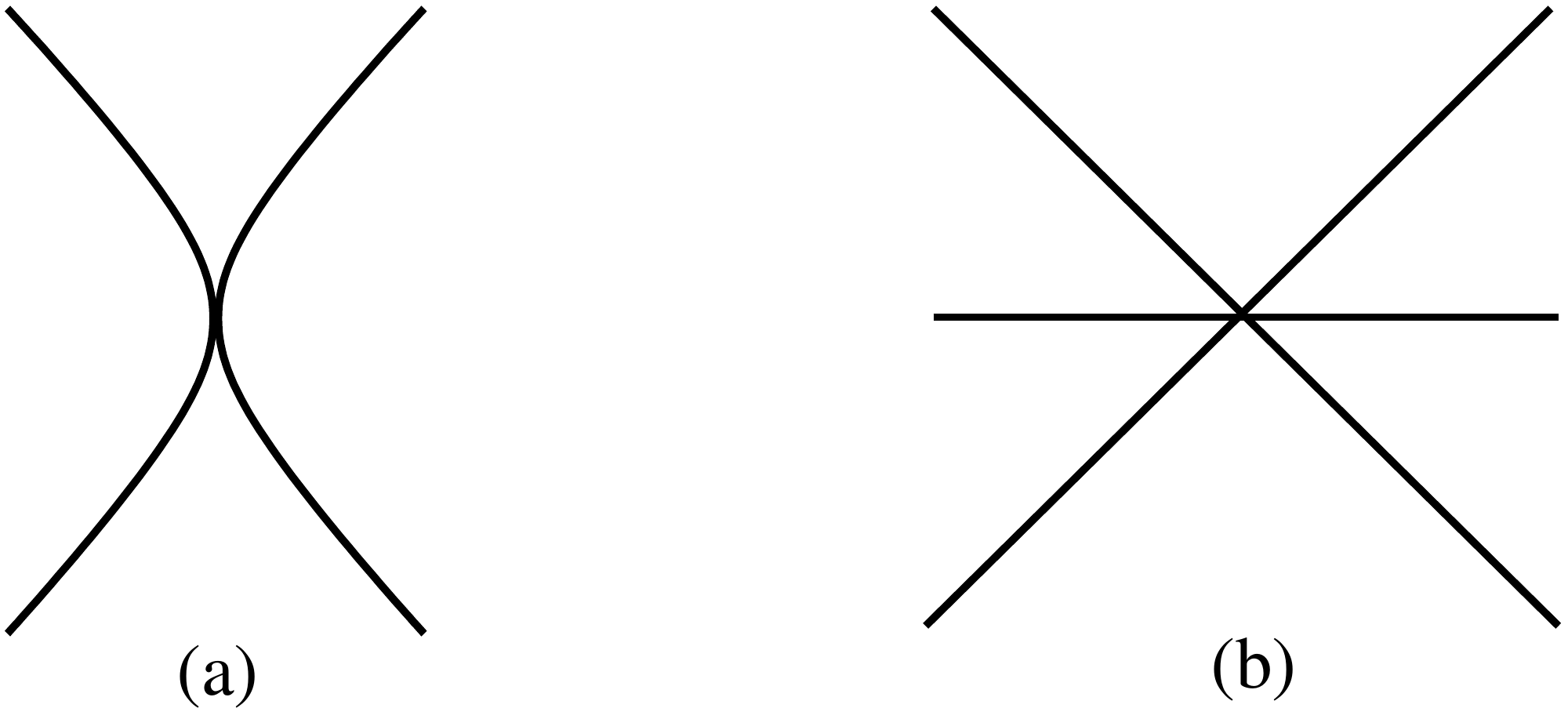}
\caption{Singular fibers of types $III$ and $IV$ in elliptic
 fibrations. In (a) the two curves are tangent with multiplicity
 two, and in (b) the three curves pass through one point and
 intersect each other there transversely. {All curves have
 multiplicity one and self-intersection $-2$.}}\label{fig:34}
\end{figure}

Various possibilities of singular fibers in an elliptic fibration on
a rational elliptic surface were analyzed in detail in \cite{Miranda, Persson, SSS}. In particular, we have

\begin{thm}[{\cite[Section~6]{SSS}}] Suppose that $\pi \colon X\to \CP{1}$ is an elliptic fibration on a
rational elliptic surface $X$. Assume furthermore that $\pi$ admits a
singular fiber of type $I_n^*$. Then $n\leq 4$. Furthermore, if $\pi
$ has exactly two singular fibers, and one of them is of type $I_n^*$,
then $n=0$ and both fibers are of type $I_0^*$.
\end{thm}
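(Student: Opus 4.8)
The plan is to argue entirely with two classical invariants of an elliptic fibration on a rational elliptic surface: the topological Euler number and the monodromy representation around the critical values. First I record the inputs. Since $X$ is a rational elliptic surface (diffeomorphic to $\CP{2}\#_9\CPbar$), its Euler number is $e(X)=12$; as smooth fibers are tori and contribute $0$, Euler-number additivity gives $\sum_i e(F_i)=12$, the sum running over the singular fibers. I will use the Kodaira values $e(I_m^*)=m+6$, $e(I_m)=m$, $e(II)=2$, $e(III)=3$, $e(IV)=4$, and $e(IV^*)=8$, $e(III^*)=9$, $e(II^*)=10$. The second input is that, $X$ being relatively minimal with a section, the local monodromies around the critical values define a representation of $\pi_1$ of $\CP{1}$ with those points removed; on a sphere the product of small loops around all punctures is null-homotopic, so for a suitable ordering the local monodromies satisfy $M_1\cdots M_k=I$ in $\mathrm{SL}(2,\Z)$. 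Finally I recall from Kodaira's table that the trace of the local monodromy equals $-2$ precisely for the fibers $I_m^*$ $(m\ge 0)$ and $+2$ precisely for $I_m$ $(m\ge 0)$, while the elliptic (finite-order) types $II$, $III$, $IV$ and their duals have traces $1$, $0$, $-1$.

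For the bound $n\le 4$ I would use the Picard number. On a rational elliptic surface the Picard number equals $b_2(X)=10$. The class of a general fiber, the class of the section, and, for each singular fiber $F_v$ with $m_v$ irreducible components, the $m_v-1$ classes of its non-identity components are linearly independent in $\mathrm{NS}(X)\otimes\mathbb{Q}$ (the latter spanning the negative-definite root lattice attached to $F_v$, orthogonal to fiber and section). Hence $2+\sum_v(m_v-1)\le 10$, i.e.\ $\sum_v(m_v-1)\le 8$. An $I_n^*$ fiber has $n+5$ components, contributing $n+4$ to this sum, so $n+4\le 8$ and $n\le 4$.

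For the second assertion, suppose there are exactly two singular fibers, over $p_1,p_2\in\CP{1}$, with $F_1$ of type $I_n^*$. Then $\pi_1(\CP{1}\setminus\{p_1,p_2\})\cong\Z$ and the relation above reads $M_1M_2=I$, so $M_2=M_1^{-1}$. The $I_n^*$ monodromy has trace $-2$, hence so does $M_2$, and by Kodaira's table $F_2$ must again be of type $I_m^*$ for some $m\ge 0$. Euler-number additivity then gives $(n+6)+(m+6)=12$, that is $n+m=0$; since $n,m\ge 0$ this forces $n=m=0$, so both fibers are of type $I_0^*$.

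The step needing the most care is the identification of $F_2$ from its monodromy. Monodromy does not determine the Kodaira type in general (for instance $II$ and $II^*$ share a monodromy conjugacy class), so I must invoke only the unambiguous fact that the value $-2$ of the trace isolates exactly the family $\{I_m^*\}_{m\ge 0}$ (the case $M_2=-I$ being $I_0^*$, and $M_2$ of infinite order being $I_m^*$ with $m\ge 1$); once $F_2$ is known to lie in this family, the Euler-number count closes the argument. One should also confirm at the outset that a rational elliptic surface with a section is relatively minimal and carries no multiple fibers, so that Kodaira's classification and the global monodromy relation apply verbatim.
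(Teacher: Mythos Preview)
Your argument is correct and self-contained. Note, however, that the present paper does not supply a proof of this statement: it is quoted from \cite[Section~6]{SSS} and used as a black box, so there is no in-paper proof to compare your approach against.

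On the substance: the Shioda--Tate bound $2+\sum_v(m_v-1)\le\rho(X)=10$ is the right tool for $n\le 4$ (the Euler-number bound alone would only give $n+6\le 12$, i.e., $n\le 6$), and for the two-fiber case your combination of the global monodromy relation $M_1M_2=I$ with the Euler-number identity $(n+6)+(m+6)=12$ is the standard route. One minor inaccuracy: your cautionary aside that $II$ and $II^*$ ``share a monodromy conjugacy class'' is not correct---for relatively minimal elliptic fibrations without multiple fibers the $\mathrm{SL}(2,\Z)$-conjugacy class of the local monodromy does determine the Kodaira type, so $II$ and $II^*$ are in fact distinguished. This has no bearing on your proof, though, since you rely only on the (true) fact that trace $-2$ singles out exactly the family $\{I_m^*\}_{m\ge 0}$, and that is all that is needed.
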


\begin{cor}\label{cor:ins}There is no elliptic fibration on the rational elliptic surface
with exactly two singular fibers, one of which is $I_n^*$ with $n>0$.
\end{cor}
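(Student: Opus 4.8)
The plan is to obtain this as an immediate consequence of the preceding theorem of \cite{SSS}; indeed the corollary is nothing more than the contrapositive of that theorem's second assertion. First I would set up a proof by contradiction, assuming that there exists an elliptic fibration $\pi\colon X \to \CP{1}$ on a rational elliptic surface $X$ with exactly two singular fibers, one of which is of type $I_n^*$ for some $n > 0$.

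Next I would invoke the cited theorem directly. Under the hypotheses that $X$ is a rational elliptic surface, that $\pi$ admits a fiber of type $I_n^*$, and that $\pi$ has exactly two singular fibers, the theorem forces $n = 0$ (with both singular fibers then of type $I_0^*$). Since this contradicts the standing assumption $n > 0$, no such fibration can exist, which is precisely the claim of the corollary.

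There is no genuine obstacle in this argument, as all of the substantive classification of singular fibers on a rational elliptic surface carrying an $I_n^*$ fiber is already packaged into the cited theorem. The only point to verify is that the hypotheses of the corollary match those of the theorem verbatim, namely a rational elliptic surface with exactly two singular fibers, one of which is of type $I_n^*$; this is indeed the case, so the deduction is immediate.
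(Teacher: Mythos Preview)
Your proposal is correct and matches the paper's approach: the paper states the corollary immediately after the cited theorem without any proof, treating it as an obvious consequence. Your argument by contradiction simply makes explicit what the paper leaves implicit.
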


We analyze the fibration with two $I_0^*$ fibers a little further.
We can construct such a fibration by considering the following
pencils.
\begin{Example}\label{exam:F2}
Consider the Hirzebruch surface ${\mathbb {F}}_2$ with infinity
section of square $(-2)$ and section of square $2$. Let $C_0$ be the
curve given by a section with multiplicity 2. Consider $C_{\infty}$ as
the union of the section at infinity with multiplicity 2 together
with 4 distinct fibers of the ruling on ${\mathbb {F}}_2$. It is easy
to see that the two curves are homologous, and indeed, can be given as
zero-sets of two sections of the same holomorphic line bundle, hence
there is a pencil of curves containing both. Blowing up the four
basepoints (which are the intersections of the fibers with the
section) twice, we get an elliptic fibration on the 8-fold blow-up of
${\mathbb {F}}_2$ (which is a rational elliptic surface), with two
singular fibers, each of type $I_0^*$. For the blow-up process, see
Fig.~\ref{fig:pot1}.
\end{Example}

\begin{figure}[t]\centering
\includegraphics{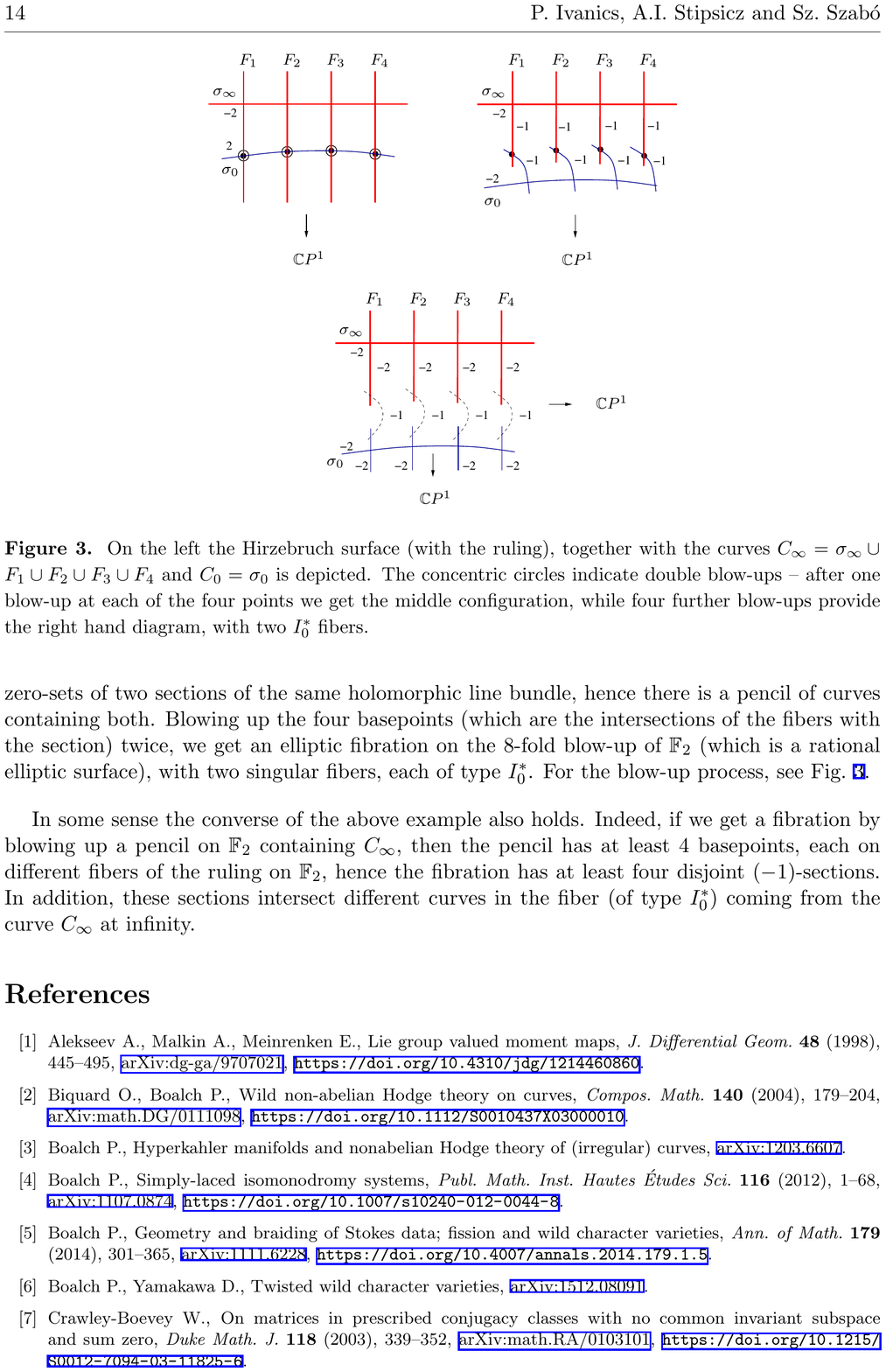}
\caption{On the left the Hirzebruch surface (with the ruling), together with the
curves $C_{\infty}=\sigma _{\infty}\cup F_1\cup F_2\cup F_3\cup F_4$
and $C_0=\sigma _0$ is depicted. The concentric circles indicate double
blow-ups -- after one blow-up at each of the four points we get the
middle configuration, while four further blow-ups provide
the right hand diagram, with two $I_0^*$ fibers.}\label{fig:pot1}
\end{figure}

In some sense the converse of the above example also holds.
Indeed, if we get a fibration by blowing up a pencil on
${\mathbb {F}}_2$ containing $C_{\infty}$, then the pencil has at least
4 basepoints, each on different fibers of the ruling on ${\mathbb {F}}_2$,
hence the fibration has at least four disjoint
$(-1)$-sections. In addition, these sections intersect different curves
in the fiber (of type~$I_0^*$) coming from the curve $C_{\infty}$ at
infinity.

\begin{lem}\label{lem:I0Fib}
Suppose that $\pi \colon X\to \CP{1}$ is an elliptic fibration with
two singular fibers $F_0$, $F_1$, both of type $I_0^*$, and the
fibration admits four disjoint sections $($of homological square~$(-1))$ which intersect different leaves of~$F_0$. Then this fibration
comes from the construction of Example~{\rm \ref{exam:F2}}.
\end{lem}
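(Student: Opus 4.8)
The plan is to reconstruct the pencil of Example~\ref{exam:F2} by contracting eight $(-1)$-curves on $X$, reversing the blow-up process described there; the fibration then manifestly arises from that construction.

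First I would record the local intersection pattern of the sections with the two fibers. Writing the fiber class as $f = 2C^{(i)} + \sum_{a=1}^4 L_a^{(i)}$ for $F_i$ ($i=0,1$), where $C^{(i)}$ is the central $(-2)$-curve of multiplicity $2$ and the $L_a^{(i)}$ are the four $(-2)$-leaves of multiplicity $1$, the relation $s_k\cdot f = 1$ together with effectivity forces each section $s_k$ to meet exactly one leaf of $F_i$ transversally and to be disjoint from the central curve $C^{(i)}$. By hypothesis the $s_k$ meet four distinct leaves of $F_0$; since there are four of each, after relabelling $s_k\cdot L_a^{(0)} = \delta_{ka}$, while the incidence of the $s_k$ with the leaves of $F_1$ is recorded by some map $\tau$, i.e.\ $s_k\cdot L_a^{(1)} = \delta_{a,\tau(k)}$, which is not yet under control.

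Next I would perform the contractions. The four disjoint $(-1)$-sections $s_1,\dots,s_4$ can be blown down; call the result $X_1$. In $X_1$ each leaf $L_k^{(0)}$ becomes a $(-1)$-curve, and these four remain pairwise disjoint (no two of them met a common section, by bijectivity over $F_0$), so they too can be contracted, yielding a smooth rational surface $Y$ with $b_2(Y)=2$, hence $Y\cong\mathbb{F}_n$ for some $n$. The central curve $C^{(1)}$ of $F_1$ is disjoint from every contracted curve and passes through none of the contraction centres, so it survives in $Y$ as an irreducible curve of self-intersection $-2$; the only Hirzebruch surface carrying such a curve is $\mathbb{F}_2$, so $Y\cong\mathbb{F}_2$ and $C^{(1)}$ is its negative section $\sigma_\infty$. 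Tracking $C^{(0)}$ (which met all four $L_k^{(0)}$) shows its image $\sigma_0$ acquires square $-2+4 = 2$, so $F_0$ pushes forward to $C_0 = 2\sigma_0$, a double section of square $2$.

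The crux is to identify the image of $F_1$. Since $F_0\sim F_1$ in $X$, their pushforwards are linearly equivalent on $\mathbb{F}_2$, so $C_\infty = 2\sigma_\infty + \sum_a \bar L_a^{(1)} \sim C_0 = 2\sigma_\infty + 4F$, giving $\sum_a \bar L_a^{(1)}\sim 4F$. Each $\bar L_a^{(1)}$ is irreducible (it is not among the contracted curves), so its class $\alpha_a\sigma_\infty + \beta_a F$ satisfies $\alpha_a\ge 0$; as $\sum_a\alpha_a = 0$ this forces every $\alpha_a = 0$, whence each $\bar L_a^{(1)}\sim F$ is a ruling fibre, and the base points $\sigma_0\cap \bar L_a^{(1)}$ are four distinct points lying on distinct fibres. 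Thus $C_\infty = 2\sigma_\infty + F_{i_1}+\dots+F_{i_4}$ with four distinct fibres, exactly the configuration of Example~\ref{exam:F2}; reversing the contractions (blowing up the four base points twice) recovers $X$ together with its fibration. The main obstacle is precisely this last identification: a priori the four sections could meet the leaves of $F_1$ non-bijectively (i.e.\ $\tau$ need not be a bijection), and it is the irreducibility of the $\bar L_a^{(1)}$ on $\mathbb{F}_2$ — equivalently the non-negativity of their $\sigma_\infty$-coefficients — that rules this out and forces the clean $I_0^*$ configuration at infinity.
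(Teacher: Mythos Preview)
Your argument is correct and takes a genuinely different route from the paper's proof. The paper argues by case analysis: letting $G_1^1$ be the leaf of $F_1$ hit by the most sections, it treats separately the possibilities that $G_1^1$ is met by $1$, $2$, $3$, or $4$ of the $s_k$, and in the last three cases derives a contradiction by blowing down selected curves and finding a nontrivial class of square~$0$ (or too many independent classes) in the orthogonal complement of a positive curve. Only after ruling out the non-bijective cases does the paper blow down to recover the configuration of Example~\ref{exam:F2}.

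You instead blow down the eight curves uniformly right away, identify the target as $\mathbb{F}_2$ via the surviving $(-2)$-curve $C^{(1)}$, and then let the Mori cone of $\mathbb{F}_2$ do the work: the constraint $\sum_a \bar{L}_a^{(1)} \sim 4F$ together with $\alpha_a \ge 0$ for irreducible curves forces each $\bar{L}_a^{(1)}$ to be a ruling fibre, which a posteriori shows $\tau$ is a bijection. This replaces the paper's four-way case split with a single cone argument, which is cleaner and more conceptual. The only step you leave slightly implicit is why ``reversing the contractions'' coincides with Example~\ref{exam:F2}'s double blow-up at the base points; this follows because the image of $L_k^{(0)}$ in $\mathbb{F}_2$ is exactly $\sigma_0 \cap \bar{L}_{\tau(k)}^{(1)}$ and the image of $s_k$ in $X_1$ is $L_k^{(0)} \cap L_{\tau(k)}^{(1)}$, the infinitely near base point (or, more slickly, because $X$ is relatively minimal over $\CP{1}$ and hence is the minimal resolution of the pencil's base locus). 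With that remark your proof is complete.
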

\begin{proof}
Let $G_i^1$, $G_i^2$, $G_i^3$ and $G_i^4$ denote the $(-2)$-curves in the
fiber $F_i$ (with $i=0,1$) correspon\-ding to the leaves of the plumbing
presentation of these fibers of type~$I_0^*$. Suppose furthermore
that the four sections are denoted by $E_1$, $E_2$, $E_3$ and $E_4$, with
the understanding that $E_j$ intersects $G_0^j$ (and no other
$G_0^k$), see Fig.~\ref{fig:pot3}.

\begin{figure}[t]\centering
\includegraphics{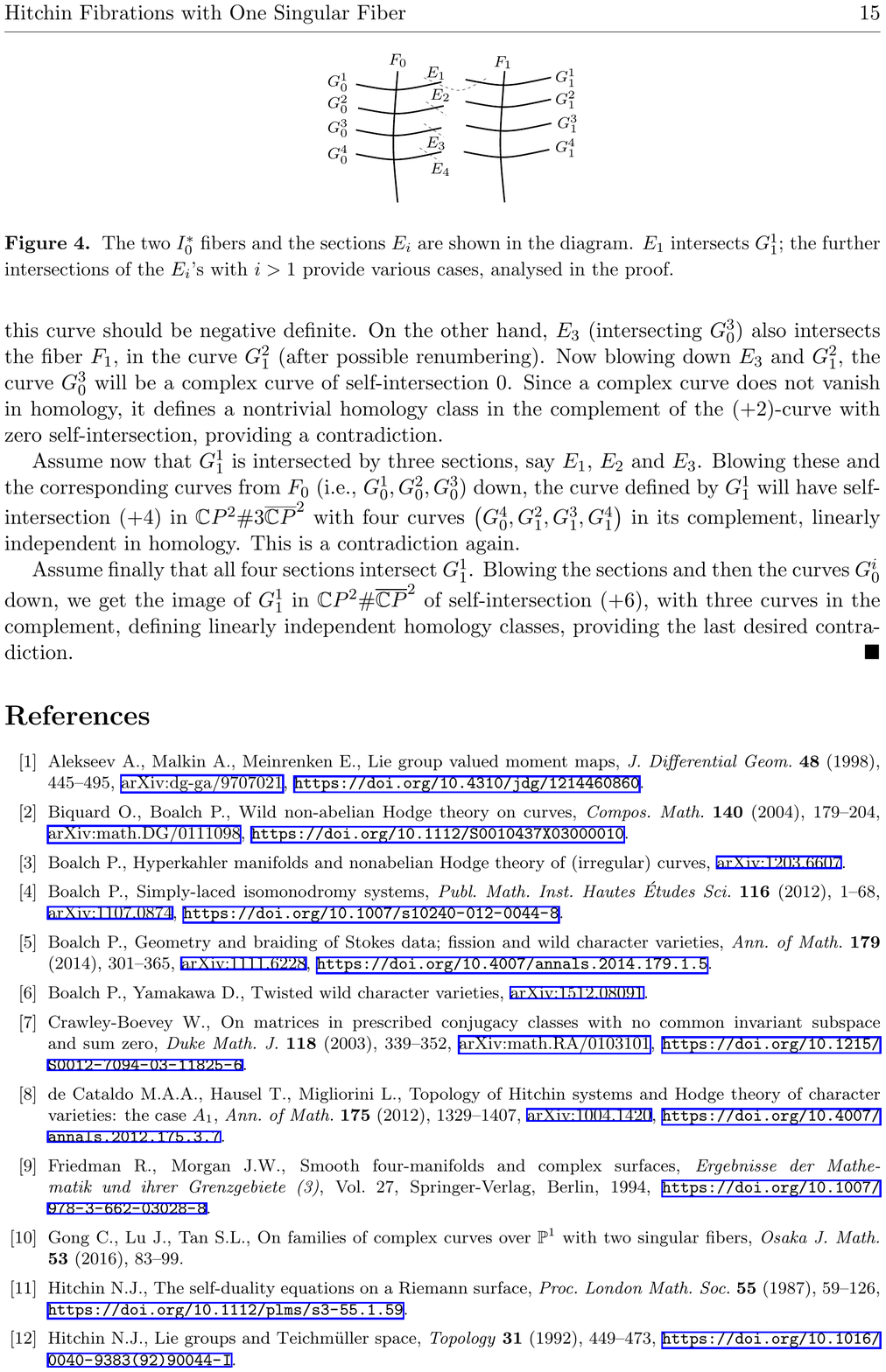}
\caption{The two $I_0^*$ fibers and the sections $E_i$ are
shown in the diagram. $E_1$ intersects $G_1^1$; the further intersections
of the $E_i$'s with $i>1$ provide various cases, analysed in the proof.}\label{fig:pot3}
\end{figure}

We can also assume (after possibly renumbering) that $G_1^1$
is the curve among the $G_1^j$'s intersected by the most~$E_i$'s. Note
that each section intersects a fiber of type $I_0^*$ only in curves
corresponding to leaves (since the central curve has multiplicity 2,
while a section intersects the fiber exactly once (with
multiplicity)).

If $G_1^1$ is intersected by a single $E_i$, then by our choice
each $G_1^j$ is intersected by a single section, hence blowing down
first the sections and then the images of the curves $G_1^j$'s,
we get the configuration of curves described in Example~\ref{exam:F2}.

Suppose now that $G_1^1$ is intersected by two $E_i$'s, say by $E_1$
and $E_2$. Blow these curves and~$G_0^1$,~$G_0^2$ down. In the result
the curve $G_1^1$ will be a $(+2)$-curve, hence the complement of this
curve should be negative definite. On the other hand, $E_3$
(intersecting $G_0^3$) also intersects the fiber~$F_1$, in the curve
$G_1^2$ (after possible renumbering). Now blowing down $E_3$ and
$G_1^2$, the curve $G_0^3$ will be a complex curve of
self-intersection 0. Since a complex curve does not vanish in
homology, it defines a nontrivial homology class in the complement of
the $(+2)$-curve with zero self-intersection, providing a~contradiction.

Assume now that $G_1^1$ is intersected by three sections, say $E_1$, $E_2$ and $E_3$. Blowing these and the corresponding curves from $F_0$
(i.e., $G_0^1$, $G_0^2$, $G_0^3$) down, the curve defined by~$G_1^1$ will have self-intersection $(+4)$ in $\CP{2}\# 3\CPbar$ with four curves $\big(G_0^4, G_1^2, G_1^3, G_1^4\big)$ in its complement, linearly independent in homology. This is a~contradiction again.

Assume finally that all four sections intersect $G_1^1$. Blowing the sections and then the curves~$G_0^i$ down, we get the image of $G_1^1$ in $\CP{2}\# \CPbar$ of self-intersection $(+6)$, with three curves in the complement, defining linearly independent homology classes, providing the last desired contradiction.
\end{proof}

\begin{Example}\label{exam:CP2I0}
In a similar manner, we can specify
a pencil of curves in the complex projective plane which (after 9 blow-ups)
provides an elliptic fibration with two singular fibers, both of type~$I_0^*$. Indeed, let $S_{\infty}=\cup _{i=1}^3\ell _i$ be the union of three lines
$\ell _1$, $\ell _2$, $\ell _3$ passing through
a~fixed point $P\in \CP{2}$, each line with multiplicity one.
Define $S_0$ to be the union $L_1\cup L_2$, where $L_1$ passes through
$P$ while $L_2$ does not; we consider~$L_1$ with multiplicity~1 and~$L_2$ with multiplicity~2, see Fig.~\ref{fig:pot2}.
\begin{figure}[t] \centering
\includegraphics[scale=0.8]{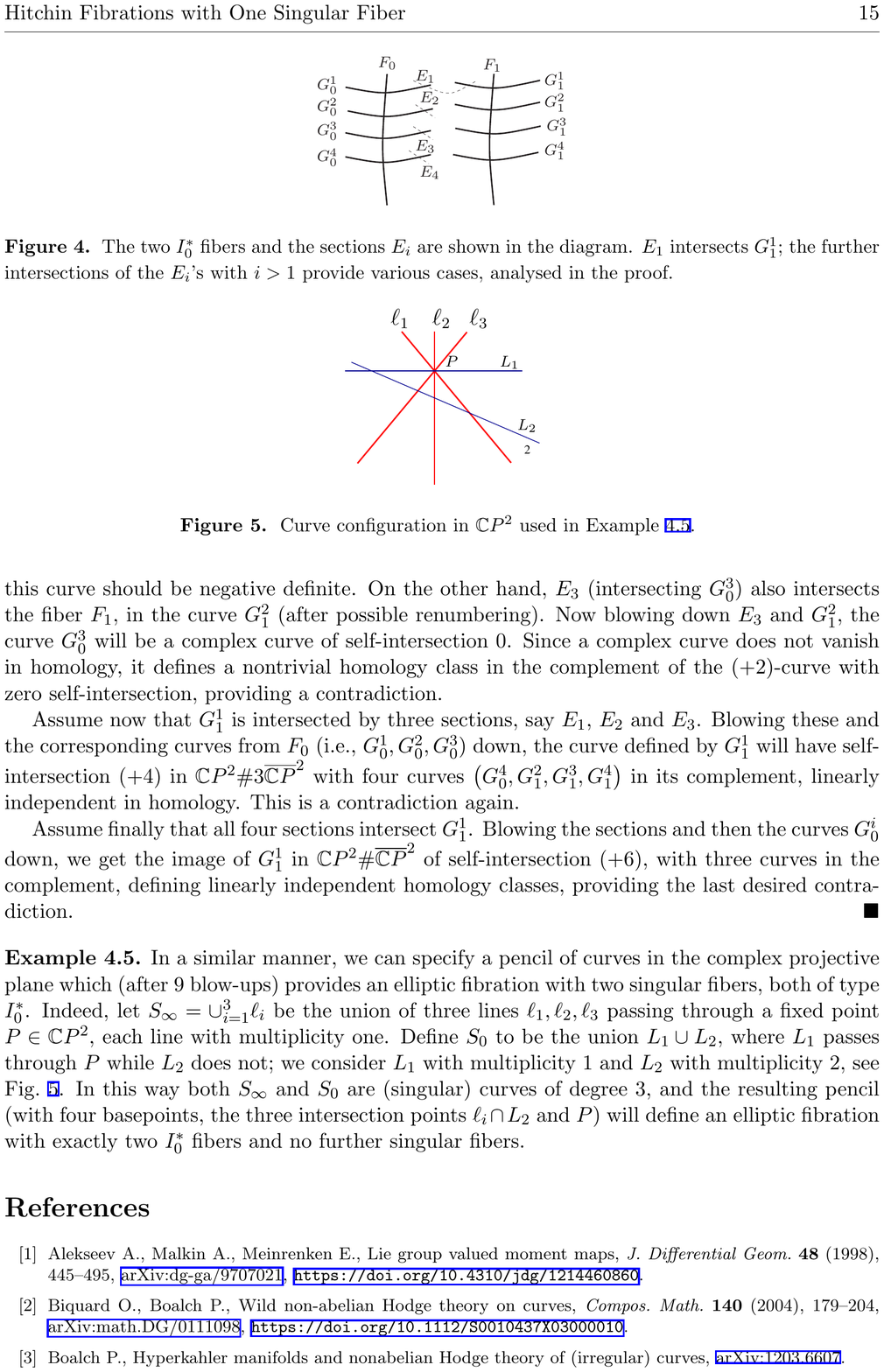}
\caption{Curve configuration in ${\mathbb {C}}P^2$ used in Example~\ref{exam:CP2I0}. }\label{fig:pot2}
\end{figure}
In this way both $S_{\infty}$ and $S_0$ are (singular) curves of degree 3, and the resulting pencil (with four basepoints,
the three intersection points $\ell _i\cap L_2$ and $P$) will define an elliptic fibration with exactly two $I_0^*$ fibers and no further singular fibers.
\end{Example}

\begin{Example}
A more symmetric presentation of the same fibration can be given as
follows. Consider $\CP{1}\times \CP{1}$ (which is the Hirzebruch
surface ${\mathbb {F}}_0$), together with the ruling $r\colon
\CP{1}\times \CP{1}\allowbreak \to \CP{1}$. Choose two sections $S_0$ and
$S_{\infty}$ of the ruling, and four distinct fibers $F_1$, $F_2$, $F_3$,~$F_4$.
Define the two curves $C_0$, $C_{\infty}$ by
\[
C_0=S_0\cup F_1\cup F_2, \qquad C_{\infty}=S_{\infty }\cup F_3\cup F_4,
\]
where each $F_i$ appears with multiplicity 1, while both $S_0$ and $S_{\infty}$
appear with multiplicity 2, see Fig.~\ref{fig:pot4}.
\begin{figure}[t] \centering
\includegraphics{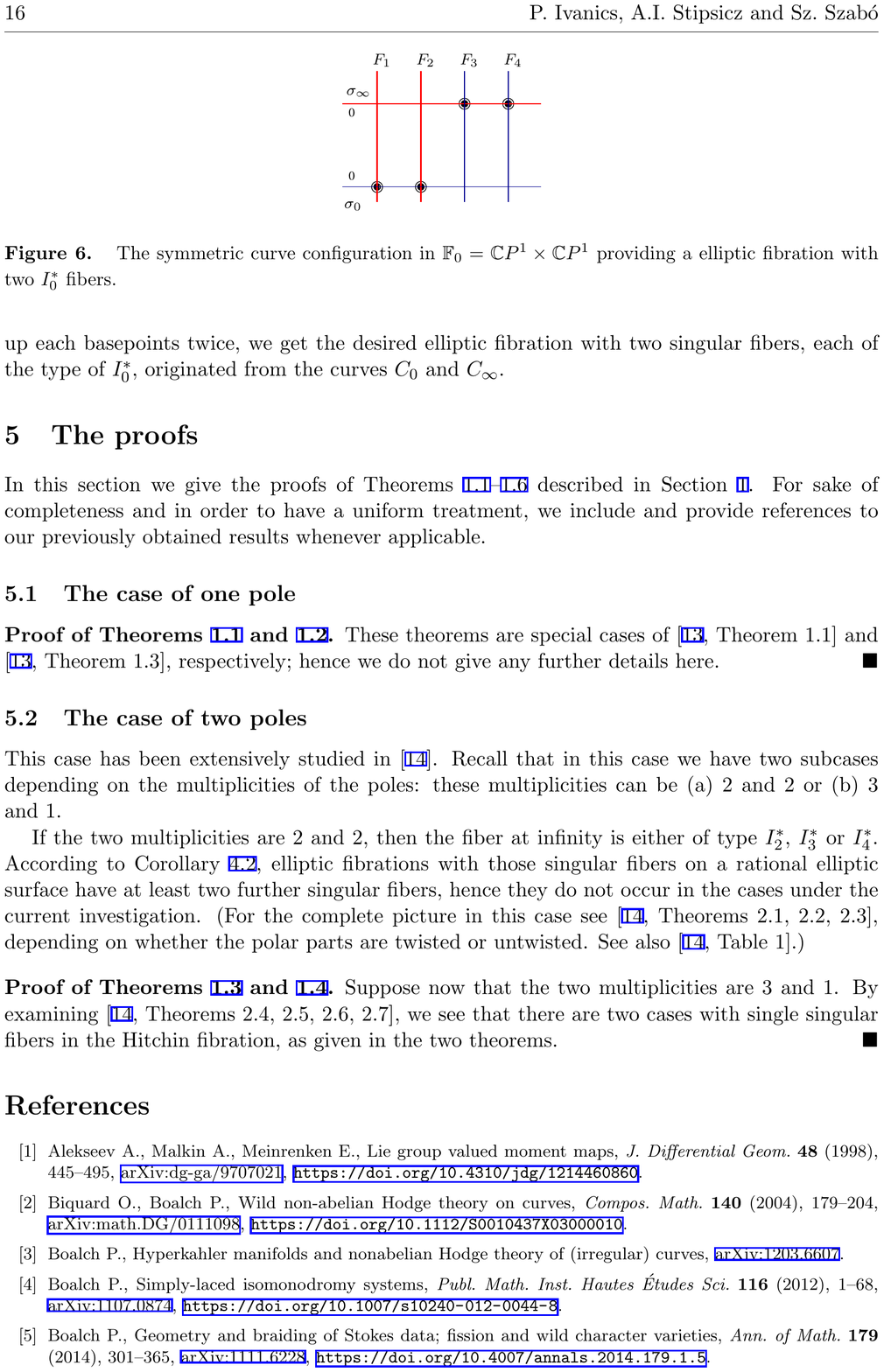}
\caption{The symmetric curve configuration in ${\mathbb {F}}_0={\mathbb {C}}P^1\times {\mathbb {C}}P^1$ providing a elliptic fibration with two $I_0^*$ fibers.}\label{fig:pot4}
\end{figure}
The two curves obviously generate a pencil on $\CP{1}\times \CP{1}$ with
four basepoints. Blowing up each basepoints twice, we get the desired
elliptic fibration with two singular fibers, each of the type of $I_0^*$,
originated from the curves $C_0$ and $C_{\infty}$.
\end{Example}

\section{The proofs}\label{sec:proofs}
In this section we give the proofs of
Theorems~\ref{thm:OnePoleUntwisted}--\ref{thm:FourPoles}. 
For sake of completeness and in order to have a uniform treatment, we include and provide references to our previously obtained results whenever applicable.

\subsection{The case of one pole}
\begin{proof}[Proof of Theorems~\ref{thm:OnePoleUntwisted} and \ref{thm:OnePoleTwisted}]
These theorems are special cases of \cite[Theorem~1.1]{ISS} and
\cite[Theorem~1.3]{ISS}, respectively; hence we do not give any
further details here.
\end{proof}

\subsection{The case of two poles}
This case has been extensively studied in \cite{ISS2}. Recall that in this
case we have two subcases depending on the multiplicities of the poles:
these multiplicities can be~(a)~2 and~2 or (b)~3 and~1.

If the two multiplicities are 2 and 2, then the fiber at infinity is
either of type $I_2^*$, $I_3^*$ or $I_4^*$. According to
Corollary~\ref{cor:ins}, elliptic fibrations with those singular fibers
on a rational elliptic surface have at least two further singular fibers,
hence they do not occur in the cases under the current investigation.
(For the complete picture in this case see
\cite[Theorems~{2.1,~2.2,~2.3}]{ISS2}, depending on whether the polar
parts are twisted or untwisted. See also \cite[Table~1]{ISS2}.)

\begin{proof}[Proof of Theorems~\ref{thm:TwoPolesUT} and~\ref{thm:TwoPolesTT}]
Suppose now that the two mul\-ti\-pli\-ci\-ties are 3 and 1. By
examining \cite[Theorems~{2.4,~2.5,~2.6,~2.7}]{ISS2}, we see that there
are two cases with single singular fibers in the Hitchin fibration,
as given in the two theorems.
\end{proof}

\subsection{The case of three poles}
Suppose now that there are three poles. In this case the fiber at
infinity in the elliptic fibration is either of type $I_1^*$ (when the
polar part at the pole with multiplicity 2 is untwisted (i.e., regular
semi-simple)) or of type $I_2^*$ (when this polar part is twisted, i.e., has non-vanishing nilpotent part).

\begin{proof}[Proof of Theorem~\ref{thm:ThreePoles}]
According to Corollary~\ref{cor:ins}, next to these singular fibers
we cannot have a single singular fiber in any elliptic fibration on a
rational elliptic surface, verifying the statement of the theorem.
\end{proof}

\subsection{The case of four poles}
\begin{proof}[Proof of Theorem~\ref{thm:FourPoles}]
Consider the curve $C_{\infty}\subset {\mathbb {F}}_2$ which is the union
of the section at infinity (with self-intersection $(-2)$) with multiplicity
2 and four fibers of the ruling on ${\mathbb {F}}_2$. We can choose
coordinates on $\CP{1}$ in such a way that the four fibers are
over $0$, $1$, $\infty$ and~$t$, where $t\in \C$ described the holomorphic type
of the fiber at infinity.

By Lemma~\ref{lem:I0Fib} the pencil containing the above curve
provides an elliptic fibration with two~$I_0^*$ fibers if and only if
the pencil contains a section of the ruling $p\colon {\mathbb
 {F}}_2\to \CP{1}$ (which is of multiplicity~2).
 {By virtue of Proposition~\ref{prop:Hitchin_fiber}, in this case
 the Hitchin fibration on the extended moduli space $\overline{\Mod}$
 of~\eqref{eq:compactification} also has a singular fiber of type $I_0^*$
 for generic parabolic weights.}
 In particular, the
pencil has four basepoints, one on each of the four distinguished fibers
of the ruling of ${\mathbb {F}}_2$. Suppose that the four basepoints
are at $a$, $b$, $c$, $d$ in the fibers over the points $0,1,t,\infty \in
\CP{1}$.

The ruling $p\colon {\mathbb {F}}_2\to \CP{1}$ admits a complex
3-dimensional family of sections (all disjoint from the section at
infinity), which can be given by homogeneous degree-2 polynomials on
$\CP{1}$. In particular, if the homogeneous coordinates on $\CP{1}$
are denoted by $[u:v]$, then a section is of the form $\alpha
v^2+\beta uv+\gamma u^2$. The section intersects the fiber over $0$
(given by the homogeneous coordinates $[0:1]$) at $\alpha$,
hence we need $\alpha =a$. Similarly, over $\infty=[1:0]$
the intersection point is $\gamma$, so we have $\gamma =d$.
Finally, over the point $1=[1:1]\in \CP{1}$ we get that
$\alpha +\beta +\gamma =b$, implying $\beta =b-a-d$.
In conclusion, the unique section intersecting the
fibers over $0$, $1$, $\infty$ in the prescribed points $a$, $b$, $d$
can be given by the equation
\[
av^2+(b-a-d)uv+du^2.
\]

This section takes the value
$a+(b-a-d)t+dt^2$ at $t=[t:1]$, providing the condition
\[
c= a+(b-a-d)t+dt^2
\]
for the existence of the desired section (and therefore for the other
fiber to be of type $I_0^*$), and concluding the proof of the statement.
\end{proof}
{This argument concludes the proof of Theorems~\ref{thm:OnePoleUntwisted}--\ref{thm:FourPoles}. We are left
 with Theorem~\ref{thm:I0*osszeseset}, the proof of which will be
 provided in the next section.}

\section[Pencils and fibrations with fiber $I_0^*$]{Pencils and fibrations with fiber $\boldsymbol{I_0^*}$}\label{sec:app}

We devote this section to the proof of
Theorem~\ref{thm:I0*osszeseset}. We will describe pencils in the
Hirzebruch surface ${\mathbb {F}}_2$, and sometimes (for the sake of
convenience) we also provide a pencil in the complex projective plane
giving rise to a fibration with the same desired properties. There is a
rather simple procedure for converting pencils in ${\mathbb {F}}_2$ to
pencils in $\CP{2}$ (by blowing up and down); in most cases we will
omit the actual steps of this procedure.

The possible combinations of singular fibers in elliptic fibrations on
a rational elliptic surface have been determined in \cite{Miranda, Persson},
see also \cite{SSS}. Recall the types of elliptic singular
fibers listed in Section~\ref{sec:elliptic}; there are a few further
types we need to encounter in the following. $I_1$ denotes the nodal
fiber (also called a fishtail fiber), which is a nodal elliptic curve,
and topologically an immersed 2-sphere with one positive transverse
double point. The fiber~$I_n$ consists of $n$ rational curves~$E_0,
\ldots, E_{n-1}$ such that $E_i$ transversely intersects~$E_{i-1}$ and
$E_{i+1}$ in two distinct points (one each), and the indices are viewed mod
$n$. In particular, $I_2$ consists of two rational curves~$E_0$ and~$E_1$ intersecting each other in two distinct points~-- as the two points
converge to each other, the limit of such a sequence of singular
fibers is a type~$III$ fiber.

There are 19 distinct configurations of singular fibers
on $\CP{2}\# 9 {\overline {\CP{2}}}$ containing $I_0^*$ as one of
the singular fiber, which
we list now (separating the singular fibers in a fibration with an
addition sign), grouped in four groups. Below we list the fibers
next to the (always existing) $I_0^*$ fiber:

\begin{table}[h!]\centering
\begin{tabular}{llllll}
$I_0^*$,& $I_4+2I_1$,& $IV+II$,& $IV+2I_1$, & $I_3+II+I_1$,& $I_3+3I_1$; \\
$2III$,& $III+I_2+I_1$,& $III+II+I_1$,& $III+3I_1$; & &\\
$3I_2$,& $2I_2+2I_1$,& $I_2+2II$,& $I_2 +II +2I_1$,& $I_2+4I_1$; & \\
$3II$,& $2II+2I_1$,& $II+4I_1$,& $6I_1$. & &
\end{tabular}
\caption{Possible singular fiber combinations next to $I_0^*$.}\label{tab:singfibers}
\end{table}

In the following we give examples of pencils of curves in ${\mathbb
 {F}}_2$ (and sometimes in $\CP{2}$) providing elliptic fibrations on rational
elliptic surfaces containing the configurations listed above.
The curves are chosen so that we do have control on the further
singular fibers of the fibration. In the following we will describe
pencils in ${\mathbb {F}}_2$ and in $\CP{2}$ by specifying two curves
(in each complex surface) and consider the pencil generated by them~-- and the singular fibers will then be in the fibrations we get by
blowing up {the base points of} these pencils.

The curve $C_{\infty}\subset
{\mathbb {F}}_2$ will be part of all the pencils; $C_{\infty}$
consists of the union of four fibers $F_1$, $F_2$, $F_3$, $F_4$ of the ruling
${\mathbb {F}}_2\to \CP{1}$ (all with multiplicity~1) and the section
at infinity of the ruling (this curve with multiplicity~2)~-- as it
was already described in Example~\ref{exam:F2}.
Similarly, one curve in each pencil in~$\CP{2}$ will be the same:
$S_{\infty}=\ell _1\cup \ell _2\cup \ell _3$ is the cubic curve we get
by taking the union of three projective lines $\ell _i$ ($i=1,2,3$)
(all with multiplicity~1) passing through a fixed point $P\in
\CP{2}$. The curve $S_{\infty}$ is shown by the thick red lines on the right of
the figures below. (This curve already appeared in Example~\ref{exam:CP2I0}.)

We say that a pencil in ${\mathbb {F}}_2$ \emph{has a section}
if one of the curves in the pencil other than $C_{\infty}$
has more than one components~-- in this case each component
of this curve is indeed a~section of the ruling of
the Hirzebruch surface~${\mathbb {F}}_2$.

In some cases we need to see more than two singular fibers in the
fibration. To this end, we will apply a principle (formulated in
Lemma~\ref{lem:princ} below) and a method of generating more curves in
a pencil (described after Lemma~\ref{lem:princ}).

Suppose that the pencil is given by the two curves $C_0$ and
$C_{\infty}$ in ${\mathbb {F}}_2$ (or $S_0$ and $S_{\infty}$ in
$\CP{2}$), intersecting each other in points $P_1, \ldots , P_k$ with
multiplicities $n_1, \ldots ,n_k$, satisfying $\sum_{i=1}^kn_i=8$ in
${\mathbb {F}}_2$ and $\sum_{i=1}^kn_i=9$ in $\CP{2}$. (In case
$n_i>1$, not only $P_i$ is fixed, but also the higher order tangencies
are part of the given data, for example by specifying the points in
the exceptional divisor of the blow-up the strict transforms of the
curves pass through.) We mentioned that the elements of matrices in
equations~\eqref{eq:4poles_ut_u} and~\eqref{eq:4poles_t_u} encode the
base locus of the pencil, hence $P_1, \ldots , P_k$ are determined by
these data. Next, we will construct some examples where we choose
these parameters $P_1, \ldots , P_k$ appropriately on four
distinguished fibers of the ruling, so that all combinations of
further singular fibers appear. It is easy to see that these chosen
parameters can be transformed into $a_{\pm}, \dots, d_{\pm}, a, \dots,
d$. (For convenience, from now on we will not insist on having the
distinguished fibers over the points~$0$, $1$, $\infty$~-- the application
of a simple M\"obius transformation would provide this extra feature.)

Suppose now that $C$ (or $S$) is a complex curve in ${\mathbb {F}}_2$
(or in $\CP{2}$, respectively) which is homologous to $C_0$ and
$C_{\infty}$ (or $S_0$ and $S_{\infty}$) and passes through~$P_1,
\ldots , P_k$, intersecting $C_0$ (or~$S_0$) in those points with
multiplicities $n_1, \ldots ,n_k$ (and with the required higher order
tangencies in case $n_i>1$).

\begin{lem} \label{lem:princ} Under the above circumstances $C$ $($or~$S)$ is in the pencil in ${\mathbb {F}}_2$ generated by $C_0$ and $C_{\infty}$ $($or $S_0$ and $S_{\infty}$ in case we work in $\CP{2})$.
\end{lem}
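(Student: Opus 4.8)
The plan is to exhibit the pencil as the family of fibers of a rational map to $\CP{1}$ and to show that the hypothesized curve $C$ is one of these fibers. Since ${\mathbb {F}}_2$ and $\CP{2}$ are rational, homological and linear equivalence coincide, so $C_0$, $C_\infty$ and $C$ are the zero-divisors of sections $s_0$, $s_\infty$, $s$ of a single line bundle $L$ (here $L=\O(3)$ in the $\CP{2}$ case, and $L=-K_{{\mathbb {F}}_2}$ in the Hirzebruch case, so $L^2=\sum_i n_i$). The sections $s_0,s_\infty$ are linearly independent and span the pencil. I would set $f=s_\infty/s_0$ and $h=s/s_0$, rational functions whose divisors are $(f)=C_\infty-C_0$ and $(h)=C-C_0$; the rational map $f$ to $\CP{1}$ has the members of the pencil as its fibers, with $C_0$ the fiber over $\infty$. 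The goal is to prove that $h$ is an affine-linear function of $f$, i.e., $s=\alpha s_\infty+\beta s_0$ for constants $\alpha,\beta$, which exhibits $C$ as a member of the pencil.

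First I would restrict everything to a generic member $C_\mu$ of the pencil. After blowing up the base scheme $Z=C_0\cap C_\infty$ (of length $\sum_i n_i=L^2$) the pencil becomes the elliptic fibration of Example~\ref{exam:F2}, whose generic fiber is smooth and hence irreducible and connected; thus for generic $\mu$ the curve $C_\mu$ is irreducible, shares no component with $C_0$ or $C$, and $f$ is constant on it. It then suffices to show $h$ is constant on $C_\mu$. The divisor of $h|_{C_\mu}$ is the $0$-cycle $(C\cap C_\mu)-(C_0\cap C_\mu)$ on $C_\mu$, and both terms are supported on the base points $P_i$: the member $C_0$ meets $C_\mu$ only along $Z$, and — this is where the hypothesis is essential — the assumption that $C$ passes through $P_1,\dots,P_k$ with multiplicities $n_i$ \emph{and the prescribed higher-order tangencies} forces $C$ to have local intersection multiplicity at least $n_i$ with every $C_\mu$ at $P_i$. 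Since $\sum_i n_i=L^2=C\cdot C_\mu$, these local multiplicities are exactly $n_i$ and $C$ meets $C_\mu$ nowhere else. Hence $h|_{C_\mu}$ has neither zeros nor poles, and as $C_\mu$ is connected it is constant.

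As this holds for generic $\mu$, the function $h$ is constant along the generic fiber of $f$, so $h=R(f)$ for some rational function $R$ on $\CP{1}$. Comparing polar divisors then finishes the argument: assuming, as we may, that $C$ shares no component with $C_0$, the poles of both $h$ and $f$ lie exactly along $C_0=\{f=\infty\}$, with pole order along each component equal to its multiplicity in $C_0$; since these orders agree for $h$ and $f$, the function $R$ is regular away from $\infty$ and has a single simple pole there. A rational self-map of $\CP{1}$ with one simple pole and no finite pole is affine linear, so $R(f)=\alpha f+\beta$, giving $s=\alpha s_\infty+\beta s_0$ and placing $C$ in the pencil. The $\CP{2}$ case is verbatim the same with $\sum_i n_i=9$. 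The main obstacle is the second step: one must argue carefully that the recorded tangency data at the multiple base points make the local intersection of $C$ with a generic member equal to $n_i$, so that $h|_{C_\mu}$ is genuinely free of zeros and poles.
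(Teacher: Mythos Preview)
Your argument is correct, and its core is exactly the intersection-counting observation that also drives the paper's proof: once $C$ is required to pass through the base scheme $Z$ (with the specified higher-order tangencies), its local intersection with any pencil member $C_\mu$ at the $P_i$ already accounts for all of $C\cdot C_\mu=L^2=\sum_i n_i$.

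The difference is only in how this observation is deployed. You use it to show that $h|_{C_\mu}$ has no zeros or poles for each generic $\mu$, then invoke constancy along fibers to write $h=R(f)$, and finally perform a pole-order analysis to pin $R$ down as affine. The paper short-circuits all of this: it picks a single point $P\in C\setminus\{P_1,\ldots,P_k\}$, takes the unique pencil member $C_t$ through $P$, and observes that $C$ and $C_t$ now meet with total multiplicity at least $\sum_i n_i+1>C\cdot C_t$, forcing $C$ and $C_t$ to share a component and hence (since they lie in the same class) to coincide. So the paper uses one instance of the Bezout-type inequality to conclude directly, whereas you use infinitely many instances to build a factorization $h=R(f)$ and then argue further. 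Your route has the virtue of making the linear dependence $s=\alpha s_\infty+\beta s_0$ completely explicit, at the cost of the extra bookkeeping about possible shared components with $C_0$ in the polar analysis; the paper's route avoids that bookkeeping entirely.
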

\begin{proof}
Take any point $P\in C\setminus \{ P_1, \ldots , P_k \}$. There exists a unique $t\in \CP1$ such that~$C_t$ passes through~$P$. We then see that~$C$ intersects~$C_t$ with multiplicity $>\sum_{i=1}^kn_i$, which is by assumption the self-intersection number of~$C$. It then follows that $C = C_t$.
\end{proof}

We can construct curves in ${\mathbb {F}}_2$ disjoint from the section
$\sigma _{\infty}$ at infinity and intersecting the fiber of the ruling
twice (possibly once, with multiplicity two) in the following way.
Since the curve is disjoint from $\sigma _{\infty}$, it is a double section
of the bundle ${\mathcal {O}}(2)\to \CP{1}$. Such double sections
can be given by sections of ${\mathcal {O}}(4)\to \CP{1}$:
for a given section $\sigma$ of ${\mathcal {O}}(4)$, use
the identification ${\mathcal {O}}(2)\otimes {\mathcal {O}}(2)\cong
{\mathcal {O}}(4)$ and get a double section of ${\mathcal {O}}(2)$:
over $P\in \CP{1}$ take those points $\zeta \in {\mathcal {O}}(2)$ which
satisfy $\zeta \otimes \zeta =\sigma _P$, see~\eqref{eq:char-poly1} with
the notational change of denoting
$G_{\theta}$ by $\sigma$.
There are two such points ($\zeta $ and $-\zeta $)
if $\sigma _P\neq 0$ and there is a single one if $\sigma _P=0$.
(Indeed, this is the classical construction of double branched covers.)
In turn, sections of ${\mathcal {O}}(4)\to \CP{1}$ can be given by
homogeneous degree-4 polynomials in the homogeneous coordinates
$[u:v]$ on $\CP{1}$.

Here are some instructive examples of this phenomenon:
\begin{itemize}\itemsep=0pt
\item The polynomial $u^4$ (as a section of ${\mathcal {O}}(4)$)
provides two sections in ${\mathcal {O}}(2)$ which are tangent to each
other over the point $[0:1]\in \CP{1}$.
\item The polynomial $u^2v^2$ gives rise to two sections of ${\mathcal
 {O}}(2)$, intersecting each other transversally in two points over $[0:1]$
and $[1:0]\in \CP{1}$.
\item The polynomial $u^2v(u+v)$ defines a double section in
 ${\mathcal {O}}(2)$ which has a node over $[0:1]\in \CP{1}$,
and is tangent to the fibers of the ruling at $[1:0]$ and $[-1:1]\in \CP{1}$.
\item The polynomial $u^3v$ will define a cuspidal curve which is a
double section in ${\mathbb {F}}_2$ -- the cusp point is over $[0:1]$
and the fiber over $[1:0]$ is tangent to the cuspidal curve.
\end{itemize}

We will start our examples with configurations admitting sections.

\begin{Example}[fibration with another $I_0^*$]\label{exam:I0I0}
If $C_0$ is a section
of the ruling ${\mathbb {F}}_2\to \CP{1}$ with square 2 and we take it
with multiplicity 2, the pencil generated by $(C_0, C_{\infty})$
provides a fibration with two singular fibers, both of type $I_0^*$, cf.\ Fig.~\ref{fig:I0}.
\begin{figure}[t] \centering
\includegraphics[scale=0.8]{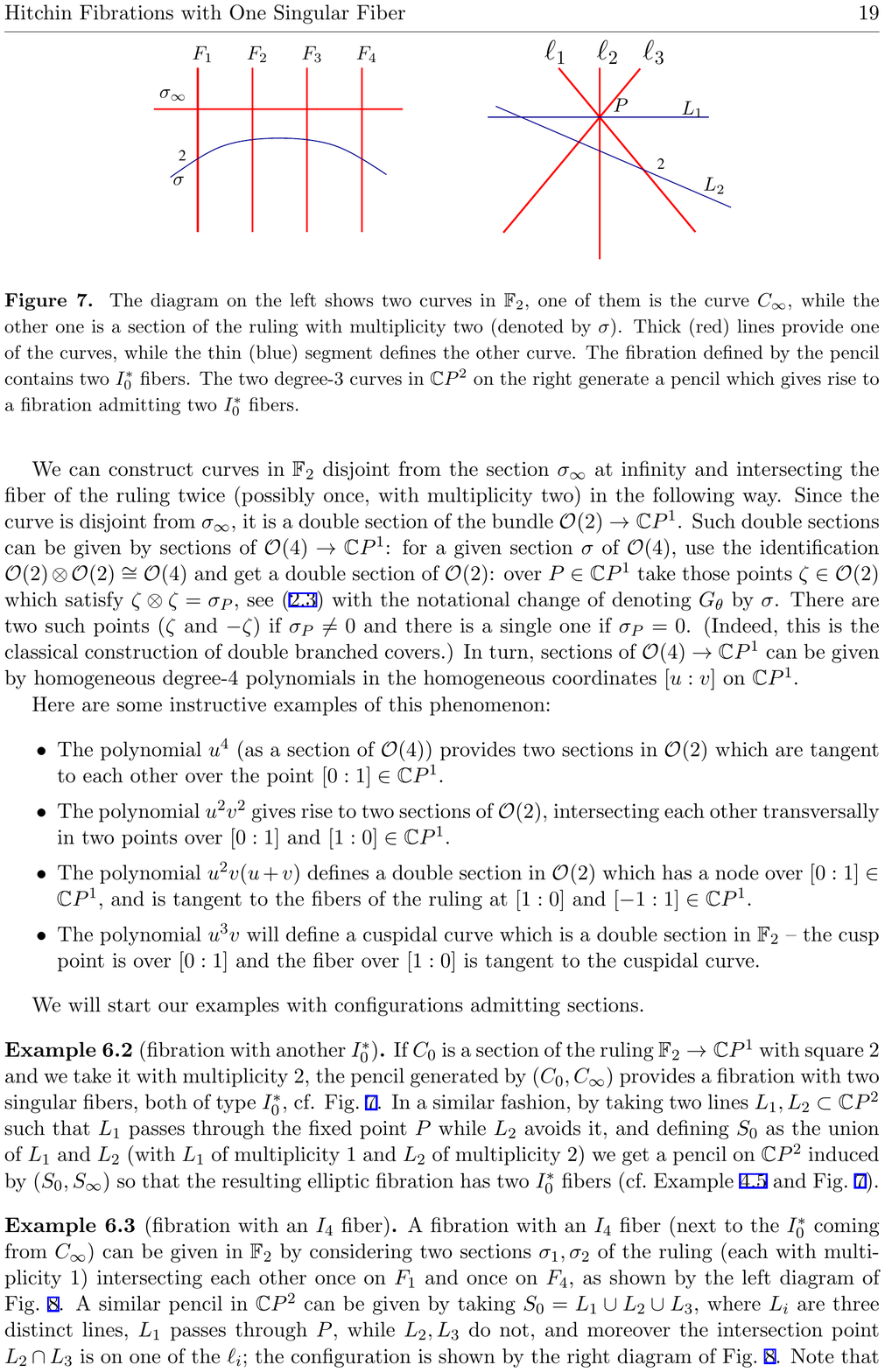}
\caption{The diagram on the left shows two curves in ${\mathbb
 {F}}_2$, one of them is the curve $C_{\infty}$, while the other
 one is a section of the ruling with multiplicity two (denoted by~$\sigma$). Thick (red) lines provide one of the curves, while the
 thin (blue) segment defines the other curve.
The fibration defined by the pencil contains two
 $I_0^*$ fibers. The two degree-3 curves in $\CP{2}$ on the right
 generate a pencil which gives rise to a fibration admitting two~$I_0^*$ fibers.}\label{fig:I0}
\end{figure}
In a similar fashion, by taking two lines $L_1, L_2\subset \CP{2}$ such
that $L_1$ passes through the fixed point $P$ while $L_2$ avoids it,
and defining $S_0$ as the union of $L_1$ and $L_2$ (with $L_1$ of
multiplicity 1 and $L_2$ of multiplicity 2) we get a pencil on
$\CP{2}$ induced by $(S_0, S_{\infty})$ so that the resulting elliptic
fibration has two $I_0^*$ fibers (cf.\ Example~\ref{exam:CP2I0} and Fig.~\ref{fig:I0}).
\end{Example}

\begin{Example}[fibration with an $I_4$ fiber]\label{exam:I0I4}
A fibration with an $I_4$ fiber (next to the $I_0^*$ coming from
$C_{\infty}$) can be given in ${\mathbb {F}}_2$ by considering two
sections $\sigma _1$, $\sigma _2$ of the ruling (each with multiplicity
1) intersecting each other once on $F_1$ and once on $F_4$, as shown
by the left diagram of Fig.~\ref{fig:I4}. A similar pencil in
$\CP{2}$ can be given by taking $S_0=L_1\cup L_2\cup L_3$, where $L_i$
are three distinct lines, $L_1$ passes through $P$, while $L_2$, $L_3$
do not, and moreover the intersection point $L_2\cap L_3$ is on one of
the~$\ell _i$; the configuration is shown by the right diagram of
Fig.~\ref{fig:I4}. Note that by the classification of possible
singular fibers, it follows that next to the $I_0^*$ and $I_4$ fibers
there is a single possibility for other singular fibers: these must be
two fishtail ($I_1$) fibers.
\begin{figure}[t] \centering
\includegraphics[scale=0.8]{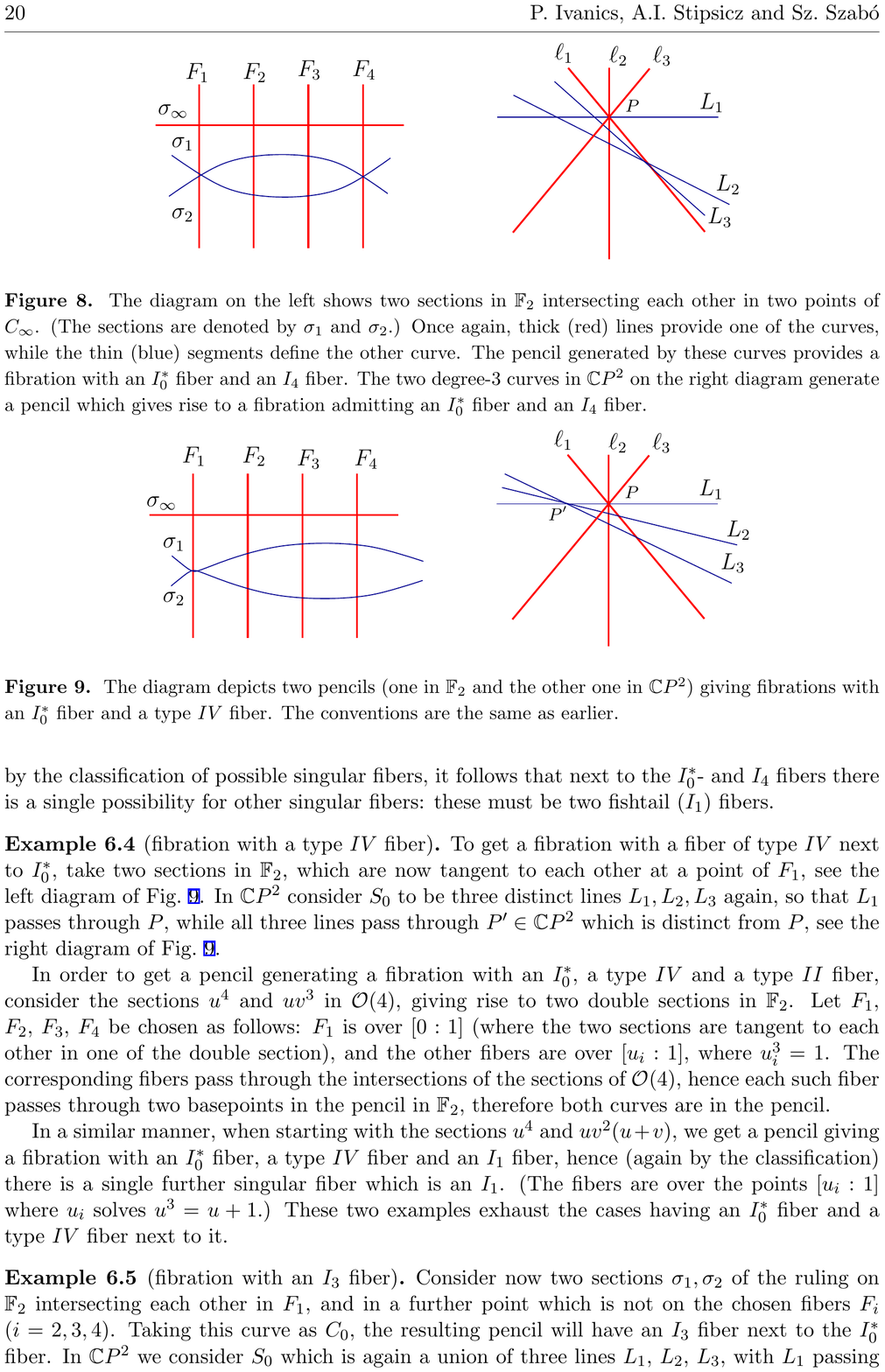}
\caption{The diagram on the left shows two sections in ${\mathbb
 {F}}_2$ intersecting each other in two points of~$C_{\infty}$. (The sections are denoted by $\sigma _1$ and $\sigma
 _2$.) Once again, thick (red) lines provide one of the curves, while
 the thin (blue) segments define the other curve. The pencil
 generated by these curves provides a fibration with an~$I_0^*$ fiber
 and an~$I_4$ fiber. The two degree-3 curves in $\CP{2}$ on the
 right diagram generate a pencil which gives rise to a fibration
 admitting an~$I_0^*$ fiber and an~$I_4$ fiber.}\label{fig:I4}
\end{figure}
\end{Example}

\begin{Example}[fibration with a type $IV$ fiber]
To get a fibration with a fiber of type $IV$ next to $I_0^*$, take
two sections in ${\mathbb {F}}_2$, which are now tangent to each other at
a point of $F_1$, see the left diagram of Fig.~\ref{fig:IV}. In
$\CP{2}$ consider $S_0$ to be three distinct lines $L_1$, $L_2$, $L_3$
again, so that $L_1$ passes through $P$, while all three lines pass
through $P'\in \CP{2}$ which is distinct from~$P$, see the right
diagram of Fig.~\ref{fig:IV}.
\begin{figure}[t] \centering
\includegraphics[scale=0.8]{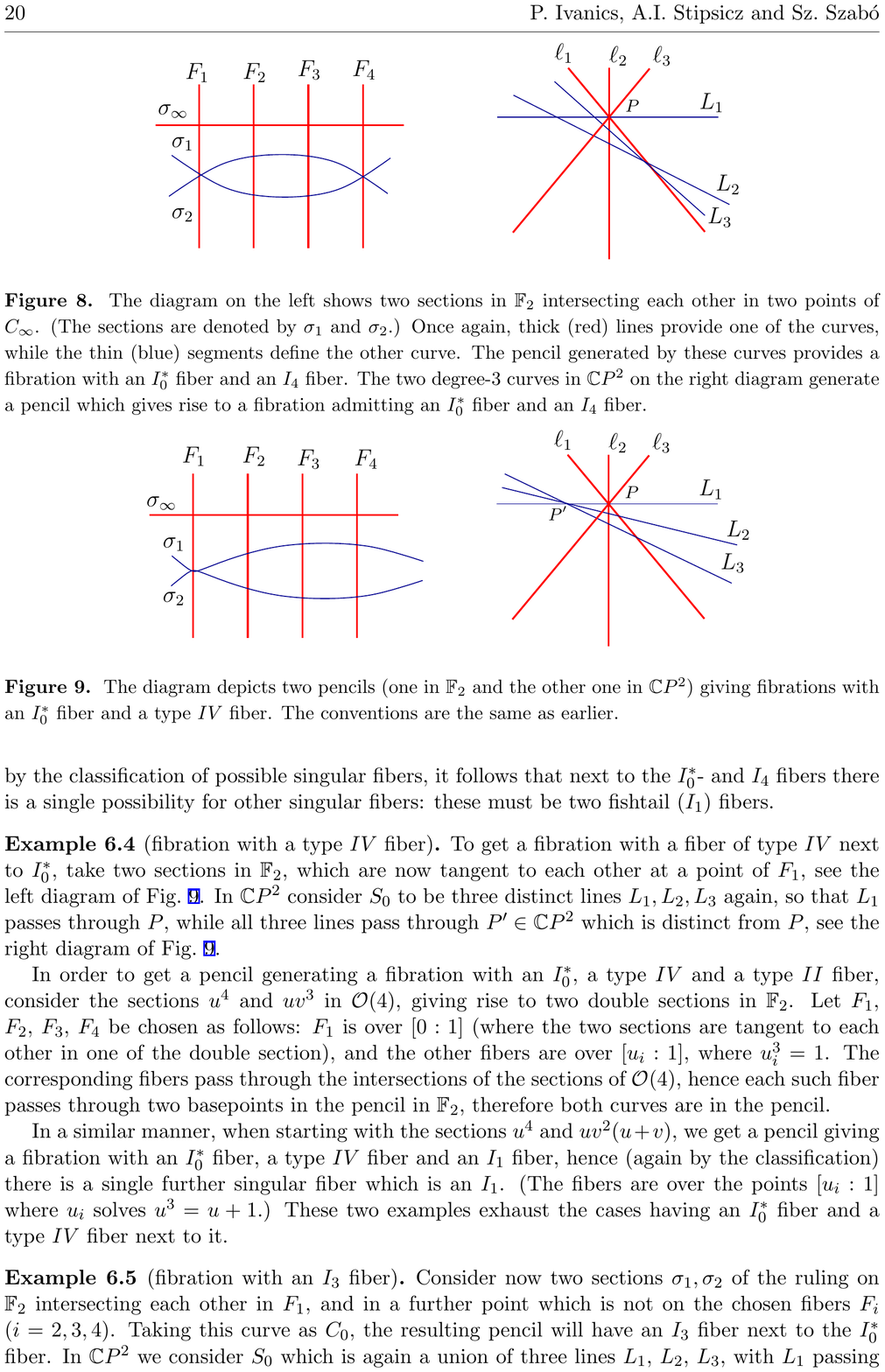}
\caption{The diagram depicts two pencils (one in ${\mathbb {F}}_2$
 and the other one in $\CP{2}$) giving fibrations with an
 $I_0^*$ fiber and a type $IV$ fiber. The conventions are the same as
 earlier.}\label{fig:IV}
\end{figure}

In order to get a pencil generating a fibration with an $I_0^*$, a
type $IV$ and a type $II$ fiber, consider the sections $u^4$ and
$uv^3$ in ${\mathcal {O}}(4)$, giving rise to two double sections in
${\mathbb {F}}_2$. Let $F_1$, $F_2$, $F_3$, $F_4$ be chosen as follows:
$F_1$ is over $[0:1]$ (where the two sections are tangent to each
other in one of the double section), and the other fibers are over
$[u_i:1]$, where $u_i^3=1$. The corresponding fibers pass through the
intersections of the sections of ${\mathcal {O}}(4)$, hence each such
fiber passes through two basepoints in the pencil in ${\mathbb
 {F}}_2$, therefore both curves are in the pencil.

In a similar manner, when starting with the sections $u^4$ and
$uv^2(u+v)$, we get a pencil giving a fibration with an $I_0^*$ fiber,
a type $IV$ fiber and an $I_1$ fiber, hence (again by the
classification) there is a single further singular fiber which is an
$I_1$. (The fibers are over the points $[u_i:1]$ where $u_i$ solves
$u^3=u+1$.)
These two examples exhaust the cases having an $I_0^*$ fiber
and a~type~$IV$ fiber next to it.
\end{Example}

\begin{Example}[fibration with an $I_3$ fiber]\label{exam:PeldakI3}
Consider now two sections $\sigma _1$, $\sigma _2$ of the ruling
on~${\mathbb {F}}_2$ intersecting each other in~$F_1$, and in a further
point which is not on the chosen fibers $F_i$ ($i=2,3,4$).
Taking this curve as~$C_0$, the resulting pencil will have an $I_3$ fiber
next to the $I_0^*$ fiber. In~$\CP{2}$ we consider $S_0$ which is again a
union of three lines $L_1$, $L_2$, $L_3$, with $L_1$ passing through~$P$ and
$L_2$, $L_3$ intersecting each other in a generic point $P'$ (which is
not on $L_1$ and on any $\ell _i$); see Fig.~\ref{fig:I3}.
\begin{figure}[t] \centering
\includegraphics[scale=0.8]{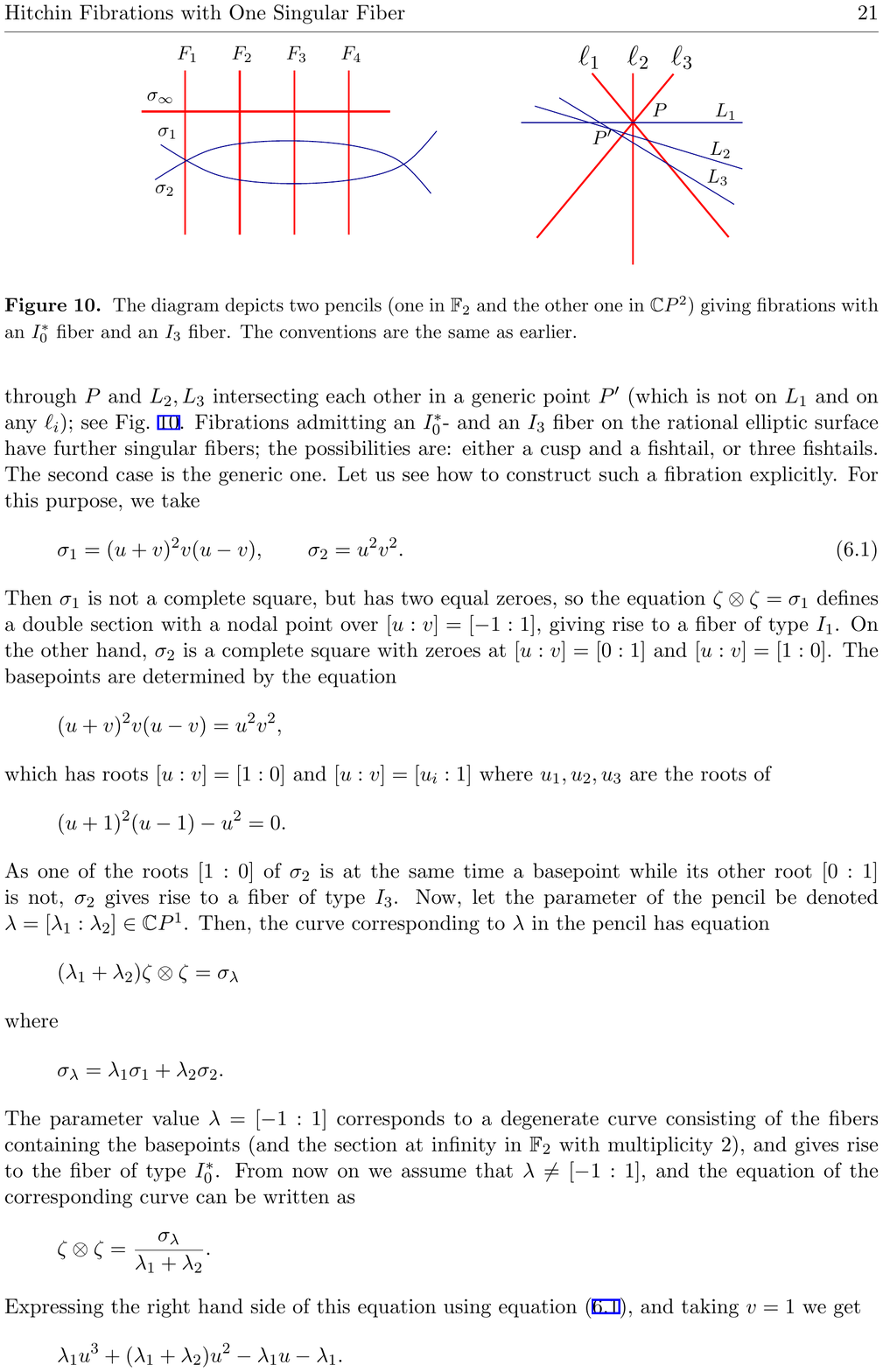}
\caption{The diagram depicts two pencils (one in ${\mathbb {F}}_2$
 and the other one in $\CP{2}$) giving fibrations with an~$I_0^*$ fiber and an~$I_3$ fiber. The conventions are the same as
 earlier.}\label{fig:I3}
\end{figure}
Fibrations admitting an $I_0^*$ and an $I_3$ fiber on the rational
elliptic surface have further singular fibers; the possibilities are: either a
cusp and a fishtail, or three fishtails. The second case is the
generic one. Let us see how to construct such a fibration explicitly.
For this purpose, we take
\begin{gather}\label{eq:SIGMA1}
 \sigma_1 = (u+v)^2 v (u - v), \qquad \sigma_2 = u^2 v^2.
\end{gather}
Then $\sigma_1$ is not a complete square, but has two equal zeroes,
so the equation $\zeta \otimes \zeta = \sigma_1$ defines a double section with
a nodal point over $[u:v] = [-1:1]$, giving rise to a fiber of type $I_1$.
On the other hand, $\sigma_2$ is a complete square with zeroes
at $[u:v] = [0:1]$ and $[u:v] = [1:0]$.
The basepoints are determined by the equation
\begin{gather*}
 (u+v)^2 v (u - v) = u^2 v^2,
\end{gather*}
which has roots $[u:v] = [1:0]$ and $[u:v] = [u_i:1]$ where $u_1$, $u_2$, $u_3$ are the roots of
\begin{gather*}
 (u+1)^2(u-1) - u^2=0.
\end{gather*}
As one of the roots $[1:0]$ of $\sigma_2$ is at the same time a basepoint while its other root
$[0:1]$ is not, $\sigma_2$ gives rise to a fiber of type $I_3$.
Now, let the parameter of the pencil be denoted $\lambda = [\lambda_1 : \lambda_2] \in\CP1$.
Then, the curve corresponding to $\lambda$ in the pencil has equation
\begin{gather*}
 (\lambda_1 + \lambda_2) \zeta \otimes \zeta = \sigma_{\lambda},
\end{gather*}
where
\begin{gather*}
 \sigma_{\lambda} = \lambda_1 \sigma_1 + \lambda_2 \sigma_2.
\end{gather*}
The parameter value $\lambda = [-1:1]$ corresponds to a degenerate
 curve consisting of the fibers containing the basepoints (and the
 section at infinity in ${\mathbb {F}}_2$ with multiplicity $2$), and
 gives rise to the fiber of type $I_0^*$. From now on we assume that
 $\lambda \neq [-1:1]$, and the equation of the corresponding curve
 can be written as
\begin{gather*}
 \zeta \otimes \zeta = \frac{\sigma_{\lambda}}{\lambda_1 + \lambda_2}.
\end{gather*}
Expressing the right hand side of this equation using equation~\eqref{eq:SIGMA1},
and taking $v=1$ we get
\[
\lambda _1u^3+(\lambda _1+\lambda _2)u^2-\lambda _1u-\lambda _1.
\]
The discriminant of this polynomial is given by
\begin{gather*}
 \lambda_1 \lambda_2 \frac{32 \lambda_1^2 + 13 \lambda_1 \lambda_2 + 4 \lambda_2^2}{(\lambda_1 + \lambda_2)^4}.
\end{gather*}
The parameter values $\lambda_1 = 0$ and $\lambda_2 = 0$ define $\sigma_2$ and $\sigma_1$ respectively,
hence we have already treated them. (Remember that $\lambda_1$, $\lambda_2$ may not simultaneously vanish.)
Consider the roots $\lambda^+=[\lambda _1^+:\lambda _2^+], \lambda^- =[\lambda _1^-:\lambda _2^-] \in\CP1$ of
\begin{gather*}
 32 \lambda_1^2 + 13 \lambda_1 \lambda_2 + 4 \lambda_2^2;
\end{gather*}
a straightforward check shows that $\lambda^+ \neq \lambda^-$ and that
\begin{gather*}
 \lambda^+, \lambda^- \in\C\setminus \{ [0:1] , [-1:1] \}\subset\CP1.
\end{gather*} Then, $\sigma_{\lambda^{\pm}}$ both have a double zero, therefore
 each of the corresponding curves has at least one singular
 point. Moreover, these singular curves are different from the ones
 we found earlier. By the classification, the corresponding curves in
 the fibration are both of type $I_1$, and we are done.

Now, let us turn our attention to finding an example of the non-generic case,
i.e., a cuspidal curve beside $I_0^*$ and $I_3$.
Using our earlier method of generating double sections
from homogeneous degree-4 polynomials, we proceed as follows. Take the
polynomials $u^2v^2$ and $u(u+v)^3$, and consider the fibers $F_1,
\ldots , F_4$ over the points $[u_i:1]\in \CP{1}$ for which
$u^2=u(u+1)^3$ holds. These are the fibers passing through the
intersection points of the sections in ${\mathcal {O}}(4)$, hence of
pairs of basepoints in ${\mathbb {F}}_2$. The second curve is a
cuspidal curve, hence the resulting fibration will have an
$I_0^*$ fiber, together with an $I_3$ fiber and a cusp (and a further
$I_1$ fiber, as dictated by the classification).
\end{Example}

\begin{Example}[fibration with $III$]\label{exam:PeldakIII}
The curve $C_0$ in ${\mathbb {F}}_2$ is a union of two sections, which
are tangent to each other in a point not on the chosen fibers $F_i$
($i=1,2,3,4$). (Alternatively, if we get a fibration with a
double-section which has a cusp point on the fiber~$F_4$~-- as shown in
Fig.~\ref{fig:III} -- we get a fibration with an $I_0^*$ fiber and a~type~$III$ fiber.) The curve $S_0$ in the pencil in~$\CP{2}$ now
consists of two curves, a line $L_1$ passing through $P$ and a quadric
$Q$ which does not pass through $P$ and it is tangent to~$L_1$, as
shown by the right diagram of Fig.~\ref{fig:III}.
\begin{figure}[t] \centering
\includegraphics[scale=0.8]{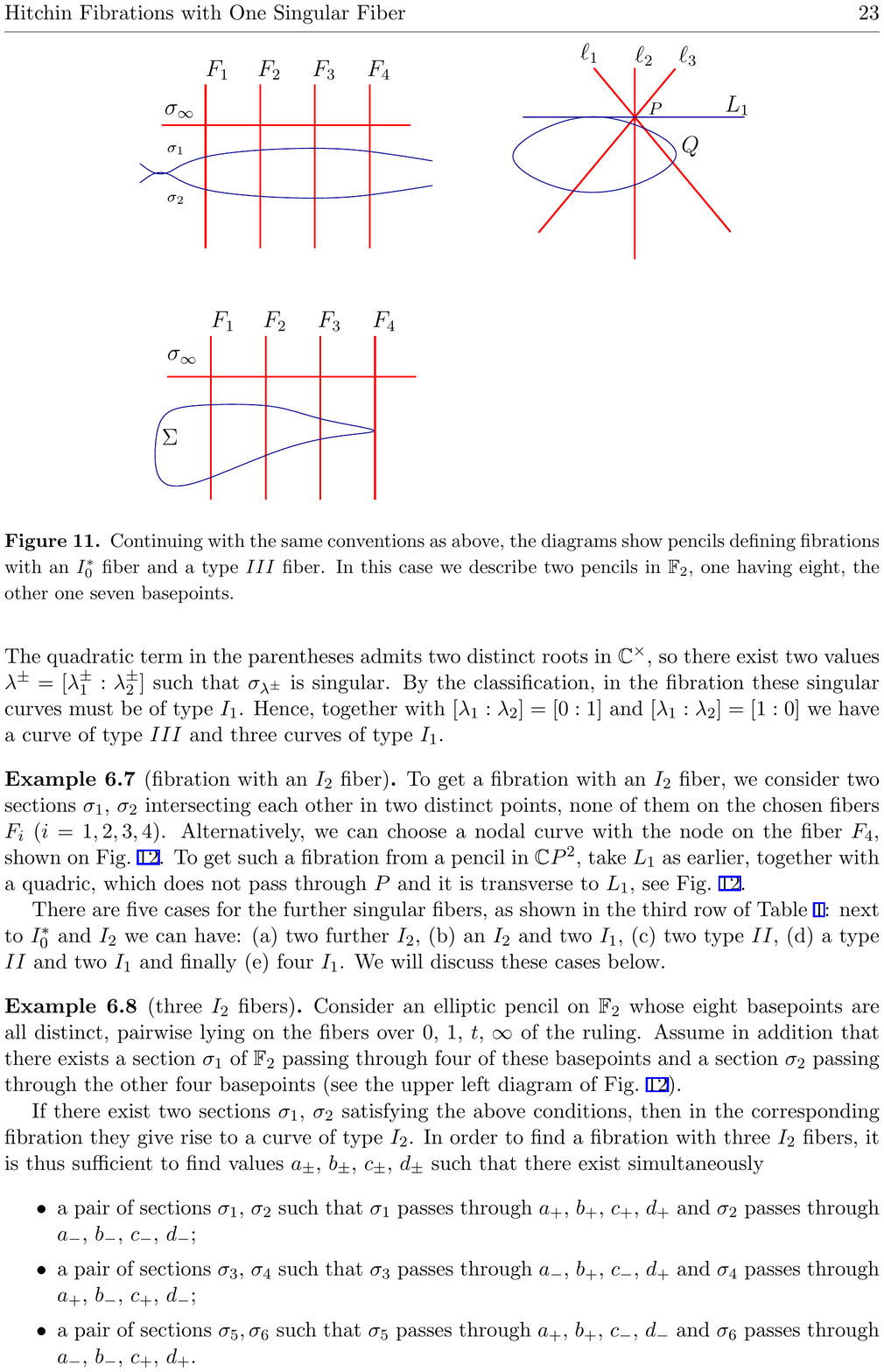}
\caption{Continuing with the same conventions as above, the diagrams show pencils defining fibrations with an $I_0^*$ fiber and a type $III$ fiber. In this case we describe two pencils in ${\mathbb {F}}_2$, one having eight, the other one seven basepoints.}\label{fig:III}
\end{figure}
There are four possibilities of the further fibers (listed in the second row of Table~\ref{tab:singfibers}): it can be (a) another type $III$ fiber, (b) $I_2$ and $I_1$, (c) $II$ and $I_1$ and finally (d) three $I_1$'s.

By taking the degree-4 polynomials $u^3v$ and $uv^3$ and the fibers over $[0:1], [\pm 1 :1], [1:0]\allowbreak \in \CP{1}$, the pencil will give rise to a fibration with an $I_0^*$ fiber and two type $III$ fibers. Similarly, by taking $u^4$ and $(u+v)v^3$ and the fibers over $[u_i:1]$ with $u_i$ solving $u^4=(u+1)$, we get a~pencil giving a~fibration with an $I_0^*$, a~type~$III$ and a cusp (type~$II$) fibers, and hence necessarily a~further~$I_1$ fiber. In a similar manner, starting with $u^3v$ and $uv^2(u+v)$, and taking the fibers over $[0:1], [1:0]\in \CP{1}$ and $[u_i:1]\in \CP{1}$ with $u_i$ solving $u^2=u+1$, we get a pencil giving a fibration with an $I_0^*$ fiber, a~type $III$ and an $I_2$ fibers (and therefore a further $I_1$). In the generic situation we have three $I_1$ fibers next to $I_0^*$ and $III$. This case can be shown to exist by the same method as the generic case in Example~\ref{exam:PeldakI3} has been handled. Specifically, setting $\sigma_1 = u^4$ (giving rise to a curve of type~$III$ with singularity over $[0:1]$) and $\sigma_2 = (u-v)^2 v (u+v)$ (giving rise to a curve of type $I_1$ with singularity over $[1:1]$), the generic curve in the pencil again reads as
\begin{gather*}
 \sigma_{\lambda} = \lambda_1 \sigma_1 + \lambda_2 \sigma_2.
\end{gather*}
A computation shows that the discriminant of $\sigma_{\lambda}$ is given by
\begin{gather*}
 \Delta = \lambda_1 \lambda_2^3 \big(256 \lambda_1^2 - 107 \lambda_1 \lambda_2 - 32 \lambda_2^2 \big).
\end{gather*}
The quadratic term in the parentheses admits two distinct roots in $\C^{\times}$, so there exist two values $\lambda^{\pm} = \big[\lambda_1^{\pm} : \lambda_2^{\pm}\big]$ such that $\sigma_{\lambda^{\pm}}$ is singular. By the classification, in the fibration these singular curves must be of type $I_1$. Hence, together with $[\lambda_1 : \lambda_2] = [0:1]$ and $[\lambda_1 : \lambda_2] = [1:0]$ we have a curve of type $III$ and three curves of type $I_1$.
\end{Example}

\begin{Example}[fibration with an $I_2$ fiber] To get a fibration with an $I_2$ fiber, we consider two sections $\sigma _1$, $\sigma _2$ intersecting each other in two distinct points, none of them on the chosen fibers $F_i$ ($i=1,2,3,4$). Alternatively, we can choose a nodal curve with the node on the fiber $F_4$, shown on Fig.~\ref{fig:I2}.
\begin{figure}[t] \centering
\includegraphics[scale=0.8]{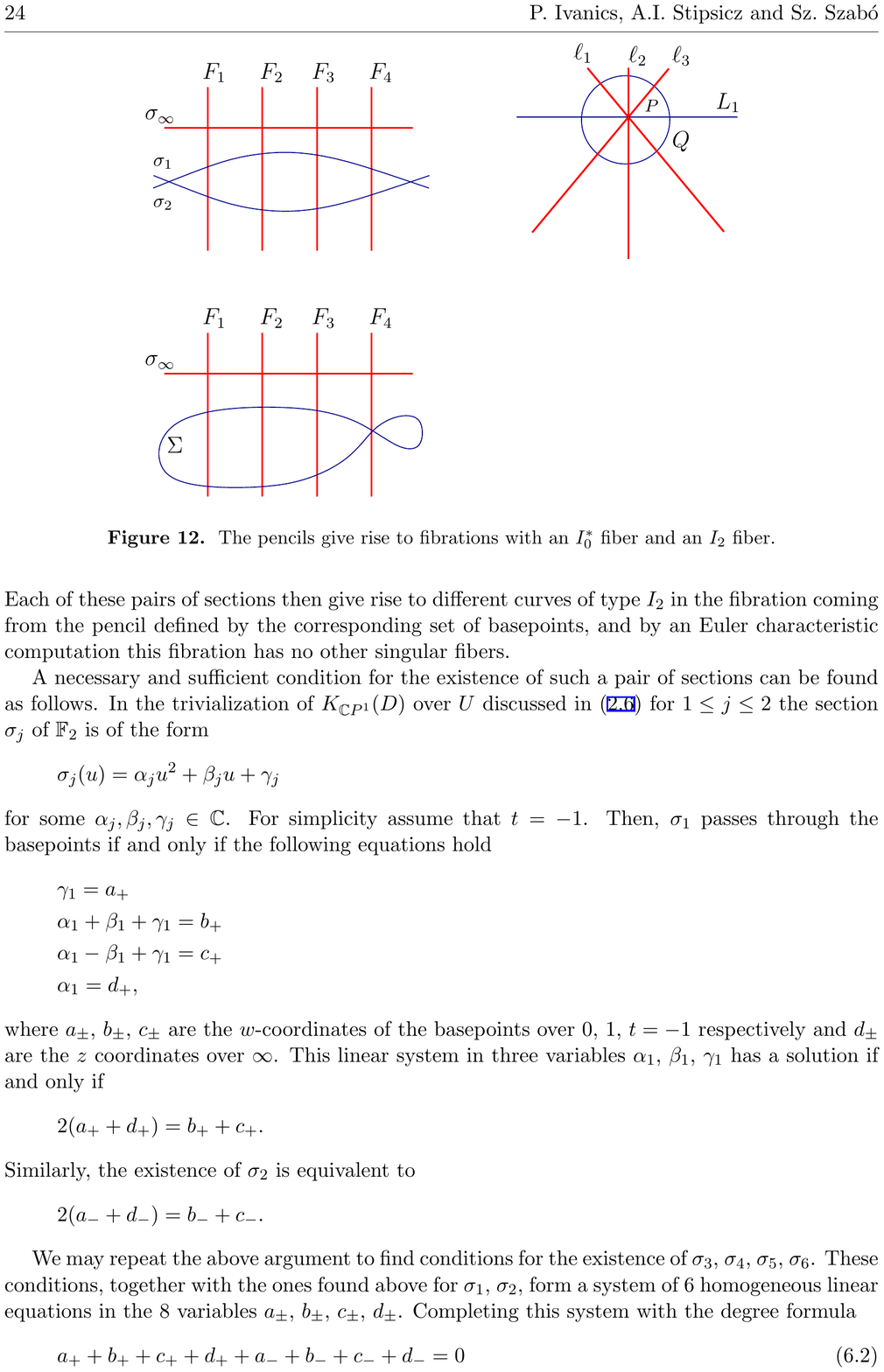}
\caption{The pencils give rise to fibrations with an $I_0^*$ fiber and an $I_2$ fiber.}\label{fig:I2}
\end{figure}
To get such a fibration from a pencil in $\CP{2}$, take $L_1$ as earlier, together with a~quadric, which does not pass through $P$ and it is transverse to~$L_1$, see Fig.~\ref{fig:I2}.

There are five cases for the further singular fibers, as shown
in the third row of Table~\ref{tab:singfibers}: next to $I_0^*$ and
$I_2$ we can have: (a) two further $I_2$, (b) an $I_2$ and two $I_1$,
(c) two type $II$, (d)~a~type~$II$ and two~$I_1$ and finally (e) four $I_1$.
We will discuss these cases below.
\end{Example}

\begin{Example}[three $I_2$ fibers]
Consider an elliptic pencil on ${\mathbb {F}}_2$ whose eight basepoints are
all distinct, pairwise lying on the fibers over $0$, $1$, $t$, $\infty $ of the
ruling. Assume in addition that there exists a section $\sigma_1$ of
${\mathbb {F}}_2$ passing through four of these basepoints and a
section $\sigma_2$ passing through the other four basepoints
(see the upper left diagram of Fig.~\ref{fig:I2}).

If there exist two sections $\sigma_1$, $\sigma_2$ satisfying the above
conditions, then in the corresponding fibration they give rise to a
curve of type $I_2$. In order to find a fibration with three
$I_2$ fibers, it is thus sufficient to find values $a_{\pm}$, $b_{\pm}$, $c_{\pm}$, $d_{\pm}$ such that there exist simultaneously
\begin{itemize}\itemsep=0pt
 \item a pair of sections $\sigma_1$, $\sigma_2$ such that $\sigma_1$
 passes through $a_+$, $b_+$, $c_+$, $d_+$ and $\sigma_2$ passes through
 $a_-$, $b_-$, $c_-$, $d_-$;
 \item a pair of sections $\sigma_3$, $\sigma_4$ such that $\sigma_3$
 passes through $a_-$, $b_+$, $c_-$, $d_+$ and $\sigma_4$ passes through $a_+$, $b_-$, $c_+$, $d_-$;
 \item a pair of sections $\sigma_5$, $\sigma_6$ such that $\sigma_5$
 passes through $a_+$, $b_+$, $c_-$, $d_-$ and $\sigma_6$ passes through $a_-$, $b_-$, $c_+$, $d_+$.
\end{itemize}
Each of these pairs of sections then give rise to different curves of type $I_2$ in the fibration coming
from the pencil defined by the corresponding set of basepoints, and by an Euler characteristic computation
this fibration has no other singular fibers.

A necessary and sufficient condition for the existence of such a pair of sections can be found as follows. In the trivialization {of $K_{\CP1}(D)$ over $U$}
discussed {in~\eqref{eq:w-coord}} 
for $1\leq j \leq 2$ the section~$\sigma_j$ of~${\mathbb {F}}_2$ is of the form
\begin{gather*}
 \sigma_j (u) = \alpha_j u^2 + \beta_j u + \gamma_j
\end{gather*}
for some $\alpha_j ,\beta_j ,\gamma_j \in\mathbb{C}$. For simplicity assume that $t = -1$.
Then, $\sigma_1$ passes through the basepoints if and only if the following equations hold
\begin{gather*}
 \gamma_1 = a_+, \\
 \alpha_1 + \beta_1 + \gamma_1 = b_+, \\
 \alpha_1 - \beta_1 + \gamma_1 = c_+, \\
 \alpha_1 = d_+,
\end{gather*}
where $a_{\pm}$, $b_{\pm}$, $c_{\pm}$ are the $w$-coordinates of
the basepoints over $0$, $1$, $t=-1$ respectively and $d_{\pm}$ are the~$z$ coordinates over~$\infty$.
This linear system
in three variables $\alpha_1$, $\beta_1$, $\gamma_1$ has a solution if and
only if
\begin{gather*}
 2(a_+ + d_+ ) = b_+ + c_+.
\end{gather*}
Similarly, the existence of $\sigma_2$ is equivalent to
\begin{gather*}
 2(a_- + d_- ) = b_- + c_-.
\end{gather*}

We may repeat the above argument
to find conditions for the existence of $\sigma_3$, $\sigma_4$, $\sigma_5$, $\sigma_6$. These conditions, together with the ones found above for $\sigma_1$, $\sigma_2$, form a system of $6$ homogeneous linear equations in the $8$ variables $a_{\pm}$, $b_{\pm}$, $c_{\pm}$, $d_{\pm}$. Completing this system with the degree formula
\begin{gather}\label{eq:degree}
 a_+ + b_+ + c_+ + d_+ + a_- + b_- + c_- + d_- = 0
\end{gather}
results in a system of $7$ homogeneous linear equations in~$8$ variables. In view of the assumption that the basepoints are all distinct, in order to get a fibration with three~$I_2$ fibers we merely need to show that the above linear system admits generic solutions, namely solutions such that
\begin{gather}\label{eq:generic}
 a_+ \neq a_-, \qquad b_+ \neq b_-, \qquad c_+ \neq c_-, \qquad d_+ \neq d_-.
\end{gather}
Now, \looseness=-1 the coefficient matrix with respect to the variables $a_+, a_-, \ldots, d_+, d_-$ of the system of equations governing the existence of
$\sigma_1, \ldots , \sigma_6$ complemented by the degree formula~\eqref{eq:degree} reads as
\begin{gather*}
 \begin{pmatrix}
 2 & 0 & -1 & 0 & -1 & 0 & 2 & 0 \\
 0 & 2 & 0 & -1 & 0 & -1 & 0 & 2 \\
 0 & 2 & -1 & 0 & 0 & -1 & 2 & 0 \\
 2 & 0 & 0 & -1 & -1 & 0 & 0 & 2 \\
 2 & 0 & -1 & 0 & 0 & -1 & 0 & 2\\
 0 & 2 & 0 & -1 & -1 & 0 & 2 & 0\\
 1 & 1 & 1 & 1 & 1 & 1 & 1 & 1
 \end{pmatrix}.
\end{gather*}
One numerically checks that this matrix is of rank $5$, hence it
 admits a three-dimensional family of solutions. As a matter of
 fact, the last row of the coefficient matrix is a linear combination
 of the first six rows, and the corresponding last entry in the
 extended matrix of the linear system of equations is~$0$, i.e., the
 degree formula automatically holds once there exists three fibers of
 type~$I_2$ in the given configuration; consequently, the degree
 formula does not effect the solvability of the system. In addition,
 one sees that $c_-$, $d_+$, $d_-$ may be chosen as parameters and the
 other variables may be expressed in terms of these as
\begin{gather*}
 a_+ = - d_-, \qquad a_- = - d_+, \qquad b_+ = - c_-, \qquad b_- = 2d_- - 2d_+ - c_- , \\ c_+ = -2d_- + 2d_+ + c_- .
\end{gather*} It is therefore sufficient to choose any $d_- \neq d_+$ to get
 solutions satisfying~\eqref{eq:generic}. Finally, we note that a
 similar attempt to find four $I_2$ fibers fails because the
 corresponding linear system has two free parameters $d_+$, $d_-$ and
 the other variables can be expressed as
\begin{gather*}
 a_+ = - d_-, \quad a_- = - d_-, \quad \ldots.
\end{gather*}
In particular, there exist no solution satisfying \eqref{eq:generic}.
\end{Example}

\begin{Example}[three $I_2$ fibers]
Another, less explicit construction of the same type of fibration can
be given as follows: consider a nodal double section of ${\mathbb
 {F}}_2$ with the property that the node is on $F_3$ while $F_1$ and
$F_4$ are tangent to the nodal double section, see
Fig.~\ref{fig:3I2}. (It is easy to see from the Hurwitz-formula
that a nodal double section can have at most two tangent fibers.)
\begin{figure}[t] \centering
\includegraphics[scale=0.8]{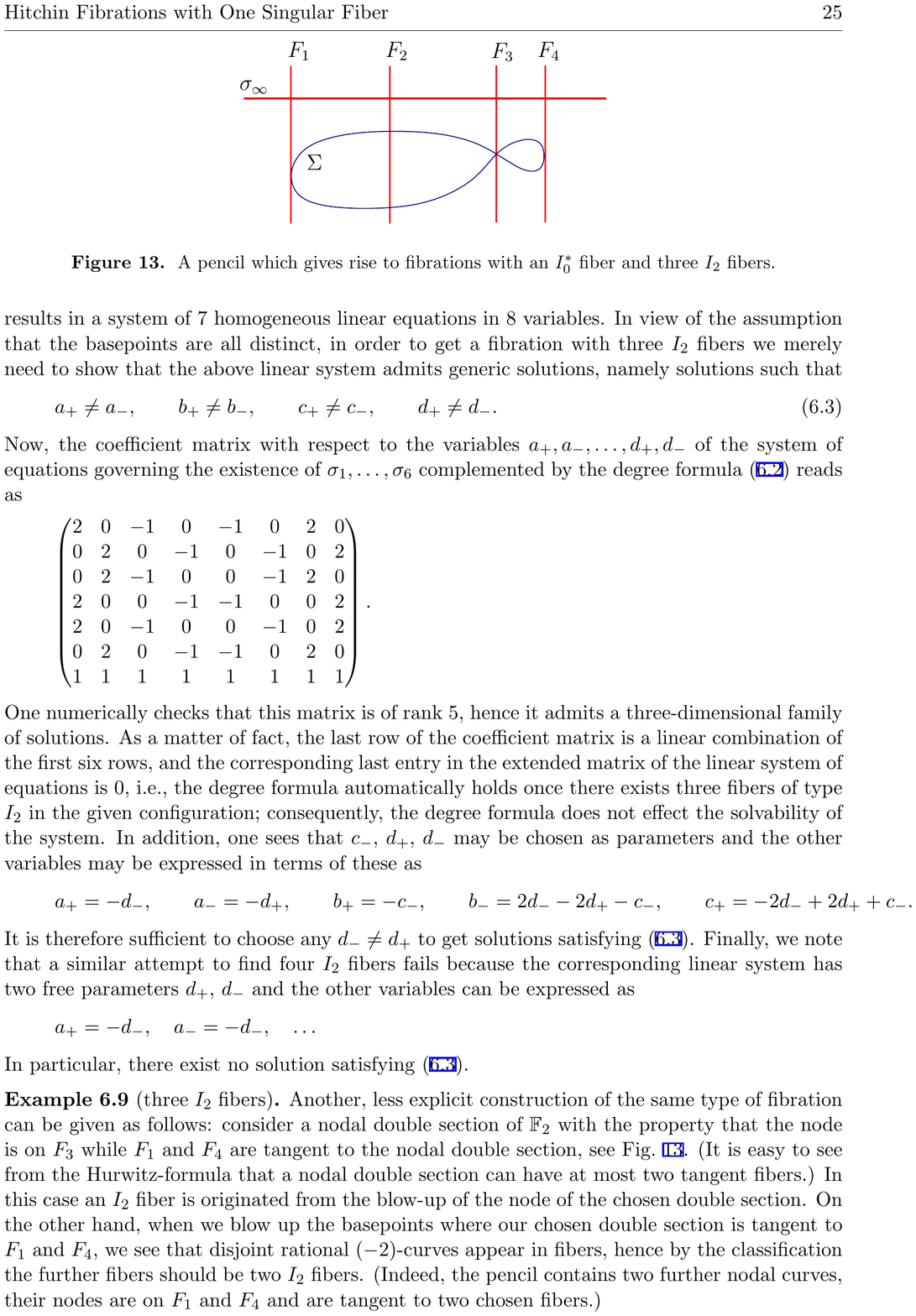}
\caption{A pencil which gives rise to fibrations with an $I_0^*$ fiber and three $I_2$ fibers.}\label{fig:3I2}
\end{figure}
In this case an $I_2$ fiber is originated from the blow-up of the node of
the chosen double section. On the other hand, when we blow up the basepoints
where our chosen double section is tangent to~$F_1$ and~$F_4$, we see that
disjoint rational $(-2)$-curves appear in fibers, hence by the classification
the further fibers should be two~$I_2$ fibers. (Indeed, the pencil contains
two further nodal curves, their nodes are on~$F_1$ and~$F_4$ and are tangent
to two chosen fibers.)
\end{Example}

If only $F_1$ is tangent to our chosen curve (see Fig.~\ref{fig:I2var}),
\begin{figure}[t] \centering
\includegraphics[scale=0.8]{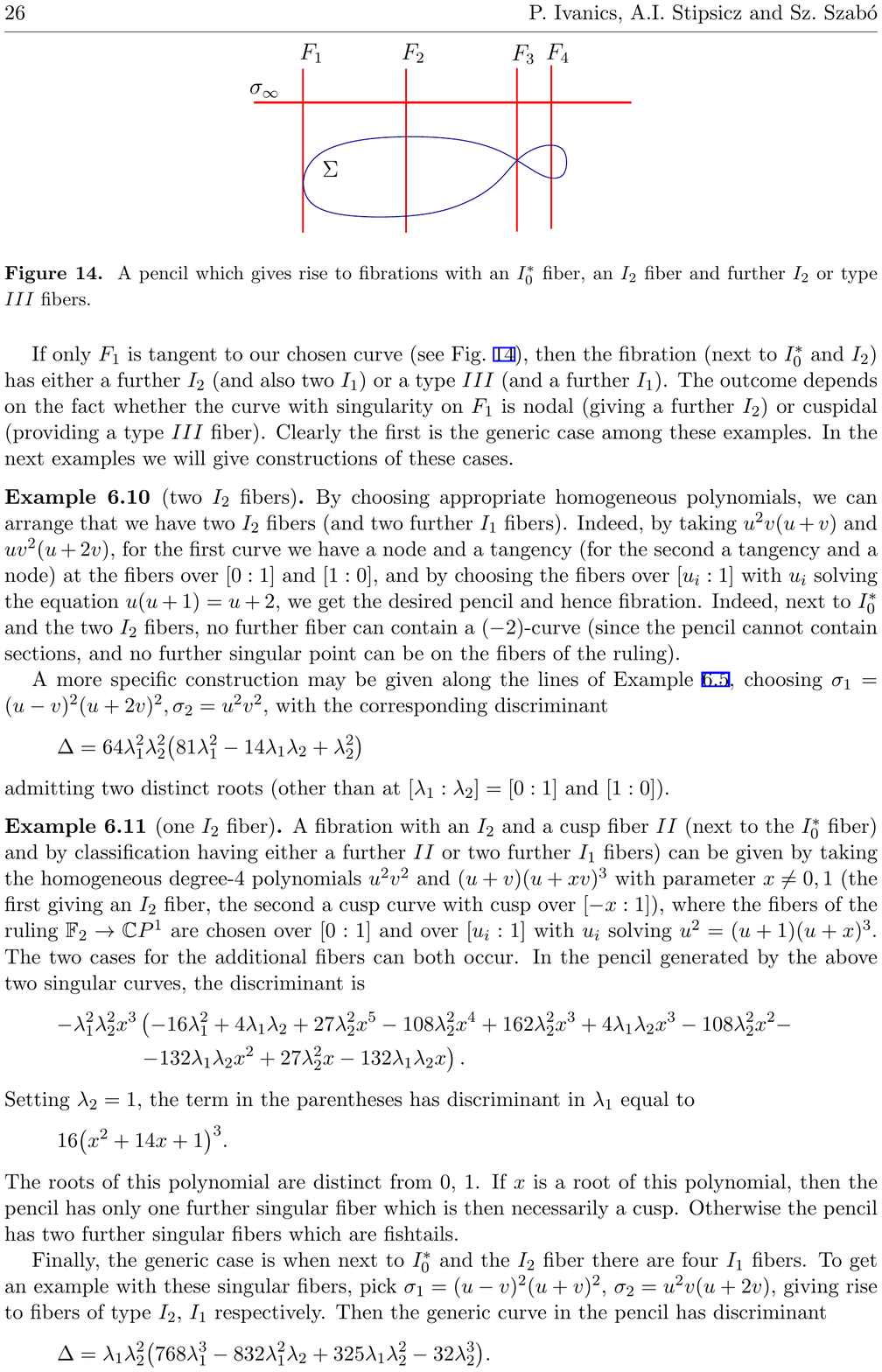}
\caption{A pencil which gives rise to fibrations with an $I_0^*$ fiber, an $I_2$ fiber and further $I_2$ or type~$III$ fibers.}\label{fig:I2var}
\end{figure}
then the fibration (next to $I_0^*$ and $I_2$) has either a further $I_2$
(and also two $I_1$) or a type $III$ (and a further $I_1$). The outcome
depends on the fact whether the curve with singularity on $F_1$
is nodal (giving a further $I_2$) or cuspidal (providing a type $III$ fiber).
Clearly the first is the generic case among these examples.
In the next examples we will give constructions of these cases.

\begin{Example}[two $I_2$ fibers] By choosing appropriate homogeneous polynomials, we can arrange that we have two $I_2$ fibers (and two further $I_1$ fibers). Indeed, by taking $u^2v(u+v)$ and $uv^2(u+2v)$, for the first curve we have a node and a tangency (for the second a tangency and a~node) at the fibers over $[0:1]$ and $[1:0]$, and by choosing the fibers over $[u_i:1]$ with $u_i$ solving the equation $u(u+1)=u+2$, we get the desired pencil and hence fibration. Indeed, next to $I_0^*$ and the two $I_2$ fibers, no further fiber can contain a $(-2)$-curve (since the pencil cannot contain sections, and no further singular point can be on the fibers of the ruling).

A more specific construction may be given along the lines of Example~\ref{exam:PeldakI3},
choosing $\sigma_1 = (u-v)^2 (u+2v)^2$, $\sigma_2 = u^2v^2$, with the corresponding discriminant
\begin{gather*}
 \Delta = 64 \lambda_1^2 \lambda_2^2 \big(81 \lambda_1^2 -14 \lambda_1 \lambda_2 + \lambda_2^2\big)
\end{gather*}
admitting two distinct roots (other than at $[\lambda_1 : \lambda_2] = [0:1]$ and $[1:0]$).
\end{Example}

\begin{Example}[one $I_2$ fiber]A fibration with an $I_2$ and a cusp fiber $II$ (next to the $I_0^*$
fiber) and by classification having either a further $II$ or two
further $I_1$ fibers) can be given by taking the homogeneous degree-4
polynomials $u^2v^2$ and $(u+v)(u+xv)^3$ with parameter $x\neq 0, 1$
(the first giving an $I_2$ fiber, the second a cusp curve with cusp
over $[-x:1]$), where the fibers of the ruling ${\mathbb {F}}_2\to
\CP{1}$ are chosen over $[0:1]$ and over $[u_i:1]$ with $u_i$ solving
$u^2=(u+1)(u+x)^3$. The two cases for the additional fibers can both
occur. In the pencil generated by the above two singular curves, the
discriminant is
\begin{gather*}
-\lambda _1^2 \lambda _2^2 x^3 \big({-}16 \lambda _1^2+4 \lambda _1 \lambda _2+ 27 \lambda _2^2 x^5-108 \lambda _2^2 x^4+162
 \lambda _2^2 x^3+4 \lambda _1 \lambda _2 x^3-108 \lambda _2^2 x^2 \\
\qquad{} -132 \lambda _1 \lambda _2 x^2+27 \lambda _2^2 x-132 \lambda _1 \lambda _2 x\big).
\end{gather*}
Setting $\lambda _2=1$, the term in the parentheses has discriminant
in $\lambda_1$ equal to
\begin{gather*}
	16 \big(x^2+14 x+1\big)^3.
\end{gather*}
The roots of this polynomial are distinct from $0$, $1$. If $x$ is a
root of this polynomial, then the pencil has only one further singular
fiber which is then necessarily a cusp. Otherwise the pencil has two
further singular fibers which are fishtails.

Finally, the generic case is when next to $I_0^*$ and the $I_2$ fiber
there are four $I_1$ fibers. To get an example with these singular
fibers, pick $\sigma_1 = (u-v)^2 (u+v)^2$, $\sigma_2 = u^2 v (u+2v)$,
giving rise to fibers of type $I_2$, $I_1$ respectively. Then the
generic curve in the pencil has discriminant
\begin{gather*}
 \Delta = \lambda_1 \lambda_2^2 \big(768 \lambda_1^3 - 832 \lambda_1^2 \lambda_2 + 325 \lambda_1 \lambda_2^2 -32 \lambda_2^3\big).
\end{gather*}
The cubic polynomial in parentheses admits three distinct roots, each providing a value of $[\lambda_1 : \lambda_2]$
for which the corresponding curve in the fibration is singular. By the classification, these singular curves are of type $I_1$.
\end{Example}

It is easy to see that if the fibration contains (next to $I_0^*$) a~fiber which is $I_4$, $I_3$, $I_2$ or type $IV$ or $III$ (their common property
being that they contain rational $(-2)$-curves), then the pencil in~${\mathbb {F}}_2$
either contains curves with more than one components (i.e., unions
of two sections of the ruling), or nodal/cuspidal curves with their
singular points on a chosen fiber $F_i$. These cases comprise the first three rows
of the list summarized in Table~\ref{tab:singfibers}, and were all considered above.

If no such curves are present in the pencil, then the singular curves
in the pencil are either cusps or nodal curves, with their singular
points away from the chosen fibers. There are four possibilities, listed in the
fourth row of Table~\ref{tab:singfibers}: next
to the $I_0^*$ fiber we can have (a) three cusps, (b) two cusps and two
fishtails, (c) one cusp and four fishtails and finally (d) six
fishtails. Clearly, the last case is the generic. In the following
we will discuss some cases left open above. We will only discuss
pencils in the Hirzebruch surface ${\mathbb {F}}_2$.

\begin{Example}[three or two cusp fibers]
It is relatively easy to find a pencil giving rise to a fibration with
at least two cusps (next to $I_0^*$): consider the double sections coming from the
homogeneous degree-4 polynomials $u^3v$ and $(u+xv)^3(u+v)$ with parameter $x\neq 0,1$.
If we take the fibers over the points $[u_i:1]\in \CP{1}$ where
$u_i$ solves $(u+x)^3(u+1)=u^3$, we get at least two cusps.

We are in cases (a) or (b) above. The actual case depends on the value
of $x$. Indeed the general polynomial of the pencil has discriminant
in $u$ (setting $v=1$) equal to
\begin{gather*}
-\lambda _1^2 \lambda _2^2 x^6 \big(27 \lambda _1^2+27 \lambda _2^2+54 \lambda _1 \lambda _2+27 \lambda _2^2 x^4-108 \lambda _2^2 x^3+4 \lambda _1 \lambda _2 x^3 \\
\qquad{} +162 \lambda _2^2 x^2-18 \lambda _1 \lambda _2 x^2-108 \lambda _2^2 x+216 \lambda _1 \lambda _2 x\big).
\end{gather*}
Setting $\lambda _2=1$, the term in the parentheses has discriminant in $\lambda_1$ equal to
\begin{gather*}
	16 x (x-9)^2 (x+3)^3.
\end{gather*}
The roots of this polynomial are $-3$ and $9$. If $x$ is equal to one of these values, then the pencil has only one further singular fiber which is then necessarily a cusp. Otherwise the pencil has two further singular fibers which are fishtails.
\end{Example}

\begin{Example}[one cusp fiber] In a similar manner, taking $u^3v$ and $(u-v)^2(u+v)(u+2v)$ we get a~fibration with a cusp and a fishtail fiber next to the $I_0^*$ fiber;
the discriminant of the generic curve in the pencil is given by
\begin{gather*}
 \Delta = - \lambda_1 \lambda_2^2 \big(108 \lambda_1^3 + 320 \lambda_1^2 \lambda_2 + 1827 \lambda_1 \lambda_2^2 + 864 \lambda_2^3\big).
\end{gather*}
As the cubic in parentheses admits three distinct roots, the remaining fibers are all $I_1$ fibers, giving the example of a cusp together with four fishtails.
\end{Example}

\begin{Example}[six fishtails] The generic case in this situation is simply six $I_1$ fibers next to~$I_0^*$.
To construct an example, set $\sigma_1 = (u-v)^2 (u+v) (u+3v)$, $\sigma_2 = u^2 v (u+2v)$, both giving rise to a type $I_1$ fiber.
The discriminant of the generic curve in the pencil reads as
\begin{gather*}
 \Delta = - \lambda_1 \lambda_2 \big(24576 \lambda_1^4 - 8272 \lambda_1^3 \lambda_2 + 3768 \lambda_1^2 \lambda_2^2 -517 \lambda_1 \lambda_2^3 + 96 \lambda_2^4\big).
\end{gather*} It can be shown that the quartic polynomial in parentheses has non-vanishing discriminant, therefore $\Delta$ has four distinct roots $[\lambda_1 : \lambda_2]$, giving rise to four further fibers of type $I_1$.
\end{Example}

\begin{proof}[Proof of Theorem~\ref{thm:I0*osszeseset}]
The combination of the examples given in this section provide the proof of the theorem.
\end{proof}

\subsection*{Acknowledgments}
The authors acknowledge support of NKFIH through the \emph{\'Elvonal (Frontier)} program, with grant KKP126683. The third author was also supported by the grant NKFIH 120697.
The authors would like to thank the anonymous referees for many helpful comments, suggestions and corrections.

\pdfbookmark[1]{References}{ref}
\LastPageEnding


\begin{thebibliography}{99}
\footnotesize\itemsep=0pt

\bibitem{AMM}
Alekseev A., Malkin A., Meinrenken E., Lie group valued moment maps,
 \href{https://doi.org/10.4310/jdg/1214460860}{\textit{J.~Differential Geom.}} \textbf{48} (1998), 445--495,
 \href{https://arxiv.org/abs/dg-ga/9707021}{arXiv:dg-ga/9707021}.

\bibitem{BB}
Biquard O., Boalch P., Wild non-abelian {H}odge theory on curves,
 \href{https://doi.org/10.1112/S0010437X03000010}{\textit{Compos. Math.}} \textbf{140} (2004), 179--204,
 \href{https://arxiv.org/abs/math.DG/0111098}{arXiv:math.DG/0111098}.

\bibitem{Boalch3}
Boalch P., Hyperkahler manifolds and nonabelian {H}odge theory of (irregular)
 curves, \href{https://arxiv.org/abs/1203.6607}{arXiv:1203.6607}.

\bibitem{Boalch2}
Boalch P., Simply-laced isomonodromy systems, \href{https://doi.org/10.1007/s10240-012-0044-8}{\textit{Publ. Math. Inst. Hautes
 \'{E}tudes Sci.}} \textbf{116} (2012), 1--68, \href{https://arxiv.org/abs/1107.0874}{arXiv:1107.0874}.

\bibitem{Boalch1}
Boalch P., Geometry and braiding of {S}tokes data; fission and wild character
 varieties, \href{https://doi.org/10.4007/annals.2014.179.1.5}{\textit{Ann. of Math.}} \textbf{179} (2014), 301--365,
 \href{https://arxiv.org/abs/1111.6228}{arXiv:1111.6228}.

\bibitem{BY}
Boalch P., Yamakawa D., Twisted wild character varieties, \href{https://arxiv.org/abs/1512.08091}{arXiv:1512.08091}.

\bibitem{CB}
Crawley-Boevey W., On matrices in prescribed conjugacy classes with no common
 invariant subspace and sum zero, \href{https://doi.org/10.1215/S0012-7094-03-11825-6}{\textit{Duke Math.~J.}} \textbf{118} (2003),
 339--352, \href{https://arxiv.org/abs/math.RA/0103101}{arXiv:math.RA/0103101}.

\bibitem{dCHM}
de~Cataldo M.A.A., Hausel T., Migliorini L., Topology of {H}itchin systems and
 {H}odge theory of character varieties: the case {$A_1$}, \href{https://doi.org/10.4007/annals.2012.175.3.7}{\textit{Ann. of
 Math.}} \textbf{175} (2012), 1329--1407, \href{https://arxiv.org/abs/1004.1420}{arXiv:1004.1420}.

\bibitem{FrMr}
Friedman R., Morgan J.W., Smooth four-manifolds and complex surfaces,
 \textit{Ergebnisse der Mathematik und ihrer Grenzgebiete~(3)}, Vol.~27,
 \href{https://doi.org/10.1007/978-3-662-03028-8}{Springer-Verlag}, Berlin, 1994.

\bibitem{TwoSing1}
Gong C., Lu J., Tan S.-L., On families of complex curves over {$\mathbb{P}^1$}
 with two singular fibers, \textit{Osaka~J. Math.} \textbf{53} (2016), 83--99.

\bibitem{Hit_selfduality}
Hitchin N.J., The self-duality equations on a {R}iemann surface, \href{https://doi.org/10.1112/plms/s3-55.1.59}{\textit{Proc.
 London Math. Soc.}} \textbf{55} (1987), 59--126.

\bibitem{Hit_Teich}
Hitchin N.J., Lie groups and {T}eichm\"{u}ller space, \href{https://doi.org/10.1016/0040-9383(92)90044-I}{\textit{Topology}}
 \textbf{31} (1992), 449--473.

\bibitem{ISS}
Ivanics P., Stipsicz A., Szab\'{o} S., Two-dimensional moduli spaces of rank~2
 {H}iggs bundles over {$\mathbb{C}P^1$} with one irregular singular point,
 \href{https://doi.org/10.1016/j.geomphys.2018.04.004}{\textit{J.~Geom. Phys.}} \textbf{130} (2018), 184--212, \href{https://arxiv.org/abs/1604.08503}{arXiv:1604.08503}.

\bibitem{ISS2}
Ivanics P., Stipsicz A., Szab\'{o} S., Hitchin fibrations on moduli of
 irregular {H}iggs bundles and motivic wall-crossing, \href{https://doi.org/10.1016/j.jpaa.2018.12.016}{\textit{J.~Pure Appl.
 Algebra}} \textbf{223} (2019), 3989--4064, \href{https://arxiv.org/abs/1710.09922}{arXiv:1710.09922}.

\bibitem{Kodaira}
Kodaira K., On compact analytic surfaces.~{II}, \href{https://doi.org/10.2307/1970131}{\textit{Ann. of Math.}}
 \textbf{77} (1963), 563--626.

\bibitem{KraftProcesi}
Kraft H., Procesi C., Closures of conjugacy classes of matrices are normal,
 \href{https://doi.org/10.1007/BF01389764}{\textit{Invent. Math.}} \textbf{53} (1979), 227--247.

\bibitem{KN}
Kronheimer P.B., Nakajima H., Yang--{M}ills instantons on {ALE} gravitational
 instantons, \href{https://doi.org/10.1007/BF01444534}{\textit{Math. Ann.}} \textbf{288} (1990), 263--307.

\bibitem{Miranda}
Miranda R., Persson's list of singular fibers for a rational elliptic surface,
 \href{https://doi.org/10.1007/BF02571235}{\textit{Math.~Z.}} \textbf{205} (1990), 191--211.

\bibitem{nit}
Nitsure N., Moduli space of semistable pairs on a curve, \href{https://doi.org/10.1112/plms/s3-62.2.275}{\textit{Proc. London
 Math. Soc.}} \textbf{62} (1991), 275--300.

\bibitem{Persson}
Persson U., Configurations of {K}odaira fibers on rational elliptic surfaces,
 \href{https://doi.org/10.1007/BF02571223}{\textit{Math.~Z.}} \textbf{205} (1990), 1--47.

\bibitem{Sim}
Simpson C., Harmonic bundles on noncompact curves, \href{https://doi.org/10.2307/1990935}{\textit{J.~Amer. Math.
 Soc.}} \textbf{3} (1990), 713--770.

\bibitem{Sim_boundary}
Simpson C., The dual boundary complex of the {${\rm SL}_2$} character variety of a
 punctured sphere, \href{https://doi.org/10.5802/afst.1496}{\textit{Ann. Fac. Sci. Toulouse Math.}} \textbf{25} (2016),
 317--361, \href{https://arxiv.org/abs/1504.05395}{arXiv:1504.05395}.

\bibitem{SSS}
Stipsicz A.I., Szab\'{o} Z., Szil\'{a}rd A., Singular fibers in elliptic
 fibrations on the rational elliptic surface, \href{https://doi.org/10.1007/s-10998-007-2137-2}{\textit{Period. Math. Hungar.}}
 \textbf{54} (2007), 137--162.

\bibitem{Sz-spectral}
Szab\'{o} S., The birational geometry of unramified irregular {H}iggs bundles
 on curves, \href{https://doi.org/10.1142/S0129167X17500458}{\textit{Internat.~J. Math.}} \textbf{28} (2017), 1750045, 32~pages,
 \href{https://arxiv.org/abs/1502.02003}{arXiv:1502.02003}.

\bibitem{Sz_PW}
Szab\'o S., Perversity equals weight for {P}ainlev\'e spaces,
 \href{https://arxiv.org/abs/1802.03798}{arXiv:1802.03798}.

\bibitem{TwoSing3}
Tan S.-L., Chern numbers of a singular fiber, modular invariants and isotrivial
 families of curves, \textit{Acta Math. Vietnam.} \textbf{35} (2010),
 159--172.

\end{thebibliography}
\end{document}